\pgfplotsset{compat=1.14}
\pgfplotsset{
    cycle list/Set1,
}
\newcommand{\bl}[1]{\textcolor{black}{#1}}
\newcommand{\Fcal}{\ensuremath{\mathcal{F}}}
\newcommand{\Gcal}{\ensuremath{\mathcal{G}}}
\newcommand{\Hcal}{\ensuremath{\mathcal{H}}}
\newcommand{\Jcal}{\ensuremath{\mathcal{J}}}
\newcommand{\Lcal}{\ensuremath{\mathcal{L}}}
\newcommand{\Mcal}{\ensuremath{\mathcal{M}}}
\newcommand{\Pcal}{\ensuremath{\mathcal{P}}}
\newcommand{\Rcal}{\ensuremath{\mathcal{R}}}
\newcommand{\Scal}{\ensuremath{\mathcal{S}}}
\newcommand{\Tcal}{\ensuremath{\mathcal{T}}}
\newcommand{\Ucal}{\ensuremath{\mathcal{U}}}
\newcommand{\Vcal}{\ensuremath{\mathcal{V}}}
\newcommand{\Xcal}{\ensuremath{\mathcal{X}}}
\newcommand{\Zcal}{\ensuremath{\mathcal{Z}}}
\newcommand{\Tcalt}{\ensuremath{{\Tcal_{\tau}}}}
\newcommand{\Ucalt}{\ensuremath{{\Ucal_{\tau}}}}
\newcommand{\Zcalt}{\ensuremath{{\Zcal_{\tau}}}}
\newcommand{\norm}[1]{\|{#1}\|}
\newcommand{\Vd}[1]{\ensuremath{\mathcal{V}_{#1}}}
\newcommand{\J}[1]{\ensuremath{J_{#1}}}
\newcommand{\Ham}{\ensuremath{\Hcal}}
\newcommand{\HamN}{\ensuremath{\Hcal}}
\newcommand{\HamU}{\ensuremath{\Hcal_{U}}}
\newcommand{\R}[2]{\ensuremath{\mathbb{R}^{{#1}\times{#2}}}}
\renewcommand{\r}[1]{\ensuremath{\mathbb{R}^{#1}}}
\newcommand{\rank}[1]{\ensuremath{\mathrm{rank}({#1})}}
\renewcommand{\ker}[1]{\ensuremath{\mathrm{ker}({#1})}}
\newcommand{\res}{\ensuremath{\rho}}
\newcommand{\tm}{\ensuremath{{\tau-1}}}
\newcommand{\Nf}{\ensuremath{{2N}}} % size of full model
\newcommand{\Nr}{\ensuremath{{2n}}} % size of reduced model
\newcommand{\Nrt}{\ensuremath{{2n_{\tau}}}} % size of reduced model
\newcommand{\Nrmt}{\ensuremath{{2n_{\tau}{-}2}}}
\newcommand{\Nfh}{\ensuremath{N}} % half the size of full model
\newcommand{\Nrh}{\ensuremath{n}} % half the size of reduced model
\newcommand{\Nrht}{\ensuremath{n_{\tau}}} % half the size of reduced model
\newcommand{\Np}{\ensuremath{p}} % size of the parameter space
\newcommand{\Ns}{\ensuremath{{s}}} % number of stages of RK algorithm
\newcommand{\Sreg}{\ensuremath{S_{\varepsilon}}}
\newcommand{\reg}{\ensuremath{\varepsilon}}
\newcommand{\dt}{\ensuremath{\Delta t\,}} % Time step
\newcommand{\freqE}{\ensuremath{N_{\mathbf{E}}}}
\newcommand{\T}[2]{\ensuremath{T_{#1}#2}}
\newcommand{\Idm}{\ensuremath{I}}
\newcommand{\prm}{\ensuremath{\eta}} % parameter
\newcommand{\prmh}{\ensuremath{\eta_h}} % parameter
\newcommand{\prmhsub}{\ensuremath{\widetilde{\eta}_h}} % subset of parameter
\newcommand{\Sprm}{\ensuremath{\Gamma}} % parameter space
\newcommand{\Sprmh}{\ensuremath{\Sprm_h}} % parameter space
\newcommand{\cay}{\ensuremath{\mathrm{cay}}}
\numberwithin{equation}{section}
\theoremstyle{plain}
  \newtheorem{theorem}{\sffamily Theorem}[section]
  \Crefname{proposition}{Proposition}{Propositions}
  \newtheorem{lemma}[theorem]{\sffamily Lemma}\Crefname{lemma}{Lemma}{Lemmas}
  \theoremstyle{definition}
  \newtheorem{remark}[theorem]{\sffamily Remark}\Crefname{remark}{Remark}{Remarks}
  \newtheorem{definition}[theorem]{\sffamily Definition}
  \newtheorem*{acknowledgment*}{Acknowledgment}
\def\tikz@auto@anchor{%
    \pgfmathtruncatemacro\angle{atan2(\pgf@x,\pgf@y)-90}
    \edef\tikz@anchor{\angle}%
}
\newcommand*\bigcdot{\mathpalette\bigcdot@{.5}}
\newcommand*\bigcdot@[2]{\mathbin{\vcenter{\hbox{\scalebox{#2}{$\m@th#1\bullet$}}}}}
\title{Rank-adaptive structure-preserving model order reduction of Hamiltonian systems}
\author[]{%
Jan S. Hesthaven\thanks{Chair of Computational Mathematics and Simulation Science (MCSS),
			  %EPFL-SB-MATH-MCSS,
			  \'Ecole Polytechnique F\'ed\'erale de Lausanne (EPFL),
			  CH-1015 Lausanne, Switzerland.
Email: \texttt{Jan.Hesthaven@epfl.ch}}\;,
Cecilia Pagliantini\thanks{\textit{Corresponding author}. Centre for Analysis, Scientific computing and Applications,
			  Department of Mathematics and Computer Science,
			  Eindhoven University of Technology (TU/e),
			  5600 MB Eindhoven, The Netherlands.
Email: \texttt{c.pagliantini@tue.nl}}\;, and
Nicol\`o Ripamonti\thanks{Chair of Computational Mathematics and Simulation Science (MCSS),
			  %EPFL-SB-MATH-MCSS,
			  \'Ecole Polytechnique F\'ed\'erale de Lausanne (EPFL),
			  CH-1015 Lausanne, Switzerland.
Email: \texttt{nicolo.ripamonti@epfl.ch}}
}
\date{}
\begin{document}

\maketitle

\begin{abstract}
This work proposes an adaptive structure-preserving model order reduction method
%an online adaptive reduced basis method
for finite-dimensional parametrized Hamiltonian systems modeling
%transport problems and
non-dissipative phenomena.
To overcome the slowly decaying Kolmogorov width
typical of transport problems,
the full model is approximated on
local reduced spaces that are adapted in time
%during time evolution
using dynamical low-rank approximation techniques.
The reduced dynamics is prescribed by approximating the symplectic projection of the Hamiltonian vector field
in the tangent space to the local reduced space. This ensures that the canonical symplectic structure
of the Hamiltonian dynamics is preserved during the reduction.
In addition, accurate approximations with low-rank reduced solutions are obtained by allowing
the dimension of the reduced space to change during the time evolution.
%In each time interval associated with the temporal discretization,
Whenever the quality of the reduced solution, assessed via an error indicator, is not satisfactory,
the reduced basis is augmented in the parameter direction that is worst approximated
by the current basis. 
Extensive numerical tests involving wave interactions, nonlinear transport problems, and the Vlasov equation demonstrate the superior stability properties
%, structure-preservation
and considerable runtime speedups of the proposed method
as compared to global and traditional reduced basis approaches.
\end{abstract}

\noindent
\textbf{MSC 2010.} 37N30, 65P10, 78M34, 37J15.

\noindent
\textbf{Keywords.} Reduced basis methods (RBM), Hamiltonian dynamics, symplectic manifolds, dynamical low-rank approximation, adaptive algorithms.

%\tableofcontents

\section{Introduction}
Hamiltonian systems describe conservative dynamics and non-dissipative phenomena in, for example, classical
mechanics, transport problems, fluids and kinetic models.
We consider finite-dimensional Hamiltonian systems, in canonical symplectic form, that depend on a set of parameters
associated with the geometric configuration of the problem or which represent physical properties of the problem. 
The development of numerical methods for the solution of
parametric Hamiltonian systems in many-query and long-time simulations is challenged by two major factors: the high computational cost
required to achieve sufficiently accurate approximations, and
the possible onset of
%unphysical solution behaviors associated with
numerical instabilities
%that might arise when the conservation laws characterizing non-dissipative dynamics are not satisfied.
resulting from failing
%associated with failure
to satisfy the conservation laws underlying non-dissipative dynamics.
%and by the fact that non-dissipative phenomena are characterized by several conservation laws.
%
Model order reduction (MOR) and reduced basis methods (RBM) provide an effective procedure
to reduce the computational cost of such simulations by replacing the original high-dimensional problem with
models of reduced dimensionality without compromising the accuracy of the approximation.
The success of RBM relies on the assumption that the problem possesses a low-rank nature, i.e. that the set of solutions, obtained as time and parameters vary, is of low dimension. However, non-dissipative phenomena
%like advection and wave-type problems,
do not generally exhibit such global low-rank structure and are characterized by slowly decaying Kolmogorov $n$-widths.
This implies that traditional reduced models derived via linear approximations are generally not effective.

In recent years, there has been a growing interest in the development of model order reduction techniques for transport-dominated problems to overcome the limitations of
linear global approximations. A large class of methods consists in
constructing nonlinear transformations of the solution manifold and to recast
it in a coordinate framework where it admits a low-rank structure, e.g. \cite{OR13,IL14,W17,RSSM18,ELMV19,CMS19,LC20,T20}.
A second family of MOR techniques focuses on online adaptive methods that update local reduced spaces depending on parameter and time, e.g. %EGG12
\cite{C15,PW15,RPM19}.
To the best of our knowledge, none of the aforementioned methods provides any guarantee on the preservation of the physical properties and the geometric structure of the problem considered, and they might therefore be unsuitable to treat non-dissipative phenomena.

In parametric dynamical systems, the state can be represented, at each time, as a matrix whose columns are the solution vectors associated with different parameter values.
In this perspective, finding a low-dimensional space in which the solution state can be well approximated is strictly related to low-rank matrix approximations.
In a time-dependent setting, dynamical low-rank approximation \cite{KL07} provides a low-rank factorization updating technique to efficiently compute approximations of time-dependent large data matrices.
This approach can be equivalently seen as a reduced basis method based on a modal decomposition of the approximate solution with dynamically
evolving modes.
A geometric perspective on the relation between dynamical low-rank approximation and model order reduction in the context of time-dependent matrices has been proposed in \cite{FL18}.
%\cp{We can maybe add here consideration on MOR vs. LR approximation.}
To the best of our knowledge the only dynamical low-rank approximation methods able to preserve the geometric structure of Hamiltonian dynamics
were proposed in \cite{MN17} to deal with the spatial approximation of the stochastic wave equation and in \cite{P19} to deal with finite-dimensional Hamiltonian systems.
The gist of these methods is to approximate the full model solution in a low-dimensional manifold that evolves in time and possesses the symplectic structure of the full phase-space. The reduced dynamics is then derived via a symplectic projection of the Hamiltonian vector field onto the tangent space of the reduced symplectic manifold at each reduced state.
%\bl{Although this approach allows to construct a reduced dynamical system which is also Hamiltonian,

Their success notwithstanding, traditional dynamical low-rank approximation techniques are
based on a reduced (low-rank) space whose dimension
is fixed at the beginning of the evolution. This is a major limitation since
it frequently happens that the rank of the initial condition does not correctly
reflect the effective rank of the solution at all times.
Consider, as an example, a linear advection problem in 1D, where the parameter represents the transport velocity.
%The solution is represented by a matrix whose columns are the solution vectors associated with different parameter values.
It is clear that, if the initial condition does not depend on the parameter, its rank is equal to one. However, as the initial condition is advected in time with different velocities, its rank rapidly increases.
Approximating such dynamics with a time-dependent sequence of reduced manifolds
of rank-1 matrices yields poor approximations. Conversely, an overapproximation of the initial condition, and possibly of the solution at other times, could improve the accuracy but will inevitably yield situations of rank-deficiency, as observed in \cite[Section 5.3]{KL07}.
This example demonstrates that, in a dynamical reduced basis approach, it is crucial to accurately capture
the rank of the full model solution at each time.
This issue has, however, received little attention so far \cite{SL12,CHZ13}.
In this work, we propose a novel dynamical low-rank approximation scheme for the solution of parametric Hamiltonian systems that combines adaptivity in the rank of the solution with preservation of the Hamiltonian structure of the dynamics.
%In this work, we propose a novel adaptive dynamical \bl{low-rank approximation} scheme where, not only the reduced space is evolving, but also its dimension may change over time.

The proposed rank-adaptive algorithm can be summarized as follows.
\begin{itemize}
\item Given a fixed partition of the temporal domain, we consider, in each temporal subinterval,
the discretized reduced dynamical system obtained with the structure-preserving approach of \cite{P19}.
While in \cite{P19} the rank of the approximate solution is fixed a priori, here we \emph{change the rank adaptively} from one temporal interval to the next one.
%We consider the structure-preserving temporal discretization of the reduced dynamics introduced in \cite{P19}.
\item To this aim, a surrogate error based on a linearization of the problem residual is computed at chosen times and for all tested parameters.
If the error indicator reveals, according to a specific criterion, that the current reduced space is too small to approximate the state, we augment it
in the direction that is worst approximated by the current reduced basis. The reduced dynamical system is then evolved, in the subsequent temporal interval, in the augmented manifold.
%The reduced dynamical system is then evolved in the augmented manifold.
In case of overapproximation, the size of the reduced space is, instead, decreased.
\item Two major difficulties are associated with this approach: (i) to maintain the \emph{global} Hamiltonian structure of the dynamics while modifying the reduced phase space; and
(ii) to evolve the system on the updated space starting from a rank-deficient initial condition.
To address these problems, we devise a regularization of the velocity field of the reduced flow so that the resulting vector belongs to the tangent space of the updated reduced manifold, and the Hamiltonian structure is then preserved.
%\item For the efficient treatment of polynomial nonlinearities of the Hamiltonian gradient, we combine tensorial and splitting techniques with coarsening strategies.
\end{itemize}

% Summary
The remainder of the paper is organized as follows.
In \Cref{sec:pbm}, we introduce parametrized Hamiltonian systems and describe their symplectic structure.
In \Cref{sec:DLR}, we derive the evolution equations of the
reduced Hamiltonian dynamics.
The problem of overapproximation and rank-deficiency is discussed in \Cref{sec:evol-rank-deficient}, where the regularization algorithm is introduced.
\Cref{sec:pRK} deals with the numerical temporal integration of the reduced dynamics: first, we summarize the structure-preserving methods introduced in \cite{P19} for the evolution of the reduced basis,
and then we design novel partitioned RK schemes that are accurate
with order 2 and 3 and preserve the geometric structure of the evolution problem.
\Cref{sec:rank-adaptivity} pertains to the rank-adaptive algorithm. We describe the major steps: computation of the error indicator, criterion for the rank update, and update of the reduced state.
The computational complexity of the adaptive dynamical reduced basis algorithm is thoroughly analyzed in \Cref{sec:cost}.
Furthermore, an approach that combines tensorial and splitting techniques with coarsening strategies is proposed to efficiently deal with polynomial nonlinearities of the Hamiltonian gradient.
\Cref{sec:numerical_tests} is devoted to extensive numerical simulations
of the proposed algorithm and its numerical comparisons with non-adaptive and global reduced basis methods. Finally, \Cref{sec:conclusions} concludes with a few remarks.

%%%%%%%%%%%%%%%%%%%%%%%%%%%%%%%%%%%%%%%%%%%%%%%%%%%%%%%%%%%%%%%%%%%%%%%%%%%
\section{Problem formulation}\label{sec:pbm}
Let $\Tcal:=(t_0,T]$ be a temporal interval and let
$\Sprm\subset \mathbb{R}^d$, with $d\geq 1$,
be a compact set of parameters.
For each $\prm\in\Sprm$,
we consider the
Hamiltonian system described by the
initial value problem:
For $u_0(\prm)\in\Vd{\Nf}$, find $u(\cdot,\prm)\in C^1(\Tcal,\Vd{\Nf})$ such that
\begin{equation}\label{eq:HamSystem}
	\left\{
	\begin{array}{ll}
%		\dot{u}(t;\prm) = \Xcal_{\HamN}(u(t;\prm);\prm), & \quad \quad\mbox{for }\;t\in\Tcal,\\
		\dot{u}(t;\prm) = \J{\Nf}\nabla_u\Ham(u(t;\prm);\prm), & \quad \quad\mbox{for }\;t\in\Tcal,\\
		u(t_0;\prm) = u_0(\prm),&
	\end{array}\right.	
\end{equation}
where the dot denotes the derivative with respect to time $t$, $\Vd{\Nf}$ is a $\Nf$-dimensional vector space,
and $C^1(\Tcal,\Vd{\Nf})$ denotes continuous differentiable functions in time taking values in $\Vd{\Nf}$.
Moreover, the function $\Ham:\Vd{\Nf}\times\Sprm\rightarrow\r{}$ is the Hamiltonian of the system,
$\nabla_u$ is the gradient with respect to the state variable $u$,
and $\J{\Nf}$ is the so-called canonical symplectic tensor defined as
\begin{equation}\label{eq:J}
\J{\Nf} :=
	\begin{pmatrix}
		 0_{\Nfh} & \Idm_{\Nfh} \\
		-\Idm_{\Nfh} & 0_{\Nfh} \\
	\end{pmatrix}\in\R{\Nf}{\Nf},
\end{equation}
with $\Idm_{\Nfh}, 0_{\Nfh} \in\R{\Nfh}{\Nfh}$ denoting the identity and zero matrices, respectively.
%In this work we focus on evolution problems that can be expressed as Hamiltonian systems in canonical form.
The operator $\J{\Nf}$ identifies
a symplectic structure on the phase-space of the Hamiltonian system
\eqref{eq:HamSystem}.
Equivalently, the vector space $\Vd{\Nf}$ admits a
global basis that is symplectic and orthonormal according to the following definition.
%
\begin{comment}
\begin{definition}[Symplectic structure]
Let $\Vd{\Nf}$ be a $\Nf$-dimensional vector space over $\r{}$.
A \emph{symplectic structure} on $\Vd{\Nf}$
is a skew-symmetric, non-degenerate bilinear form $\omega:\Vd{\Nf}\times\Vd{\Nf}\rightarrow\r{}$, namely
\begin{equation*}
    \omega(v_1,v_2) = -\omega(v_2,v_1),
    \qquad
    \omega(v_1,v_2) = 0,\;\forall v_2\in\Vd{\Nf}\;
    \Rightarrow\; v_1=0.
\end{equation*}
A vector space $\Vd{\Nf}$ endowed with a symplectic structure $\omega$
is called a symplectic vector space, denoted as $(\Vd{\Nf},\omega)$.
\end{definition}
%
A result by Darboux \cite{Darb82}
%see also \cite[Theorem 3.2.2]{AbMa78},
ensures that,
on the symplectic vector space $(\Vd{\Nf},\omega)$,
there exist local coordinates, called \emph{canonical coordinates},
in which the symplectic form $\omega$ has the canonical form, namely
$\omega(v_1,v_2) = v_1^\top \J{\Nf} v_2$, for all $v_1, v_2\in\Vd{\Nf}$,
where $\J{\Nf}$ is the \emph{Poisson tensor}, defined as
%in the canonical symplectic form as
\begin{equation}\label{eq:J}
\J{\Nf} :=
	\begin{pmatrix}
		 0_{\Nfh} & \Idm_{\Nfh} \\
		-\Idm_{\Nfh} & 0_{\Nfh} \\
	\end{pmatrix}\in\R{\Nf}{\Nf},
\end{equation}
with $\Idm_{\Nfh}, 0_{\Nfh} \in\R{\Nfh}{\Nfh}$ denoting the identity and zero matrices, respectively.
%
Canonical coordinates on a symplectic vector space allow to define a global
basis that is symplectic and orthonormal.
\end{comment}
%
\begin{definition}[Orthosymplectic basis]\label{def:ortsym}
%Let $(\Vd{\Nf},\omega)$ be a $\Nf$-dimensional symplectic vector space. % and let $\omega$ be the canonical symplectic form.
The set of vectors $\{e_i\}_{i=1}^{\Nf}$ is said to be \emph{orthosymplectic} in the $\Nf$-dimensional vector space $\Vd{\Nf}$
if
\begin{equation*}%\label{eq:ors}
	e_i^\top \J{\Nf} e_j=(\J{\Nf})_{i,j}\,,\quad\mbox{ and }\quad (e_i,e_j)=\delta_{i,j}\,,\qquad\forall i,j=1\ldots,\Nf,
\end{equation*}
where $(\cdot,\cdot)$ is the Euclidean inner product and
$\J{\Nf}$ is the canonical symplectic tensor \eqref{eq:J} on $\Vd{\Nf}$.
\end{definition}

\section{Dynamical reduced basis method for Hamiltonian systems}
\label{sec:DLR}
We are interested in solving the Hamiltonian
system \eqref{eq:HamSystem} for a given set of $\Np$ vector-valued parameters $\{\prm_j\}_{j=1}^{\Np}\subset\Sprm$, that,
with a small abuse of notation, we denote $\prmh\in \Sprmh$.
Then, the state variable $u$ in \eqref{eq:HamSystem}
can be thought of as a matrix-valued application
$u(\cdot;\prmh):\Tcal\rightarrow \Vd{\Nf}^{\Np}\subset\R{\Nf}{\Np}$
where $\Vd{\Nf}^{\Np}:=\Vd{\Nf}\times\ldots\times\Vd{\Nf}$.
Throughout, for a given matrix $\Rcal\in\R{\Nf}{\Np}$, we denote with
$\Rcal_{j}\in\r{\Nf}$ the vector corresponding to the $j$-th column of $\Rcal$,
for any $j=1,\ldots,\Np$.
Let $[a_1|a_2|\ldots|a_r]$ denote the matrix of size $\Nf\times (m_1+\ldots + m_r)$
resulting from the horizontal concatenation of the matrices $a_j\in\mathbb{R}^{\Nf\times m_j}$ for $j=1,\ldots,r$.
The Hamiltonian system \eqref{eq:HamSystem},
evaluated at $\prmh$,
can be recast as a set of ordinary differential equations
in a $\Nf\times\Np$ matrix unknown in $\Vd{\Nf}^{\Np}$ as follows.
For $\Rcal_0(\prm_h):=\big[u_0(\prm_1)|\ldots|u_0(\prm_{\Np})\big]\in\Vd{\Nf}^{\Np}$,
find $\Rcal\in C^1(\Tcal,\Vd{\Nf}^{\Np})$ such that
\begin{equation}\label{eq:HamSystemMatrix}
	\left\{
	\begin{array}{ll}
		\dot{\Rcal}(t) = \Xcal_{\HamN}(\Rcal(t),\prm_h)
		:= \J{\Nf}\nabla\HamN(\Rcal(t);\prmh),&\quad\quad\mbox{for }\; t\in\Tcal,\\
		\Rcal(t_0) = \Rcal_0(\prm_h), &
	\end{array}\right.	
\end{equation}
where $\HamN:\Vd{\Nf}^{\Np}\rightarrow\r{\Np}$ and,
for any $\Rcal\in\Vd{\Nf}^{\Np}$, its gradient
$\nabla\HamN(\Rcal;\prmh)\in\Vd{\Nf}^{\Np}$
is defined as $(\nabla\HamN(\Rcal;\prmh))_{i,j}=\frac{\partial \HamN_j}{\partial \Rcal_{i,j}}$,
for any $i=1,\ldots,\Nf$, $j=1,\ldots,\Np$.
The function $\HamN_j$ is the Hamiltonian of 
the dynamical system \eqref{eq:HamSystem} corresponding
to the parameter $\prm_j$, for $j=1,\ldots,\Np$.
We assume that, for a fixed sample of parameters $\prmh\in\Sprmh$,
the vector field $\Xcal_{\HamN}(\cdot;\prmh)\in\Vd{\Nf}^{\Np}$
is Lipschitz continuous in the Frobenius norm $\norm{\cdot}$ uniformly
with respect to time, so that
\eqref{eq:HamSystemMatrix} is well-posed.

Let us consider the splitting of the time domain $\Tcal$ into the
union of intervals $\Tcalt:=(t^{\tau-1},t^{\tau}]$, $\tau=1,\ldots,N_{\tau}$,
with $t^0:=t_0$ and $t^{N_{\tau}}:=T$, and let the local time step be defined as
$\dt_{\tau}=t^{\tau}-t^{\tau-1}$ for every $\tau$.
For the model order reduction of \eqref{eq:HamSystemMatrix}
we propose an adaptive dynamical scheme based on approximating the full model solution in a lower-dimensional space that is evolving, and whose dimension may also change over time. To this aim, we adopt a local perspective by considering, in each temporal interval, an
%We consider a local
approximation of the solution of \eqref{eq:HamSystemMatrix} of the form
\begin{equation}\label{eq:Rrb}
	\Rcal(t)\approx R(t) % = u_{\rb}(t,\prm_h)
		= \sum_{i=1}^{\Nrt} U_i(t) Z_i(t,\prm_h) = U(t)Z(t),
		\qquad\forall\, t\in\Tcalt,
\end{equation}
where $U(t)=\big[U_1|\ldots|U_{\Nrt}\big]\in\R{\Nf}{\Nrt}$, and
%$Z=\big[\Zbf_1^\top|\ldots|\Zbf_{\Nr}^\top\big]\in\R{\Np}{\Nr}$
$Z\in\R{\Nrt}{\Np}$
is such that
$Z_{i,j}(t)=Z_i(t,\prm_j)$ for $i=1,\ldots,\Nrt$, $j=1,\ldots,\Np$,
and any $t\in\Tcalt$. Here $\Nrht\in\mathbb{N}$ satisfies $\Nrt\leq \Np$
and $\Nrht\ll\Nfh$, and is updated over time according to Algorithm~\ref{algo:rank-update} that we will thoroughly discuss in Section~\ref{sec:rank-adaptivity}.
With this notation, we introduce the collection of reduced spaces
of $\Nf\times\Np$ matrices having rank at most $\Nrt$, and characterized as
\begin{equation*}%\label{eq:SRM}
	\Mcal_{\Nrt} := \{R\in\R{\Nf}{\Np}:\; R = UZ\;\mbox{ with }\;
		U\in\Ucalt,\, Z\in \Zcalt \},\qquad\forall\,\tau=1,\ldots,N_{\tau},
\end{equation*}
where $U$ represents the reduced basis and it is taken to be orthogonal and symplectic, while $Z$ are the expansion coefficients in the reduced basis, i.e.
\begin{equation}\label{eq:ManUZ}
\begin{aligned}
	\Ucalt & :=\{U\in\R{\Nf}{\Nrt}:\;U^\top U=\Idm_{\Nrt},\; U^\top \J{\Nf} U = \J{\Nrt}\}, \\
	\Zcalt & :=\{Z\in\R{\Nrt}{\Np}:\;\rank{ZZ^\top + \J{\Nrt}^\top ZZ^\top \J{\Nrt}} = \Nrt\}. 
\end{aligned}
\end{equation}
To approximate the Hamiltonian system \eqref{eq:HamSystemMatrix} in $\Tcalt$ with an
evolution problem on the reduced space $\Mcal_{\Nrt}$
%that can be solved at a lower computational cost,
we need to prescribe
evolution equations for the reduced basis $U(t)\in\Ucalt$ and
the expansion coefficients $Z(t)\in\Zcalt$.
For this, we follow the approach proposed in 
\cite{MN17} and \cite{P19}, and
derive the reduced flow describing the dynamics of the reduced state $R$ in \eqref{eq:Rrb} by applying to the
Hamiltonian vector field $\Xcal_{\HamN}$
the symplectic projection
$\Pi_{\T{R(t)}{\Mcal_{\Nrt}}}$
onto the tangent space of the reduced manifold at the current state.
The resulting local evolution problem reads:
Find $R\in C^1(\Tcalt,\Mcal_{\Nrt})$ such that
\begin{equation}\label{eq:dynn}
	\dot{R}(t) = \Pi_{\T{R}{\Mcal_{\Nrt}}}\Xcal_{\HamN}(R(t),\prm_h),\qquad\quad\mbox{for }\; t\in\Tcalt,
\end{equation}
where we assume, for the time being, that the initial condition
of \eqref{eq:dynn}
at time $t^{\tm}$, $\tau\geq 1$, is given, and we refer to
\Cref{sec:rank-update} for a complete description of how
such an initial condition is prescribed.

By exploiting the characterization of the projection operator
$\Pi_{\T{R(t)}{\Mcal_{\Nrt}}}$ in \cite[Proposition 4.2]{P19},
we obtain the local evolution equations for the factors $U$ and
$Z$ in the modal decomposition of the reduced solution \eqref{eq:Rrb},
as in \cite[Proposition 6.9]{MN17} and \cite[Equation (4.10)]{P19}.
In more details, for any $\tau\geq 1$, given
$(U(t^{\tm}),Z(t^{\tm}))\in \Ucalt\times \Zcalt$ we seek
$(U,Z)\in C^1(\Tcalt,\Ucalt)\times C^1(\Tcalt,\Zcalt)$ such that
\begin{subequations}\label{eq:UZred}
\begin{empheq}[left = \empheqlbrace\,]{align}
	& \dot{Z}(t) = \J{\Nr}\nabla_Z \HamU(Z,\prmh), &\mbox{for }\; t\in\Tcalt,\label{eq:Z}\\
	& \dot{U}(t) = (\Idm_{\Nf}-UU^\top)(\J{\Nf}YZ^\top -
	YZ^\top\J{\Nrt}^\top)
	(ZZ^\top+\J{\Nrt}^\top ZZ^\top\J{\Nrt})^{-1}, &\mbox{for }\; t\in\Tcalt,
	\label{eq:U}
\end{empheq}
\end{subequations}
where $Y(t):=\nabla\HamN(R(t);\prmh)\in\Vd{\Nrt}^{\Np}$,
and $R(t)=U(t)Z(t)$ for all $t\in\Tcalt$.
Observe that the local expansion coefficients $Z\in\Zcalt$ satisfy
a Hamiltonian system \eqref{eq:Z}
of reduced dimension $\Nrt$, where the
reduced Hamiltonian is defined as
$\HamU(Z;\prmh):=\HamN(UZ;\prmh)$.
%and its gradient
%$\nabla\HamU(Z;\prmh)\in T_{Z}\Zcalt$
%is $(\nabla\HamU(Z;\prmh))_{i,j}=\frac{\partial \HamN_j}{\partial Z_{i,j}}(UZ;\prmh)$,
%for any $i=1,\ldots,\Nrt$, $j=1,\ldots,\Np$.

To compute the initial condition of the reduced problem
at time $t_0$ we perform the
complex SVD \cite[Section 4.2]{peng2016symplectic}
of $\Rcal_0(\prm_h)\in\R{\Nf}{\Np}$ in \eqref{eq:HamSystemMatrix}, truncated at the $\Nrh_1$-th mode. Then, the
initial reduced basis $U_0\in\Ucal_1$ can be derived from the unitary matrix of left singular vectors of 
$\Rcal_0(\prm_h)$, via the isomorphism between $\Ucal_1$ and
the Stiefel manifold of unitary $\Nfh\times\Nrh_1$ complex matrices, \emph{cf.} \cite[Lemma 6.1]{MN17}.
The expansion coefficients matrix is initialized as
$Z_0 = U_0^\top \Rcal_0(\prm_h)$.

%%%%%%%%%%%%%%%%%%%%%%%%%%%%%%%%%%%%%%%%%%%%%%%%%%%%%%%%%%%%%%%%%%%%%%%%%%%%%%%%%%%%%%%
\section{Partitioned Runge--Kutta methods}\label{sec:pRK}
%\cp{I have not managed to simplify this part more than this. The alternative is to remove the analytic expression of $f_{\tau}$.\\}
For the numerical time integration of the reduced dynamical system \eqref{eq:dynn} we rely on partitioned Runge--Kutta (RK) methods.
Partitioned RK methods were originally introduced to deal with stiff evolution problems by splitting the dynamics
into a stiff and a nonstiff part so that the two subsystems could be treated with
different temporal integrators. There are many other situations where a dynamical system
possesses a natural partitioning, for example Hamiltonian or singularly perturbed problems,
or nonlinear systems with a linear part.
In our setting, the factorization of the reduced solution \eqref{eq:Rrb}
into the basis $U$ and the coefficients $Z$ provides the natural splitting
expressed in \eqref{eq:UZred}.

In this section we first consider structure-preserving numerical approximations
of the evolution problems \eqref{eq:U} and \eqref{eq:Z}, treated separately.
%by recalling the methods proposed in \cite{P19}.
Then, for the numerical integration of the coupled system \eqref{eq:UZred}, we design partitioned RK schemes
that are accurate with order $2$ and $3$ and preserve the geometric structure of
each evolution problem.

%For the latter, we heavily rely on the fact that $\Fcal(U,Z,\prmh)\in \T{}{\Ucal}$ if $U\in \Ucal$ and $Z\in \Zcal$.
Since the evolution equation \eqref{eq:Z} is a Hamiltonian system (of reduced dimension) we can rely on
symplectic methods for its temporal approximation,
so that the symplectic properties of the flow are preserved at the discrete level, \emph{cf.} e.g. \cite{HaLuWa06}.
%For the temporal approximation of \eqref{eq:Z} for $Z$, we rely on symplectic methods, \emph{cf.} e.g. \cite{HaLuWa06}.
The evolution equation \eqref{eq:U} for the reduced basis
is approximated using tangent methods
analogous to the ones proposed in \cite{P19}, and that
we briefly summarize here.
The idea of tangent methods for the solution of differential equations on manifolds, as introduced in \cite{CO02},
is to recast the local dynamics on the tangent space of the manifold, which is a linear space. The temporal approximation of \eqref{eq:U} by
tangent methods allow to obtain, at a computational cost linear in $\Nfh$, a discrete reduced basis that is orthogonal and symplectic.
%We refer to \cite[Section 5.3]{P19} for further details.
Let
$\Fcal(\cdot,\cdot;\prmh):\R{\Nf}{\Nrt}\times \Zcalt\rightarrow\R{\Nf}{\Nrt}$
denote the velocity field of the evolution \eqref{eq:U} of the reduced basis, namely
\begin{equation}\label{eq:Fcal}
     \Fcal(U,Z;\prmh) := (\Idm_{\Nf}-UU^\top)(\J{\Nf}YZ^{\top} - YZ^{\top}\J{\Nrt}^\top)S^{-1},
     \qquad\forall\, U\in\R{\Nf}{\Nrt},\, Z\in\Zcalt.
\end{equation}
It can be easily shown that, for any $Q\in\Ucalt$, $\Fcal(Q,Z;\prmh)$ belongs to the space
\begin{equation}\label{eq:HU}
H_Q:=\{X\in\R{\Nf}{\Nrt}:\; X^\top Q=0,\,X\J{\Nrt}=\J{\Nf}X\}.
\end{equation}
This is a subspace of the
tangent space of the manifold $\Ucalt$ of orthosymplectic $\Nf\times\Nrt$
matrices at the point $Q\in\Ucalt$.
% can be characterized as $\TsUt{Q}:=\{V\in\R{\Nf}{\Nrt}:\;Q^\top V\in \fr{so}(\Nrt),\;V\J{\Nrt}=\J{\Nf}V\}$, where $\fr{so}(\Nrt)$ denotes the group of skew-symmetric real $\Nrt\times\Nrt$ matrices.
%%To construct a numerical temporal integrator for \eqref{eq:U}, the idea is that,
Let us assume to know, in each temporal interval $\Tcalt$,
the approximate
solution $Q:=U_{\tau-1}\in\Ucal_{\tau}$ of $U(t^{\tau-1})$.
Then, any element of $\Ucalt$, in a neighborhood of $Q$,
can be expressed as the image of a vector $V\in H_Q$
via the retraction
%, namely a local smooth map,
\begin{equation}\label{eq:retraction}
\begin{aligned}
    \Rcal_Q:H_Q & \longrightarrow \Ucalt\\
	V & \longmapsto \cay(VQ^\top-Q V^\top) Q,
	\end{aligned}
\end{equation}
where $\cay$ is the Cayley transform,
defined as $\cay(M)=(I_{\Nfh}-M/2)^{-1}(I_{\Nfh}+M/2)$ for any skew-symmetric and Hamiltonian square matrix $M\in\R{\Nf}{\Nf}$.
%and $2\Theta_Q(V):=(2\Idm_{\Nf}-QQ^\top) V\in\R{\Nf}{\Nrt}$.
%We refer to \cite[Section 5.3]{P19} for further details on the derivation of the map \eqref{eq:retraction}.
%
Since $\Rcal_Q$ is a retraction, rather than
solving \eqref{eq:U} for $U$, one can
derive the local behavior of $U$ in a neighborhood of $Q$ by
evolving $V(t)$, with $U(t)=\Rcal_Q(V(t))$, in the \bl{space $H_Q$}.
By computing the local inverse of the tangent map of the retraction $\Rcal_Q$, the evolution problem for the vector $V$ reads: for any $t\in\Tcalt$,
\begin{equation}\label{eq:EvolTM}
    \dot{V}(t) = f_{\tau}(V(t),Z(t);\prmh) := -Q(\Rcal_Q(V)^{\top} Q+\Idm_{\Nrt})^{-1}(\Rcal_Q(V)+Q)^{\top} \Phi+ \Phi - Q\Phi^\top Q,
\end{equation}
where
$\Phi:=\big(2\Fcal(\Rcal_Q(V),Z;\prmh)
- (\bl{VQ^\top-QV^\top})\Fcal(\Rcal_Q(V),Z;\prmh)\big)
(Q^{\top}\Rcal_Q(V)+\Idm_{\Nrt})^{-1}$.
%and $\Fcal$ is defined in \eqref{eq:Fcal}, \emph{cf.} \cite[Section 5.3.1]{P19}.
\bl{We refer to \cite[Section 5.3.1]{P19} for further details on the derivation of the function $f_{\tau}$.}

The resulting set of evolution equations describes the reduced dynamics in each temporal interval $\Tcalt$ as: given $(U_{\tau-1},Z_{\tau-1})\in\Ucalt\times\Zcalt$,
find $Z(t)\in\Zcalt$ and $V(t)\in\bl{H_{U_{\tau-1}}}$
such that $U(t)=\Rcal_{U_{\tau-1}}(V(t))$ for all $t\in\Tcalt$ and
\begin{equation}\label{eq:PRK-ZV}
\left\{
\begin{array}{ll}
	\dot{Z}(t) = \Gcal(\Rcal_{U_{\tau-1}}(V(t)),Z(t);\prmh), &\quad\quad\mbox{for }\;t\in\Tcalt,\\
	\dot{V}(t) = f_{\tau}(V(t),Z(t);\prmh), &\quad\quad\mbox{for }\;t\in\Tcalt,\\
	V(t^{\tau-1}) = 0\in\bl{H_{U_{\tau-1}}}, &\\
	Z(t^{\tau-1}) = Z_{\tau-1}\in\Zcalt, &
\end{array}
\right.
\end{equation}
where $\Gcal:=\J{\Nr}\nabla \HamU(Z,\prmh)$ from \eqref{eq:Z}
and $f_{\tau}$ is defined in \eqref{eq:EvolTM}.

For the numerical approximation of \eqref{eq:PRK-ZV},
we \bl{derive} partitioned Runge--Kutta methods.
Let $P_Z=(\{b_i\}_{i=1}^{\Ns},\{a_{ij}\}_{i,j=1}^{\Ns})$
be the collection of coefficients of the Butcher tableau describing an
$\Ns$-stage \emph{symplectic} RK method, and
let $\widehat{P}_U=(\{\widehat{b}_i\}_{i=1}^{\Ns},\{\widehat{a}_{ij}\}_{1\leq j<i\leq \Ns})$
be the set of coefficients of an
$\Ns$-stage \emph{explicit} RK method.
Then, the numerical approximation of \eqref{eq:PRK-ZV} via partitioned RK integrators reads
\begin{equation}\label{eq:PRK-ZVh}
\begin{array}{ll}
	& Z_{\tau} = Z_{\tau-1}+\dt\sum\limits_{i=1}^\Ns b_i k_i, \qquad V_{\tau} = \dt\sum\limits_{i=1}^\Ns \widehat{b}_i \widehat{k}_i, \\[0.5em]
	& \qquad k_1 = \Gcal(U_{\tau-1},
	    Z_{\tau-1} + \dt\sum\limits_{j=1}^{\Ns} a_{1,j}k_j;\prmh),
	    \qquad \widehat{k}_1 = \Fcal(U_{\tau-1},
	    Z_{\tau-1} + \dt\sum\limits_{j=1}^{\Ns} a_{1,j}k_j;\prmh), \\
	& \qquad k_i = \Gcal\bigg( \Rcal_{U_{\tau-1}}\big(\dt\sum\limits_{j=1}^{i-1} \widehat{a}_{i,j}\widehat{k}_j\big),
	    Z_{\tau-1} + \dt\sum\limits_{j=1}^{\Ns} a_{i,j}k_j;\prmh\bigg),   \qquad i=2,\ldots,\Ns,\\
	& \qquad \widehat{k}_i = f_{\tau}\bigg(\dt\sum\limits_{j=1}^{i-1} \widehat{a}_{i,j}\widehat{k}_j,
	    Z_{\tau-1} + \dt\sum\limits_{j=1}^{\Ns} a_{i,j}k_j;\prmh\bigg)\bl{,} \qquad\qquad\quad\quad\!\!\! i=2,\ldots,\Ns,\\
	& U_{\tau} = \Rcal_{U_{\tau-1}}(V_{\tau}). 
\end{array}
\end{equation}
%To guarantee the symplecticity of the discrete flow for $Z$ and achieve a sufficient order of accuracy for the coupled discrete system \eqref{eq:PRK-ZVh}, we aim at devising suitable partitioned Runge--Kutta methods.
Runge--Kutta methods of order 2 and 3 with the aforementioned properties can be characterized in terms of the coefficients
$P_Z$ and $\widehat{P}_U$ as in the following result.

\begin{lemma}\label{lemma:pRKcond}
Consider the numerical approximation of \eqref{eq:PRK-ZV}
with the $\Ns$-stage partitioned Runge--Kutta method \eqref{eq:PRK-ZVh}
obtained by coupling
the Runge--Kutta methods
$P_Z=(\{b_i\}_{i=1}^{\Ns},\{a_{ij}\}_{i,j=1}^{\Ns})$
and
$\widehat{P}_U=(\{\widehat{b}_i\}_{i=1}^{\Ns},\{\widehat{a}_{ij}\}_{1\leq j<i\leq \Ns})$.
Then, the following statements hold.
\begin{itemize}
    \item \emph{Symplectic condition} \cite[Theorem VI.4.3]{HaLuWa06}. The Runge--Kutta method $P_Z$ is symplectic if
    \begin{equation}\label{eq:symCond}
        b_ia_{ij}+b_ja_{ji}=b_ib_j,\qquad \forall\;i,j=1,\ldots,\Ns.
    \end{equation}
    \item \emph{Order condition} \cite[Theorem II.2.13]{HNW93}. The Runge--Kutta method $P_Z$ has order $k$, with
    \begin{align}
	& k=2\quad\mbox{iff}\qquad\sum_{i=1}^{\Ns} \bl{b_i}=1,\quad
	\sum_{i,j=1}^{\Ns} b_i a_{ij}=\dfrac12;\label{eq:RKp2}\\
	& k=3\quad\mbox{iff}\qquad\sum_{i=1}^{\Ns} b_i=1,\quad
    \sum_{i,j=1}^{\Ns} b_i a_{ij}=\dfrac12,\quad
    \sum_{i=1}^{\Ns} b_i \bigg(\sum_{j=1}^{\Ns} a_{ij}\bigg)^2 = \dfrac13,\quad
    \sum_{i,j,\ell=1}^{\Ns}b_i a_{ij}a_{j\ell} = \dfrac16.\label{eq:RKp3}
    \end{align}
    \item \emph{Coupling condition} \cite[Section III.2.2]{HaLuWa06}.
    The partitioned Runge--Kutta method $(P_Z,\widehat{P}_U)$ has order $p$, if
    $P_Z$ and $\widehat{P}_U$ are both of order $k$ and
    \begin{align}
	& k=2\quad\mbox{if}\qquad\sum_{i=1}^{\Ns}\sum_{j=1}^{i-1}  b_i \widehat{a}_{ij} = \dfrac12,\qquad
    \sum_{i=1}^{\Ns}\sum_{j=1}^{\Ns}  \widehat{b}_i a_{ij} = \dfrac12;
    \label{eq:pRKp2}\\
    & k=3\quad\mbox{if}\qquad\sum_{i=1}^{\Ns} a_{ij} = \sum_{i=1}^{j-1} \widehat{a}_{ij},\qquad
    \sum_{i,\ell=1}^{\Ns}\sum_{j=1}^{i-1}  b_i \widehat{a}_{ij}a_{j\ell} = \dfrac16,\qquad
    \sum_{i,j,\ell=1}^{\Ns}  \widehat{b}_i a_{ij}a_{j\ell} = \dfrac16.\label{eq:pRKp3}
\end{align}
\end{itemize}
\end{lemma}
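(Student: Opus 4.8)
The plan is to derive all three items from classical Runge--Kutta structure and order theory, once the scheme \eqref{eq:PRK-ZVh} has been recognized as a genuine partitioned Runge--Kutta discretization of the coupled system \eqref{eq:PRK-ZV}. First I would record the structural observations that justify this identification. Since $\Rcal_{U_{\tau-1}}$ is a retraction it is smooth near $0\in\TsUt{U_{\tau-1}}$, with $\Rcal_{U_{\tau-1}}(0)=U_{\tau-1}$ and differential at $0$ equal to the identity on $\TsUt{U_{\tau-1}}$; hence $\widetilde{\Gcal}(V,Z):=\Gcal(\Rcal_{U_{\tau-1}}(V),Z;\prmh)$ is a smooth vector field, and $f_{\tau}(0,Z;\prmh)=\Fcal(U_{\tau-1},Z;\prmh)$ because $\Fcal(U_{\tau-1},Z;\prmh)$ lies in $\TsUt{U_{\tau-1}}$ (it is in the range of $\Idm_{\Nf}-UU^\top$, \emph{cf.}\ \eqref{eq:U}). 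With these identities the first-stage evaluations in \eqref{eq:PRK-ZVh} are exactly the $V=0$ evaluations, and \eqref{eq:PRK-ZVh} coincides with the partitioned Runge--Kutta method applied to the autonomous system $\dot{Z}=\widetilde{\Gcal}(V,Z)$, $\dot{V}=f_{\tau}(V,Z)$ in which the $Z$-component uses the tableau $P_Z$ (with abscissae $c_i=\sum_j a_{ij}$) and the $V$-component uses the explicit tableau $\widehat{P}_U$ (with abscissae $\widehat{c}_i=\sum_{j<i}\widehat{a}_{ij}$).

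For the symplectic condition I would invoke \cite[Theorem VI.4.3]{HaLuWa06} directly: a Runge--Kutta method whose coefficients satisfy \eqref{eq:symCond} is symplectic, and \eqref{eq:Z} is a Hamiltonian system (non-autonomous, but rendered autonomous by appending the time variable) with Hamiltonian $\HamU(\cdot;\prmh)=\HamN(U(t)\,\cdot\,;\prmh)$, so symplecticity of the $Z$-flow is preserved by $P_Z$ under \eqref{eq:symCond}. For the order of $P_Z$ I would match the rooted-tree order conditions of \cite[Theorem II.2.13]{HNW93}: the order-$1$ tree gives $\sum_i b_i=1$, the order-$2$ tree gives $\sum_i b_i c_i=\tfrac12$, and the two order-$3$ trees give $\sum_i b_i c_i^2=\tfrac13$ and $\sum_i b_i\sum_j a_{ij} c_j=\tfrac16$; substituting $c_i=\sum_j a_{ij}$ reproduces exactly \eqref{eq:RKp2}--\eqref{eq:RKp3}.

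For the coupling condition I would use the theory of P-series and bicolored rooted trees for partitioned Runge--Kutta methods \cite[Section III.2.2]{HaLuWa06}. Assuming $P_Z$ and $\widehat{P}_U$ already satisfy the monochromatic order conditions through order $k$, the residual conditions correspond to the bicolored trees with an edge between differently colored nodes: at order $2$ there are two such trees, giving \eqref{eq:pRKp2}; at order $3$, invoking the first relation in \eqref{eq:pRKp3} — a standard compatibility assumption matching the two sets of abscissae — collapses the otherwise-extra trees, and the two genuinely new conditions are the remaining relations in \eqref{eq:pRKp3}. Equating the P-series of the numerical one-step map with that of the exact flow of the smooth coupled system up to order $p$ then yields that $(P_Z,\widehat{P}_U)$ has order $p$.

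I expect the main obstacle to be the reduction of the first paragraph rather than the order algebra, which is textbook. One has to check carefully that inserting the retraction into the $Z$-stages, together with the special first-stage values, does not change the order, i.e.\ that \eqref{eq:PRK-ZVh} really is the partitioned Runge--Kutta scheme for a smooth autonomous right-hand side; and this rests on verifying that the matrix inverses appearing in $f_{\tau}$ in \eqref{eq:EvolTM} are well-defined and smooth in a neighborhood of $V=0$, so that the B-series and P-series expansions underlying the cited order theorems apply. Once this is settled, the three bullets are a direct transcription of \cite[Theorem~VI.4.3]{HaLuWa06}, \cite[Theorem~II.2.13]{HNW93}, and \cite[Section~III.2.2]{HaLuWa06}.
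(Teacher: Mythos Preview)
Your proposal is correct in spirit and actually goes further than the paper does. The paper provides no proof for this lemma: it is stated as a compendium of classical results, with each bullet attributed inline to the cited reference (\cite[Theorem VI.4.3]{HaLuWa06}, \cite[Theorem II.2.13]{HNW93}, \cite[Section III.2.2]{HaLuWa06}) and nothing more. There is no argument in the paper reducing \eqref{eq:PRK-ZVh} to a standard partitioned Runge--Kutta method, nor any verification that the retraction and the operator $f_{\tau}$ are smooth enough for the B-/P-series machinery to apply; the lemma is simply asserted with citations and then used in \Cref{app:pRK} to design concrete tableaux.

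Your write-up supplies exactly the structural groundwork the paper omits: identifying \eqref{eq:PRK-ZVh} as the partitioned RK method for the autonomous pair $\dot Z=\widetilde\Gcal(V,Z)$, $\dot V=f_\tau(V,Z)$ via the retraction properties, and then invoking the cited theorems. This is the natural justification and is consistent with, though more thorough than, the paper's treatment. The one point to flag is that you correctly anticipate the only nontrivial verification---smoothness and well-posedness of the inverses in \eqref{eq:EvolTM} near $V=0$---but the paper does not address this either; it is handled (implicitly) by the results of \cite[Section 5.3]{P19} that the paper already cites for the retraction construction.
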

Partitioned Runge--Kutta of order 2 and 3 can be derived as described in
\Cref{app:pRK}.

%%%%%%%%%%%%%%%%%%%%%%%%%%%%%%%%%%%%%%%%%%%%%%%%%%%%%%%%%%%%%%%%%%%
\section{Reduced dynamics under rank-deficiency}
\label{sec:evol-rank-deficient}
In Section~\ref{sec:DLR} we have proposed to approximate the phase space of the full Hamiltonian system \eqref{eq:HamSystemMatrix} by an evolving low-rank matrix manifold. Particular attention needs to be devoted to the case of \emph{overapproximation} in which
a full model solution with effective rank $r<\Nrh$
is approximated by a rank-$\Nrh$ matrix,
as pointed out first in \cite[Section 5.3]{KL07}.
In this case, a rank-deficient reduced dynamical system
needs to be solved and it is not clear how the effective rank of the reduced solution
will evolve over time.
\bl{Indeed,} in each temporal interval $\Tcalt$, the dynamics
may not remain on the reduce manifold $\Mcal_{\Nrt}$
and the matrix $S(Z):=ZZ^\top+\J{\Nrt}^\top ZZ^\top\J{\Nrt}$ may become
singular or severely ill conditioned.
This happens, for example,
when the full model state at time $t_0$ is approximated
with a rank deficient matrix, or,
as we will see in the rank-adaptive algorithm in \Cref{sec:rank-adaptivity},
when the reduced solution at a fixed time is used as initial condition to
evolve the reduced system on a manifold
of states with increased rank.

In this \bl{s}ection, we propose an algorithm to deal with the overapproximation
while maintaining the geometric structure of the Hamiltonian dynamics and of the factors $U$ and $Z$ in \eqref{eq:Rrb}.
\begin{lemma}[Characterization of the matrix $S$]\label{lem:S}
Let $S:=ZZ^\top+\J{\Nr}^\top ZZ^\top\J{\Nr}\in\R{\Nr}{\Nr}$
with $Z\in\R{\Nr}{\Np}$ and $\Np\geq \Nr$.
$S$ is symmetric positive semi-definite and it is skew-Hamiltonian, namely
$S\J{\Nr}-\J{\Nr}S^\top=0$. Moreover,
if $S$ has rank $\Nr$ then $S$ is non-singular and $S^{-1}$ is also skew-Hamiltonian.
In particular,
the null space of $S$ is even dimensional and contains all pairs of
vectors $(v,\J{\Nr}v)\in\r{\Nr}\times\r{\Nr}$ such that
both $v$ and $\J{\Nr}v$ belong to 
the null space of $Z^\top$.
\end{lemma}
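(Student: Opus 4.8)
The plan is to check the four assertions one at a time, using only the elementary relations $\J{\Nr}^\top=-\J{\Nr}$, $\J{\Nr}^\top\J{\Nr}=\Idm_{\Nr}$ and $\J{\Nr}^2=-\Idm_{\Nr}$. First I would record the quadratic-form identity: for every $x\in\r{\Nr}$,
\begin{equation*}
	x^\top S x = x^\top ZZ^\top x + (\J{\Nr}x)^\top ZZ^\top (\J{\Nr}x) = \norm{Z^\top x}^2 + \norm{Z^\top\J{\Nr}x}^2 \geq 0 .
\end{equation*}
Since $ZZ^\top$ and $\J{\Nr}^\top ZZ^\top\J{\Nr}$ are both symmetric (the latter because $(\J{\Nr}^\top ZZ^\top\J{\Nr})^\top=\J{\Nr}^\top ZZ^\top\J{\Nr}$), the matrix $S$ is symmetric, and the identity above shows that it is positive semi-definite; the identity is also reused below for the null-space claim.

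For the skew-Hamiltonian property, since $S=S^\top$ it suffices to prove that $S$ commutes with $\J{\Nr}$. Expanding and using the three relations,
\begin{equation*}
	S\J{\Nr} = ZZ^\top\J{\Nr} + \J{\Nr}^\top ZZ^\top\J{\Nr}^2 = ZZ^\top\J{\Nr} + \J{\Nr}ZZ^\top = \J{\Nr}\J{\Nr}^\top ZZ^\top\J{\Nr} + \J{\Nr}ZZ^\top = \J{\Nr}S ,
\end{equation*}
so that $S\J{\Nr}-\J{\Nr}S^\top=S\J{\Nr}-\J{\Nr}S=0$. If in addition $\rank{S}=\Nr$, then $S$ is a nonsingular square matrix, and multiplying the identity $S\J{\Nr}=\J{\Nr}S$ on the left and the right by $S^{-1}$ gives $\J{\Nr}S^{-1}=S^{-1}\J{\Nr}$; combined with $(S^{-1})^\top=(S^\top)^{-1}=S^{-1}$, this is precisely the statement that $S^{-1}$ is skew-Hamiltonian.

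It remains to describe $\ker{S}$. Since $S$ is positive semi-definite, the displayed quadratic form gives $Sx=0$ if and only if $x^\top Sx=0$, that is, if and only if $Z^\top x=0$ and $Z^\top\J{\Nr}x=0$; equivalently, $x\in\ker{S}$ exactly when both $x$ and $\J{\Nr}x$ lie in the null space of $Z^\top$, so in particular every pair $(v,\J{\Nr}v)$ with $v,\J{\Nr}v\in\ker{Z^\top}$ belongs to $\ker{S}\times\ker{S}$. To see that $\dim\ker{S}$ is even, note that $\ker{S}$ is $\J{\Nr}$-invariant, because $Sx=0$ implies $S(\J{\Nr}x)=\J{\Nr}Sx=0$ by the commutation relation; hence $\J{\Nr}$ restricts to an endomorphism of $\ker{S}$ whose square is minus the identity, so that $\det(\J{\Nr}|_{\ker{S}})^2=(-1)^{\dim\ker{S}}$, and since the left-hand side is the square of a real number, $\dim\ker{S}$ is even.

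The argument is essentially routine. The only points that require some care are the sign bookkeeping with $\J{\Nr}^\top=-\J{\Nr}$ in the commutation computation, and the (standard) complex-structure/determinant argument used to conclude that $\ker{S}$ has even dimension.
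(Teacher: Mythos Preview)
Your proof is correct and follows essentially the same route as the paper: both verify the symmetric positive semi-definite and skew-Hamiltonian properties directly, and both characterize $\ker{S}$ via the positive semi-definiteness by reducing $Sx=0$ to $Z^\top x=0$ and $Z^\top\J{\Nr}x=0$. The one genuine difference is the even-dimension argument: the paper simply invokes the standard fact that every eigenvalue of a real skew-Hamiltonian matrix has even algebraic multiplicity (hence the zero eigenvalue in particular), whereas you give a self-contained argument by observing that $\ker{S}$ is $\J{\Nr}$-invariant and that $\J{\Nr}$ restricts to a real linear map squaring to $-\Idm$, forcing the dimension to be even via the determinant. Your version is more elementary and avoids appealing to the spectral theory of skew-Hamiltonian matrices; the paper's version is shorter but relies on a result the reader may need to look up.
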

\begin{proof}
It can be easily verified that $S$ is symmetric positive semi-definite and skew-Hamiltonian. Any eigenvalue of a skew-Hamiltonian matrix has even multiplicity, hence the null space of $S$ has even dimension.
%and $v\in\ker{S}$ if and only if $\J{\Nr}v\in\ker{S}$.
%The kernel of $S$ is the set of all $v\in\r{\Nr}$ such that
%$ZZ^\top v+\J{\Nr}^\top ZZ^\top\J{\Nr}v=0$.
Since $S$ is positive semi-definite, $v\in\ker{S}$ if and only if
$ZZ^\top v=0$ and $ZZ^\top\J{\Nr}v=0$, that is $\ker{S}=\ker{Z^\top}\cap\ker{Z^\top\J{\Nr}}$. Observe that
all the elements $v$ of the kernel
of $Z^\top$ are such that $\J{\Nr}^\top v\in\ker{Z^\top\J{\Nr}}$.
\end{proof}
In addition to the algebraic limitations associated with the solution of a rank-deficient
system, the fact that the matrix $S$ might be singular or ill conditioned
prevents the reduced basis from evolving on the manifold
of the orthosymplectic matrices.
As shown in \cite[Proposition 4.3]{P19}, if $U(t^{\tm})\in\Ucalt$ then $U(t)\in\R{\Nf}{\Nrt}$ solution of
\eqref{eq:U} in $\Tcalt$ satisfies $U(t)\in \Ucalt$ for all $t\in\Tcalt$,
owing to the fact that $\Fcal(U,Z;\prmh)$ belongs to the space 
$H_U$ \bl{in \eqref{eq:HU}}.
%$H_U:=\{X_U\in\R{\Nf}{\Nrt}:\; X_U^\top U=0,\,X_U\J{\Nrt}=\J{\Nf}X_U\}$.
%
\begin{lemma}\label{lem:FinTM}
The function
$\Fcal(\cdot,\cdot;\prmh):\R{\Nf}{\Nrt}\times \Zcalt\rightarrow\R{\Nf}{\Nrt}$ defined in \eqref{eq:Fcal}
is such that
$\Fcal(U,Z;\prmh)\in H_U$ if and only if $U\in\Ucalt$
and $Z\in \Zcalt$.
\end{lemma}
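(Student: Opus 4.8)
The plan is to prove the two implications separately; the substantive one is ``$U\in\Ucalt$ and $Z\in\Zcalt$ $\Rightarrow$ $\Fcal(U,Z;\prmh)\in H_U$'', which sharpens \cite[Proposition 4.3]{P19} recalled just above, while the converse is essentially a well-definedness statement. Throughout I would abbreviate $P:=\Idm_{\Nf}-UU^\top$, $Y:=\nabla\HamN(UZ;\prmh)$, $A:=\J{\Nf}YZ^\top-YZ^\top\J{\Nrt}^\top$ and $S:=ZZ^\top+\J{\Nrt}^\top ZZ^\top\J{\Nrt}$, so that $\Fcal(U,Z;\prmh)=PAS^{-1}$ by \eqref{eq:Fcal}; this expression is well defined exactly when $S$ is invertible, i.e.\ $\rank{S}=\Nrt$, which by \eqref{eq:ManUZ} means $Z\in\Zcalt$.

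For the forward implication I would verify the two defining conditions of $H_U$ in turn. The condition $\Fcal(U,Z;\prmh)^\top U=0$ is immediate: since $P^\top=P$ and, for $U\in\Ucalt$, $U^\top U=\Idm_{\Nrt}$, one has $PU=U(\Idm_{\Nrt}-U^\top U)=0$, hence $\Fcal(U,Z;\prmh)^\top U=S^{-\top}A^\top PU=0$. For the equivariance condition $\Fcal(U,Z;\prmh)\J{\Nrt}=\J{\Nf}\Fcal(U,Z;\prmh)$ I would assemble three commutation facts: (i) $A\J{\Nrt}=\J{\Nf}A$, which follows by a one-line computation from $\J{\Nrt}^\top\J{\Nrt}=\Idm_{\Nrt}$, $\J{\Nf}^2=-\Idm_{\Nf}$ and $\J{\Nrt}^\top=-\J{\Nrt}$; (ii) $S^{-1}\J{\Nrt}=\J{\Nrt}S^{-1}$, which is precisely the skew-Hamiltonian property of $S^{-1}$ combined with the symmetry $S=S^\top$, both provided by \Cref{lem:S}; and (iii) $P\J{\Nf}=\J{\Nf}P$. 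Granting (i)--(iii), $\Fcal(U,Z;\prmh)\J{\Nrt}=PAS^{-1}\J{\Nrt}=PA\J{\Nrt}S^{-1}=P\J{\Nf}AS^{-1}=\J{\Nf}PAS^{-1}=\J{\Nf}\Fcal(U,Z;\prmh)$, which is the claim.

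Facts (i) and (ii) are routine (the latter is already in \Cref{lem:S}); the one genuinely non-routine ingredient, and what I expect to be the main obstacle, is (iii), i.e.\ that $UU^\top$ commutes with $\J{\Nf}$ whenever $U\in\Ucalt$. I would deduce this from the auxiliary identity $\J{\Nf}U=U\J{\Nrt}$: setting $W:=\J{\Nf}U-U\J{\Nrt}$ and using the orthosymplecticity relations $U^\top U=\Idm_{\Nrt}$, $U^\top\J{\Nf}U=\J{\Nrt}$ together with $\J{\Nf}^\top\J{\Nf}=\Idm_{\Nf}$ and $\J{\Nrt}^2=-\Idm_{\Nrt}$, one checks $U^\top W=0$ and $W^\top W=0$, whence $W=0$. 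Transposing $\J{\Nf}U=U\J{\Nrt}$ then gives $U^\top\J{\Nf}=\J{\Nrt}U^\top$, and therefore $\J{\Nf}UU^\top=U\J{\Nrt}U^\top=UU^\top\J{\Nf}$, which is (iii).

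For the converse, the very membership $\Fcal(U,Z;\prmh)\in H_U$ presupposes that $\Fcal(U,Z;\prmh)$ exists, which as observed forces $Z\in\Zcalt$, and that $H_U$ is the horizontal space at $U$, which presupposes $U\in\Ucalt$. If an intrinsic argument is preferred, substituting $\Fcal(U,Z;\prmh)=PAS^{-1}$ into the two conditions and using (i)--(ii) reduces them to $A^\top U(\Idm_{\Nrt}-U^\top U)=0$ and $(\J{\Nf}UU^\top-UU^\top\J{\Nf})A=0$, from which $U^\top U=\Idm_{\Nrt}$ and $U^\top\J{\Nf}U=\J{\Nrt}$, i.e.\ $U\in\Ucalt$, are recovered provided the reduced vector field does not vanish identically — this last caveat being the only delicate point on this side.
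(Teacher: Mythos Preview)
Your proof is correct and follows essentially the same route as the paper: both reduce the membership $\Fcal(U,Z;\prmh)\in H_U$ to the three commutation facts $A\J{\Nrt}=\J{\Nf}A$, $P\J{\Nf}=\J{\Nf}P$, and $S^{-1}\J{\Nrt}=\J{\Nrt}S^{-1}$, with the last one supplied by \Cref{lem:S}. The only difference is that the paper simply cites \cite[Proposition~4.3]{P19} for the first two identities, whereas you supply a self-contained argument for (iii) via the auxiliary identity $\J{\Nf}U=U\J{\Nrt}$; your derivation of that identity from $W^\top W=0$ is clean and correct. Your treatment of the converse is also somewhat more explicit than the paper's, which effectively handles it only through the clause ``if and only if $\J{\Nrt}S^{-1}=S^{-1}\J{\Nrt}$'' and the surrounding remark on the triple $(U,Z,S(Z))$; your caveat about the degenerate case where the reduced vector field vanishes is well taken and is not addressed in the paper either.
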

\begin{proof}
Let $X_U:=\Fcal(U,Z;\prmh)=(\Idm_{\Nf}-UU^\top) AS^{-1}$, where
$A:=\J{\Nf}YZ^{\top} - YZ^{\top}\J{\Nrt}^\top$.
The condition $X_U^\top U=0$ is satisfied for every $U\in\R{\Nf}{\Nrt}$ orthogonal and $Z\in\R{\Nrt}{\Np}$.
Concerning the second condition, it can be easily shown %\emph{cf.} \cite[Proposition 4.3]{P19},
that $\J{\Nf}A\J{\Nrt}^\top=A$ and $\J{\Nf}(\Idm_{\Nf}-UU^\top) = (\Idm_{\Nf}-UU^\top)\J{\Nf}$.
Hence, $\J{\Nf}X_U = (\Idm_{\Nf}-UU^\top)A \J{\Nrt}S^{-1}$
and this is equal to $X_U\J{\Nrt}$ if and only if
$\J{\Nrt}S^{-1}=S^{-1}\J{\Nrt}$. This condition follows from \Cref{lem:S}.
\end{proof}
\Cref{lem:FinTM} can be equivalently stated by considering the velocity field $\Fcal$ as a function of the triple $(U,Z,S(Z))$. Then
$\Fcal(U,Z,S(Z);\prmh)$ belongs to $H_U$ if and only if
$U\in\Ucalt$, $Z\in\R{\Nrt}{\Np}$ and $S(Z)$ is non-singular, symmetric and skew-Hamiltonian.
If the matrix $S$ is not invertible, i.e. $Z\notin\Zcalt$,
its inverse needs to be replaced by
some approximation $S^\dagger$.
By \Cref{lem:FinTM}, if $S^\dagger$ is not symmetric skew-Hamiltonian,
then $\Fcal^\dagger(U,Z;\prmh):=(\Idm_{\Nf}-UU^\top) AS^\dagger$ does no longer belong
to the horizontal space $H_U$.
If, for example, $S^\dagger$  is the pseudo inverse of $S$,
then the above condition is theoretically satisfied, but in numerical computations
only up to a small error, because, if $S$ is rank-deficient,
then its pseudoinverse
corresponds to the pseudoinverse of the truncated SVD of $S$.

To overcome these issues in the numerical solution
of the reduced dynamics \eqref{eq:UZred},
we introduce two approximations:
first we replace the rank-deficient matrix $S$ with an $\reg$-regularization that preserves the skew-Hamiltonian structure of $S$
and then, in finite precision arithmetic, we set as velocity field for the evolution of the reduced basis $U$ an approximation of $\Fcal$ in the space $H_{U(t)}$, for all $t\in\Tcalt$.
The $\reg$-regularization consists in
diagonalizing $S$ and then replacing, in the resulting diagonal factor,
the elements below a certain threshold with a fixed factor $\reg\in\r{}$.
This is possible since 
(real) symmetric matrices are always diagonalizable by orthogonal transformations. However, unitary transformations do not preserve the skew-Hamiltonian structure. We therefore consider the following
Paige Van Loan (PVL) decomposition, based on symplectic equivalence transformations.
\begin{lemma}[{\cite{VanLoan84}}]\label{lem:PVL}
Given a skew-Hamiltonian matrix $S\in\R{\Nr}{\Nr}$ there exists a symplectic orthogonal matrix $W\in\R{\Nr}{\Nr}$ such that
$W^\top S W$ has the PVL form
\begin{equation}\label{eq:PVL}
    W^\top S W =
    \begin{pmatrix}
       S_{\Nrh} & R\\
       & S_{\Nrh}^\top
    \end{pmatrix},
\end{equation}
where $S_{\Nrh}\in\R{\Nrh}{\Nrh}$ is an upper Hessenberg matrix.
\end{lemma}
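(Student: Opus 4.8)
The plan is to recover the decomposition \eqref{eq:PVL} constructively, following Van Loan's original argument: I would build $W$ as a finite product of elementary symplectic orthogonal matrices that progressively reduce $S$ by a Householder/Givens elimination on the block columns. Two preliminary facts organize everything. First, \emph{structure preservation}: if $W\in\R{\Nr}{\Nr}$ is symplectic orthogonal then $W^\top S W$ is again skew-Hamiltonian. Indeed, $W^\top W=\Idm_{\Nr}$ together with $W^\top\J{\Nr}W=\J{\Nr}$ forces $\J{\Nr}W=W\J{\Nr}$, whence
\[
  (W^\top S W)\J{\Nr}-\J{\Nr}(W^\top S W)^\top
  = W^\top\bigl(S\J{\Nr}-\J{\Nr}S^\top\bigr)W = 0 .
\]
Writing $\Nr=2\Nrh$ and splitting a skew-Hamiltonian matrix into $\Nrh\times\Nrh$ blocks, this says that every intermediate matrix in the reduction has the shape $\bigl(\begin{smallmatrix}A & G\\ Q & A^\top\end{smallmatrix}\bigr)$ with $G=-G^\top$ and $Q=-Q^\top$; hence, once the $(2,1)$-block has been driven to zero and the $(1,1)$-block made upper Hessenberg, the $(2,2)$-block is automatically the transpose of the $(1,1)$-block and we have landed exactly in \eqref{eq:PVL} (with $R=-R^\top$). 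Second, two families of elementary matrices suffice and are symplectic orthogonal: the ``doubled'' Householder reflectors $\mathrm{diag}(H,H)$ with $H\in\R{\Nrh}{\Nrh}$ an orthogonal reflector, and the symplectic Givens rotations acting as a planar rotation in the coordinate plane spanned by $e_k$ and $e_{\Nrh+k}$ and as the identity elsewhere (a planar rotation has determinant $1$, so it preserves the standard symplectic form restricted to that plane). Since products of symplectic orthogonal matrices are again symplectic orthogonal, any $W$ assembled from these factors is admissible.

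With this in hand I would carry out the reduction column by column. Assume inductively that after $k-1$ steps the first $k-1$ columns of the $(1,1)$-block are in upper Hessenberg form and the first $k-1$ columns of the $(2,1)$-block vanish; by skew-symmetry of $Q$, its first $k-1$ rows then vanish as well, so the elimination is ``locked'' in that corner. At step $k$ I would first apply a doubled Householder $\mathrm{diag}(H,H)$, with $H$ supported on coordinates $k+1,\dots,\Nrh$ and chosen so that $H$ maps the current vector $Q_{k+1:\Nrh,\,k}$ to a multiple of its first entry; since $H$ fixes the leading $k$ coordinates this leaves the already-processed columns untouched, and column $k$ of $Q$ is reduced to a single possibly nonzero entry in position $(k+1,k)$. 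Next I would apply the symplectic Givens rotation in the plane $(k+1,\Nrh+k+1)$ with angle chosen so that $-\sin\theta\,A_{k+1,k}+\cos\theta\,Q_{k+1,k}=0$: this annihilates that last entry, hence (by skew-symmetry) the whole $k$-th column and the $k$-th row of $Q$, while not touching columns $1,\dots,k-1$. Finally I would apply a second doubled Householder on coordinates $k+1,\dots,\Nrh$ to compress $A_{k+1:\Nrh,\,k}$ onto its first entry, producing the $k$-th column of the Hessenberg pattern; acting on rows of $Q$ that are now zero, it does not revive column $k$ of the $(2,1)$-block. After $\Nrh-1$ such steps the $(2,1)$-block is zero and the $(1,1)$-block is upper Hessenberg, and setting $W$ equal to the product of all the elementary matrices used yields \eqref{eq:PVL} by the structure-preservation fact above.

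I expect the main obstacle to be not any single computation but the inductive bookkeeping in the step above: one has to verify, at each stage, that the Householder and Givens factors are genuinely symplectic orthogonal, that they create the claimed zeros, and --- the delicate point --- that they leave intact the zeros created at earlier stages. What makes this go through is precisely the structure-preservation fact: because skew-Hamiltonian form is maintained throughout, zeroing the $k$-th column of the $(2,1)$-block automatically zeros its $k$-th row, so the elimination pattern propagates instead of unravelling. Everything else is a standard Householder/Givens elimination count; for the complete verification I would refer to \cite{VanLoan84}.
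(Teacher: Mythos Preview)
Your proposal is correct and is precisely the constructive Paige--Van Loan reduction that the paper defers to by citing \cite{VanLoan84} without reproducing a proof. The structure-preservation observation, the two families of elementary symplectic orthogonal transformations, and the column-by-column elimination you outline are exactly Van Loan's original argument, so there is nothing to compare.
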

In our case, since the matrix $S$ is symmetric, its PVL decomposition
\eqref{eq:PVL}
yields tridiagonal matrices with identical blocks $S_{\Nrht}=S_{\Nrht}^\top$.
We further diagonalize $S_{\Nrht}$ using orthogonal transformations to obtain
$S_{\Nrht}= T^\top D_{\Nrht}T$, with $T^\top T=\Idm_{\Nrht}$
and diagonal $D_{\Nrht}\in\R{\Nrht}{\Nrht}$. Hence,
\begin{equation*}
    S =
    W\begin{pmatrix}
       T^\top D_{\Nrht}T & \\
       & T^\top D_{\Nrht}T
    \end{pmatrix} W^\top=:QDQ^\top,\;\mbox{ with }
    \;
     Q:=W\begin{pmatrix}
       T^\top &\\
       & T^\top
    \end{pmatrix},
    \quad
    D:=\begin{pmatrix}
       D_{\Nrht} & \\
       & D_{\Nrht}
    \end{pmatrix}.
\end{equation*}
It can be easily verified that $Q\in\R{\Nrt}{\Nrt}$
is orthogonal and symplectic.
The PVL factorization \Cref{lem:PVL} can be implemented
as in, e.g., \cite[Algorithms 1 and 2]{BKM05},
with arithmetic complexity $O(\Nrht^3)$.
The factorization is based on
orthogonal symplectic transformations obtained from
Givens rotations \bl{\cite{Givens58}} and
symplectic Householder matrices, defined as
the direct sum of Householder reflections \cite{PVL81}.

%Since (real) symmetric matrices are diagonalizable by orthogonal matrices, there exists $Q\in\R{\Nrt}{\Nrt}$ orthogonal such that $Q^\top S Q=:D$ is a diagonal matrix.
Once the matrix $S$ has been brought in the PVL form,
we perform the $\reg$-regularization.
Introduce the diagonal matrix $D_{\Nrht,\reg}\in\R{\Nrht}{\Nrht}$ defined as,
\begin{equation*}
(D_{\Nrht,\reg})_i = \left\{
    \begin{array}{ll}
         (D_{\Nrht})_i & \mbox{if}\; (D_{\Nrht})_i>\reg\\
         \reg & \mbox{otherwise},
    \end{array}\right.
    \qquad \forall\, 1\leq i\leq \Nrht,
\end{equation*}
and let us denote with $D_{\reg}\in\R{\Nrt}{\Nrt}$ the diagonal matrix composed of two blocks, both equal to $D_{\Nrht,\reg}$.
The matrix $\Sreg:=Q D_{\reg} Q^\top \in\R{\Nrt}{\Nrt}$
is symmetric positive definite and skew-Hamiltonian.
Its distance to $S$
%in any norm $\norm{\cdot}$ invariant under unitary transformation, is given by
is bounded, in the Frobenius norm, as
$\norm{S-\Sreg} = \norm{Q(D-D_{\reg})Q^\top} = \norm{D-D_{\reg}}\leq \sqrt{m_{\reg}}\,\reg$,
%In the Frobenius norm, the error is bounded by $\sqrt{m_{\reg}}\,\reg$
where $m_{\reg}$ is the number of
elements of $D_{\Nrht}$ that are smaller than $\reg$.
%while, in the $2$-norm, the error is bounded by $\reg$.
Since the $\reg$-regularized matrix $\Sreg$ is invertible,
$\Sreg^{-1}$ exists and is skew-Hamiltonian.
This property allows to construct the vector field
$\Fcal_{\reg}:=(\Idm_{\Nf}-UU^\top)(\J{\Nf}YZ^{\top} - YZ^{\top}\J{\Nrt}^\top)\Sreg^{-1}\in\R{\Nf}{\Nrt}$
with the property that $\Fcal_{\reg}$ belongs to the tangent space
of the orthosymplectic $\Nf\times\Nrt$ matrix manifold.
To gauge the error introduced by approximating the velocity field $\Fcal$
in \eqref{eq:Fcal} with $\Fcal_{\reg}$, let 
us denote with $\Lcal$ the operator $\Lcal:=(\Idm_{\Nf}-UU^\top)(\J{\Nf}YZ^{\top} - YZ^{\top}\J{\Nrt}^\top)$, so that \eqref{eq:U} reads $\dot{U}S=\Lcal$.
Then, the error made in the evolution of the reduced basis \eqref{eq:U},
by the $\reg$-regularization, is
\begin{equation*}
\begin{aligned}
\norm{\Fcal_{\reg}S-\Lcal}
    & = \norm{\Lcal(\Sreg^{-1}S-\Idm_{\Nrt})}
    = \norm{\Lcal Q(D_{\reg}^{-1}D-\Idm_{\Nrt})Q^\top}\\
    & \leq\norm{\Lcal}\norm{D_{\reg}^{-1}D-\Idm_{\Nrt}}
    = \frac{\sqrt{2}}{\reg}\,\norm{\Lcal}\,
    \sqrt{\sum_{j=\Nrht-m_{\reg}+1}^{\Nrht}|D_j-\reg|^2}\,.
\end{aligned}
\end{equation*}
%
%In order to approximate the vector field $\Fcal$ in \eqref{eq:U} with a quantity that belongs to the tangent space to the orthosymplectic matrix manifold, we need the inverse of the matrix $\Sreg$ to be symmetric and skew-Hamiltonian.
%
Observe that the resulting vector field $\Fcal_{\reg}$
belongs to the space $H_U$ by construction.
However, in finite precision arithmetic,
the distance of the computed $\Fcal_{\reg}$ from $H_U$
might be affected by a small error that depends on the
norm of the operators $\Lcal$ and $\Sreg$.
This rounding error can affect the symplecticity of the reduced basis
over time, whenever the matrix $S$ is severely ill conditioned.
%This is dictated by the fact that the accumulation of error in the other components makes the computed velocity field slightly drift from the tangent space
To guarantee that the evolution of the reduced basis computed in finite precision remains on the manifold of orthosymplectic matrices with an error of the order of machine precision, we introduce a correction
of the velocity field $\Fcal_{\reg}$.
    Observe that any $X_U\in H_U$ is of the form
    $X_U=[F|\J{\Nf}^\top F]$, with $F\in\R{\Nf}{\Nrht}$ satisfying $U^\top F=0_{\Nrt\times \Nrht}$.
    Let us write $\Fcal_{\reg}$ as $\Fcal_{\reg}=[F|G]$, with $F^\top=[F_1^\top|F_2^\top]\in\R{\Nrht}{\Nf}$
    and $G^\top=[G_1^\top|G_2^\top]\in\R{\Nrht}{\Nf}$.
    Since $U^\top \Fcal_{\reg}=[U^\top F|U^\top G]=0_{\Nrt\times\Nrt}$, we can take $\Fcal_{\reg,\star}:=[F|\J{\Nf}^\top F]$. Alternatively,
    we can define $\Fcal_{\reg,\star}:=[W|\J{\Nf}^\top W]$ where $W^\top=[X^\top|-Y^\top]\in\R{\Nrht}{\Nf}$
    and
    $2 X:=F_1+G_2$, $2Y:=G_1-F_2$. It easily follows that, with either definitions, $\Fcal_{\reg,\star}$ belongs to $H_U$ and
    the error in the Frobenius norm is
    \begin{equation*}
        \norm{\Fcal_{\reg}-\Fcal_{\reg,\star}}^2 =
        \dfrac14 \norm{\Fcal_{\reg}\J{\Nrt}-\J{\Nf}\Fcal_{\reg}}^2
        =
        \norm{G-\J{\Nf}^\top F}^2.
    \end{equation*}
    We summarize the regularization scheme in \Cref{algo:reg}.
\begin{algorithm}
\caption{$\reg$-regularization}\label{algo:reg}
\begin{algorithmic}[1]
 \Procedure{\textsc{Regularization}}{$U\in\Ucalt, Z\in\R{\Nrt}{\Np},\reg$}
 \State Compute $S \gets ZZ^\top+\J{\Nrt}^\top ZZ^\top\J{\Nrt}$
 \If{$\rank{S}<\Nrt$}
 \State Compute the PVL factorization $Q D Q^\top = S$
 \State Set $\Sreg\gets QD_{\reg}Q^\top$ where $D_{\reg}$ is the $\reg$-regularization of $D$
 \State Compute $\Fcal_{\reg}\gets(\Idm_{\Nf}-UU^\top)(\J{\Nf}YZ^{\top} - YZ^{\top}\J{\Nrt}^\top)\Sreg^{-1}$ \label{line:Fe}
 \State Compute $\Fcal_{\reg,\star}$ by enforcing the skew-Hamiltonian constraint
 \State Set $\Fcal\gets \Fcal_{\reg,\star}$
 \Else
 \State Compute $\Fcal\gets(\Idm_{\Nf}-UU^\top)(\J{\Nf}YZ^{\top} - YZ^{\top}\J{\Nrt}^\top)S^{-1}$ \label{line:F}
 \EndIf
 \State \Return velocity field $\Fcal\in H_U$ 
 \EndProcedure
\end{algorithmic}
\end{algorithm}

%%%%%%%%%%%%%%%%%%%%%%%%%%%%%%%%%%%%%%%%%%%%%%%%%%%%%%%%%%%%%%%%
\section{Rank-adaptivity}\label{sec:rank-adaptivity}

\bl{The dynamical reduced basis method that we have introduced in Section~\ref{sec:DLR} is based on approximating the full model solution, in each temporal interval $\Tcalt$, on a low-dimensional space of size $\Nrht$. The fact that the size of the reduced space can change over time allows to fully exploit the local low-rank nature of the solution. In this section,
we propose an algorithm to detect when the reduced space needs to be enlarged or reduced and how this operation is performed.}
%
%In this \bl{s}ection, we propose a rank-adaptive algorithm that enables the update of the size of the reduced manifold at the end of each temporal interval.
The method is summarized in \Cref{algo:rank-update}.

Here we focus on the case where the current rank of the reduced solution is too small
to accurately reproduce the full model solution.
In cases where the rank is too large,
%namely the overapproximation,
one can perform an $\reg$-regularization
following \Cref{algo:reg} or decrease the rank
by looking at the spectrum of the reduced state and remove the modes associated with the lowest singular values.

\subsection{Error indicator}\label{sec:err-indicator}

Error bounds for parabolic problems are long-established and have been widely used to certify global reduced basis methods, \emph{cf.} e.g. \cite{grepl2005posteriori,urban2014improved}. However, their extension to noncoercive problems often results in pessimistic bounds that cannot be used to properly assess the quality of the reduced approximation.
Few works have focused on the development of error estimates (not bounds) for reduced solutions of advection-dominated problems.
In this work, we propose an error indicator based on the linearized residual of the full model. A related approach, known as Dual-Weighted Residual method (DWR) \cite{meyer2003efficient}, consists in deriving an estimate of the approximation error
via the dual full model and the linearization of the error of a certain functional of interest (e.g. surface integral of the solution, stress, displacement, ...).
Despite the promising results of this approach, the arbitrariness in the choice of the functional clashes with the goal of having a procedure as general as possible.

We begin with the continuous full model \eqref{eq:HamSystemMatrix}
and, for its time integration, we consider the implicit RK scheme used in the temporal
discretization of the dynamical system for the expansion coefficients $Z$ in \eqref{eq:PRK-ZVh}, and having coefficients 
$(\{b_i\}_{i=1}^{\Ns},\{a_{ij}\}_{i,j=1}^{\Ns})$. Then,
assuming that $\Rcal_{\tm}\in\R{\Nf}{\Np}$ is known,
\begin{equation}\label{eqn:start_error_equation}
\begin{array}{lll}
& \Rcal_{\tau}=\Rcal_{\tau-1} + \dt \sum\limits_{i=1}^{s} b_i k_i, \\[0.5em]
&\qquad k_1 = \J{2N}\nabla_{\Rcal}\HamN( \Rcal_{\tau-1}),\\[0.5em]
&\qquad k_i = \J{2N}\nabla_{\Rcal}\HamN\bigg( \Rcal_{\tau-1}+\dt \sum\limits_{j=1}^{s} a_{i,j}k_j; \prmh \bigg) \qquad i=2,\dots, s.
\end{array}
\end{equation}
The discrete residual operator, in the temporal interval $\Tcalt$, is
\begin{equation}\label{eqn:residual_operator}
    \res_{\tau}(\Rcal_{\tau},\Rcal_{\tau-1};\prmh)
    = \Rcal_{\tau}-\Rcal_{\tau-1}-\dt \sum_{i=1}^{s}b_i k_i=0.
\end{equation}
We consider the linearization of the residual operator \eqref{eqn:residual_operator} at $\left(R_{\tau},R_{\tau-1}\right)$, where
$R_{\tau}$ is the approximate reduced solution at time $t^{\tau}$, obtained from \eqref{eq:PRK-ZVh} as
$R_{\tau}=U_{\tau}Z_{\tau}$; thereby
\begin{equation}\label{eqn:linearization_residual}
\begin{array}{lll}
    \res_{\tau}(\Rcal_{\tau},\Rcal_{\tau-1};\prmh)
    = & \res_{\tau}(R_{\tau},R_{\tau-1};\prmh)
     + \dfrac{\partial \res_{\tau}}{\partial \Rcal_{\tau}}
    \biggr\rvert_{\left(R_{\tau},R_{\tau-1}\right)}
    \left( \Rcal_{\tau}-R_{\tau}\right)\\[0.5em]
    & + \dfrac{\partial \res_{\tau}}{\partial \Rcal_{\tau-1}}
    \biggr\rvert_{\left(R_{\tau},R_{\tau-1}\right)}
    \left( \Rcal_{\tau-1}-R_{\tau-1}\right)
    + \mathcal{O}\left(\left\| \Rcal_{\tau}-R_{\tau}\right\|^2
    + \left\| \Rcal_{\tau-1}-R_{\tau-1}\right\|^2\right).
\end{array}
\end{equation}
Similar procedures have been adopted in the formulation of the piecewise linear methods for the approximation of nonlinear operators, providing accurate approximations in case of low-order nonlinearities.
From the residual operator, an approximation of the local error $\Rcal_{\tau}-R_{\tau}$ is given by
the matrix-valued quantity $\mathbf{E}_{\tau}$
defined as
\begin{equation}\label{eqn:error_equation}
\mathbf{E}_{\tau} :=
-\bigg(\dfrac{\partial \res_{\tau}}{\partial \Rcal_{\tau}}
\biggr\rvert_{\left(R_{\tau},R_{\tau-1}\right)}\bigg)^{-1}  \bigg(\res_{\tau}(R_{\tau},R_{\tau-1};\prmh)+
\dfrac{\partial \res_{\tau}}{\partial \Rcal_{\tau-1}}
\biggr\rvert_{\left(R_{\tau},R_{\tau-1}\right)}
\bl{\mathbf{E}_{\tau-1}}\bigg),
\end{equation}
\bl{with $\mathbf{E}_0:=\Rcal(t_{0})-U_0Z_0$.}
The quantity defined by \eqref{eqn:error_equation} is the first order approximation of the error between the reduced and the full model solution. In particular, it quantifies the discrepancy due to the local approximation \eqref{eq:Rrb}.
Even if the linearization error is negligible, the computational cost related to the assembly of the entire full-order residual $\rho$ and its Jacobian, together with the solution of a linear system for any instance of the $\Np$ parameters $\prmh$,
makes the indicator unappealing if used in the context of highly efficient reduced approximations.
In \cite{meyer2003efficient}, a hierarchical approach has been proposed to alleviate the aforementioned computational bottleneck
but it relies on the offline phase to capture the dominant modes of the exact error.
Instead, in this work, we solve \eqref{eqn:error_equation} on a subset \bl{$\prmhsub$}
of the $\Np$ vector-valued parameters $\prmh$ of cardinality $\widetilde{\Np}\ll \Np$, and only \bl{each $\freqE$ time steps} during the simulation.
To further reduce the computational cost, we compute \eqref{eqn:error_equation} on a coarse mesh in the parameter domain, whenever possible, and then $\mathbf{E}_{\tau}$ is recovered on the original mesh via spline interpolation.
Although the assembly and solution of the sparse linear system in \eqref{eqn:error_equation} has, for example, arithmetic complexity $\mathcal{O}(N^{\frac{1}{2}})$ \cite{george1981computer} for problems originating from the discretization of two-dimensional PDEs,
%these operations are performed only for few sub-samples of the parameter and not at every time step.
this sampling strategy allows to reduce the computational cost required by the error estimator as compared to the evolution of the reduced basis and the coefficients, as discussed in \Cref{sec:numerical_tests}. 

%%%%%%%%%%%%%%%%%%%%%%%%%%%%%%%%%%%%%%%%%%%%%%%%%%%%%%%%%%%%%%%%%%%%%%%%%
\subsection{Criterion for rank update}\label{sec:criterion_rank_update}

Let $\mathbf{E}_{\tau}\in\R{\Nf}{\Np}$ be the error indicator matrix obtained
in \eqref{eqn:error_equation}.
To decide when to activate the rank update algorithm, we take into account that, for advection-dominated and hyperbolic problems discretized using spectral methods, the error accumulates, and the effect of unresolved modes on the resolved dynamic contributes to this accumulation \cite{couplet2003intermodal}. Moreover, it has been noticed \cite{spantini2013preconditioning} that, for many problems of practical interest, the modes associated with initially negligible singular values might become relevant over time, potentially causing a loss of accuracy if a reduced manifold of fixed dimension is employed.

Let us define $t^{\tau}$ as the current time, $t^{*}$ as the last time at which the dimension of the reduced basis $U$ was updated and let $\lambda_{\tau}$ be the number of past updates at time $t^{\tau}$.
At the beginning of the simulation $t^{*}=t^{0}$ and $\lambda_{0}=0$. The rank update is performed if the ratio between the norms of error indicators at $t^{\tau}$ and $t^{*}$ satisfies the criterion
\begin{equation}\label{eq:ratio_tmp}
    \dfrac{\norm{\mathbf{E}_{\tau}}}{\norm{\mathbf{E}_{*}}} > r c^{\lambda_{\tau}}\,,
\end{equation}
where $r, c\in\mathbb{R}$ are control parameters \bl{larger than 1}.
The ratio of the norms of the error indicator gives a qualitatively indication of how the error is increasing in time and \eqref{eq:ratio_tmp} fixes a maximum acceptable growing slope. 
Deciding what represents an acceptable slope is a problem-dependent task
but the numerical results in Section \ref{sec:numerical_tests} show little sensitivity of the algorithm with respect to $r$ and $c$. Moreover, the variable $\lambda_{\tau}$ induces a frequent rank-update when $n_{\tau}$ is small and vice versa when $n_{\tau}$ is large, hence controlling both the efficiency and the accuracy of the updating algorithm. \bl{We postpone to future investigations greedy strategies for the selection of optimal control parameters}.
Note that other (combinations of) criteria are possible: one alternative is to check that
the norm of the error indicator remains below a fixed threshold;
another possibility is to control the norm of some approximate gradient of the error indicator, etc. By numerically testing these various criteria, we
observe that, at least in the numerical simulations performed,
the criterion \eqref{eq:ratio_tmp} based on the ratio of error indicators is
reliable and robust and gives the largest flexibility.

%%%%%%%%%%%%%%%%%%%%%%%%%%%%%%%%%%%%%%%%%%%%%%%%%%%%%%%%%%%%%%%%%%%%%%%%%
\subsection{Update of the reduced state}\label{sec:rank-update}
If criterion \eqref{eq:ratio_tmp} is satisfied, the rank adaptive algorithm updates the current reduced solution to a new state having a different rank.
Specifically, assume that, in the time interval $\Tcal_{\tm}$, we have solved the
discrete reduced problem \eqref{eq:PRK-ZVh} to
obtain the reduced solution $R_{\tm}=U_{\tm} Z_{\tm}$ in $\Mcal_{\Nrh_{\tm}}$.
%We aim at constructing $\widehat{R}_{\tm}=$

As a first step, we derive an updated basis
$U\in\Ucalt$
from $U_{\tm}\in\Ucal_{\tau-1}$,
with $\Nrht=\Nrh_{\tau-1}+1$.
To this aim, we enlarge
$U_{\tm}$ with two extra columns derived from
an approximation of the error, analogously to a greedy strategy.
%two possible quantities: the first is related with the approximate error, while the second is associated with the problem residual.
%
In greater detail, with the algorithm described in \Cref{sec:err-indicator},
we derive the error matrix $\mathbf{E}_{\tau}$
associated with the reduced solution at the current time.
Via a thin SVD, we extract the left singular vector associated with the principal component of the error matrix, and we normalize it in the $2$-norm
to obtain the vector $e\in\r{\Nf}$.
We finally enlarge the basis $U_{\tm}$ with the two columns $[e\,|\,\J{\Nf}^\top e]\in \R{\Nf}{2}$.
The rationale for this choice is
that we seek to increase the accuracy of the low-rank approximation by
adding to the reduced basis the direction
that is worst approximated by the current reduced space.
Numerical evidence of the improved quality of the updated basis in approximating the
full model solution is provided in \Cref{sec:SW1D}.

From the updated matrix $[U_{\tm}|\,e\,|\,\J{\Nf}^\top e]\in\R{\Nf}{\Nrt}$,
we construct an orthosymplectic basis in the sense of \Cref{def:ortsym},
by performing a QR-like decomposition using symplectic unitary transformations. %\cite{BG86}.
In particular, we employ
a symplectic (modified) Gram-Schmidt algorithm \cite{Salam05},
with the possibility of adding
reorthogonalization
\cite{Giraud03}
%analogous to the classical Gram-Schmidt process,
to enhance the stability and robustness of the algorithm.

Once the updated reduced basis $U\in\Ucalt$ is computed,
we derive the matrix $Z\in\R{\Nrt}{\Np}$
by expanding the current reduced solution $R_{\tm}$
in the updated basis.
Therefore, the updated $Z$ satisfies
$UZ = R_{\tm}$,
which results in $Z = U^\top R_{\tm}$.

\begin{remark}
Since the updated reduced state coincides with the reduced solution $R_{\tm}$ at time $t^{\tm}$,
all invariants of \eqref{eq:HamSystemMatrix} preserved by the partitioned Runge--Kutta scheme \eqref{eq:PRK-ZVh} are conserved during the rank update.
\end{remark}
Observe that, even if the current reduced state $R_{\tm}$ is in $\Mcal_{\Nrmt}$, it does not
belong to the manifold $\Mcal_{\Nrt}$.
Indeed, one easily shows that $Z=U^\top R_{\tm}\in\R{\Nrt}{\Np}$
does not satisfy the full-rank condition,
    \begin{equation*}
    \begin{aligned}
    %\rank{ZZ^\top+\J{\Nrt}^\top ZZ^\top\J{\Nrt}}
        \rank{S(Z)}
       % & = \rank{U^\top U_{\tm} Z_{\tm}Z_{\tm}^\top U_{\tm}^\top U +
       % \J{\Nrt}^\top U^\top U_{\tm} Z_{\tm}Z_{\tm}^\top U_{\tm}^\top U\J{\Nrt}}\\
        & =\rank{U^\top U_{\tm} [Z_{\tm} Z_{\tm}^\top +
        \J{\Nrt}^\top Z_{\tm} Z_{\tm}^\top \J{\Nrt}] U_{\tm}^\top U}\\
        &\leq \min\{\rank{U^\top U_{\tm}}, \rank{Z_{\tm} Z_{\tm}^\top +
        \J{\Nrt}^\top Z_{\tm} Z_{\tm}^\top \J{\Nrt}}\}
        \leq \Nrmt.
    \end{aligned}
    \end{equation*}
As shown in \Cref{lem:FinTM}, the fact that $Z\notin\Zcalt$ implies that
the velocity field $\Fcal$ in \eqref{eq:Fcal}, describing the evolution of the reduced basis, is not well-defined. Therefore, we need to introduce an approximate
velocity field for the solution of the reduced problem \eqref{eq:UZred}
in the temporal interval $\Tcal_{\tau}$
with initial conditions $(U,Z)\in\Ucalt\times \R{\Nrt}{\Np}$.
We refer to \Cref{sec:evol-rank-deficient} for a discussion about this issue and
the description of the algorithm designed to
solve the rank-deficient reduced
dynamics ensuing from the rank update.

\begin{algorithm}
\caption{Rank update}\label{algo:rank-update}
\begin{algorithmic}[1]
 \Procedure{\textsc{Rank\_update}}{$U_{\tm}, Z_{\tm}, \mathbf{E}_*,\bl{\lambda_{\tm},\mathbf{E}_{\tm},r,c}$}
 \State \parbox[t]{.85\linewidth}{Compute the error indicator matrix $\mathbf{E}_{\bl{\tau}}\in\R{\Nf}{\bl{\widetilde{\Np}}}$ in  \eqref{eqn:error_equation}}
 \If{criterion \eqref{eq:ratio_tmp} is satisfied}
 \State Compute $Q\Sigma V^\top = \mathbf{E}_{\bl{\tau}}$ via thin SVD \label{line:thinSVD}
 \State Set $e\gets Q_1/\norm{Q_1}_2$ where $Q_1\in\r{\Nf}$ is the first column of the matrix $Q$
 \State Construct the enlarged basis $\overline{U}\gets [U_{\tm}|\,e\,|\,\J{\Nf}^\top e]\in\R{\Nf}{(2\Nrh_{\tm}+2)}$
 \State Compute $U$ via symplectic orthogonalization of $\overline{U}$ with symplectic Gram-Schmidt
 \State Compute the coefficients $Z\gets U^\top U_{\tm}Z_{\tm}$ \label{line:Z}
 \State \bl{Update reference error indicator matrix $\mathbf{E}_{*}\gets \mathbf{E}_{\tau}$}
 \State Set $\Nrht=\Nrh_{\tm}+1$ \bl{and $\lambda_{\tau}=\lambda_{\tm}+1$}
 \Else
 \State $U\gets U_{\tm}$, $Z\gets Z_{\tm}$, $\Nrht=\Nrh_{\tm}$ \bl{and $\lambda_{\tau}=\lambda_{\tm}$}
 \EndIf
 \State \Return updated factors $(U,Z)\in\Ucalt\times \R{\Nrt}{\Np}$, \bl{$\mathbf{E}_{*},\mathbf{E}_{\tau}\in\mathbb{R}^{2N\times \bl{\widetilde{\Np}}}$ and $\lambda_{\tau}$}
 \EndProcedure
\end{algorithmic}
\end{algorithm}

%%%%%%%%%%%%%%%%%%%%%%%%%%%%%%%%%%%%%%%%%%%%%%%%%%%%%%%%%%%%%%%%%%%%%%%%%%%%

\subsection{Approximation properties of the rank-adaptive scheme}
To gauge the local approximation properties of the rank-adaptive scheme for the solution of the reduced dynamical system \eqref{eq:UZred},
we consider the temporal interval $\Tcalt$ where the first rank update is performed.
In other words, assume that $R_{\tm}=U_{\tm}Z_{\tm}$, with
$(U_{\tm},Z_{\tm})\in\Ucal_{\tm}\times\Zcal_{\tm}$,
is the numerical
approximation of the solution $R(t^{\tm})\in\Mcal_{\Nr_{\tm}}$ of the reduced dynamical system \eqref{eq:dynn} at time $t^{\tm}$ with $\Nrh_{\tm}=\Nrh_{\tau-2}=\ldots=\Nrh_1$.
After the rank update at time $t^{\tm}$, the reduced state $R$ satisfies the local evolution problem
\begin{equation}\label{eq:R}
	\left\{
	\begin{array}{ll}
		\dot{R}(t) =\Pcal^{\reg}_{R}\Xcal_{\HamN}(R(t),\prm_h),\qquad\quad\mbox{for }\; t\in\Tcalt,\\
		R(t^{\tm}) = R_{\tm}=U_{\tm}^{\Nrht}Z_{\tm}^{\Nrht}, &
	\end{array}\right.	
\end{equation}
where $(U_{\tm}^{\Nrht},Z_{\tm}^{\Nrht})\in\Ucalt\times\R{\Nrt}{\Np}$ are the rank-updated factors, and
\begin{equation*}
\Pcal^{\reg}_{R}\Xcal_{\HamN}:=
(\Idm_{\Nf}-UU^\top)(\Xcal_{\HamN}Z^\top + \J{\Nf}\Xcal_{\HamN}Z^\top\J{\Nrt}^\top)\Sreg(Z)^{-1}Z+ UU^\top \Xcal_{\HamN},\qquad\forall\, R=UZ\in\mathbb{R}^{\Nf}{\Np}.
\end{equation*}
We make the assumption that the reduced problem \eqref{eq:dynn} is well-posed.
Let $\Rcal(t)\in\Vcal^{\Np}_{\Nf}$ be the full model solution of problem
\eqref{eq:HamSystemMatrix} in the temporal interval $\Tcalt$ with given initial condition
$\Rcal(t^{\tm})$.
The error between the approximate reduced solution of \eqref{eq:R}
and the full model solution
at time $t^{\tau}\in\Tcal$ is given by
\begin{equation*}
R_{\tau}-\Rcal(t^{\tau}) = \big(R_{\tau}-R(t^{\tau})\big)
+ \big(R(t^{\tau})-\Rcal(t^{\tau})\big).
\end{equation*}
The quantity $e^{\tau}_{\textrm{A}}:=R_{\tau}-R(t^{\tau})$ is the approximation error associated with the partitioned Runge--Kutta discretization scheme,
and can be treated using standard convergence analysis techniques, in light of the fact that the retraction map is Lipschitz continuous in the Frobenius norm, as shown in \cite[Proposition 5.7]{P19}.
The term $e_{\textrm{RA}}(t):=R(t)-\Rcal(t)$, for any $t\in\Tcalt$,
is associated with the rank update and can be bounded as
\begin{equation*}
    \begin{aligned}
       d_t\norm{e_{\textrm{RA}}}
       & \leq \norm{\Pcal^{\reg}_{R}\Xcal_{\HamN}(R) - \Xcal_{\HamN}(\Rcal)}
       \leq
       \norm{\Pcal^{\reg}_{R}\Xcal_{\HamN}(R) - \Xcal_{\HamN}(R)} +
       \norm{\Xcal_{\HamN}(R)-\Xcal_{\HamN}(\Rcal)}\\
       & \leq L_{\Xcal_{\HamN}}\norm{e_{\textrm{RA}}} +
       \norm{(\Idm_{\Nf}-\Pcal^{\reg}_{R})\Xcal_{\HamN}(R)}, 
    \end{aligned}
\end{equation*}
where $L_{\Xcal_{\HamN}}$ is the Lipschitz continuity constant of $\Xcal_{\HamN}$.
Gronwall's inequality \cite{Gro19} gives, for all $t\in\Tcalt$,
\begin{equation}\label{eq:err}
\norm{e_{\textrm{RA}}(t)} \leq \norm{e_{\textrm{RA}}(t_0)}\,e^{L_{\Xcal_{\HamN}} t}+
		\int_{t^{\tm}}^{t^{\tau}} e^{L_{\Xcal_{\HamN}}(t-s)}\norm{(\Idm_{\Nf}-\Pcal^{\reg}_{R})\Xcal_{\HamN}(R)}\,ds.
\end{equation}
Observe that the estimate \eqref{eq:err} depends on the distance between the Hamiltonian vector field at the reduced state and its image under the map $\Pcal^{\reg}_{R}$
that approximates the orthogonal projection operator on the tangent space of $\Mcal_{\Nrt}$.
Although a rigorous bound for this term is not available, 
we expect that it can be controlled arbitrary well by increasing the size of the reduced basis, as will also be demonstrated
in \Cref{sec:numerical_tests}.
Moreover, the estimate \eqref{eq:err} on the whole temporal interval $\Tcal$ depends exponentially on the final time $T$. A linear dependence on $T$ can be obtained only in special cases, for example when $\nabla_{\Rcal} \HamN$ is uniformly negative monotone.

%%%%%%%%%%%%%%%%%%%%%%%%%%%%%%%%%%%%%%%%%%%%%%%%%%%%%%%%%%%%%%%%%%

\section{Computational complexity of the rank-adaptive algorithm}\label{sec:cost}
In this \bl{s}ection we discuss the computational cost required
for the numerical solution of the reduced problem \eqref{eq:UZred}
with the rank-adaptive algorithm introduced in \Cref{sec:rank-adaptivity}.

In each temporal interval $\Tcalt$, the algorithm
consists of two main steps: the evolution step, which
entails the repeated evaluation of
the velocity fields $\Fcal$ and $\Gcal$ in \eqref{eq:PRK-ZVh} at each stage of the Runge--Kutta temporal integrator,
and the rank update step, which requires the evaluation of the error indicator
and the update of the approximate reduced solution at the current time step.
%Let us look at the two steps separately by starting from the rank update step.

%\textbf{Computational cost of rank update}.
The rank update strategy introduced in \Cref{sec:rank-adaptivity},
and summarized in \Cref{algo:rank-update},
has an arithmetic complexity of $O(\Nfh\Np^2)+ O(\Nfh\Nrht^2) + O(\Nfh\Np\Nrht)$,
%requires $O(\Nfh\Np^2)$ flops for the thin SVD of the error matrix in \Cref{line:thinSVD}, $O(\Nfh\Nrht^2)$ for the symplectic Gram-Schmidt algorithm, and $O(\Nfh\Np\Nrht)$ for the matrix-matrix multiplications.
%to compute the expansion coefficients $Z$ in \Cref{line:Z} can be performed in  flops.
and the computational bottleneck is the computation of the error indicator.
As suggested in \Cref{sec:err-indicator}, sub-sampling techniques \bl{and mesh coarsening} can be employed to overcome this limitation.
%
%\textbf{Computational cost of reduced system evolution}.
The evolution step consists in solving the discrete reduced system \eqref{eq:PRK-ZVh} in each temporal interval.
To understand the computational complexity of this step, we neglect the
number of nonlinear iterations required by the implicit temporal integrators
for the evolution of the coefficients $Z$.
The solution of \eqref{eq:PRK-ZVh} requires the evaluation of
four operators: the velocity fields $\Gcal$
and $\Fcal$, the retraction $\Rcal$ and its inverse tangent map $f_{\tau}$.
The algorithms proposed in \cite[Section 5.3.1]{P19} for the computation of $\Rcal$
and $f_{\tau}$ have arithmetic complexity $O(\Nfh\Nrht^2)$.
We denote with $C_{\HamN}=C_{\HamN}(\Nfh,\Nrht,\Np)$
the computational cost to evaluate the gradient of the reduced Hamiltonian
at the reduced solution.
Finally, the velocity field $\Fcal$ is computed via \Cref{algo:reg}
with a computational complexity of
$O(\Nfh\Nrht\Np)+O(\Nfh\Nrht^2)+ O(\Np\Nrht^2)+O(\Nrht^3)$,
while $C_{\Ham}$ is the cost to evaluate $Y$.
It follows that the rank-adaptive algorithm for the solution of the reduced system \eqref{eq:PRK-ZV} with a partitioned Runge--Kutta scheme has a computational complexity
being at most linear in the dimension of the full model $\Nfh$,
provided the computational cost $C_{\HamN}$ to evaluate the Hamiltonian vector field
at the reduced solution has a comparable cost.
Concerning the latter, observe that the assembly of the reduced state $R$ from the factors $U$ and $Z$ and the matrix-vector multiplication $U^\top\nabla_R\HamN(R;\prmh)$ require
$O(\Nfh\Np\Nrht)$ operations. Therefore,
the computational bottleneck of the algorithm is associated
with the evaluation of the Hamiltonian gradient at the reduced state $R$.

This problem is well-known in model order reduction and emerges whenever reduced models involve non-affine and nonlinear operators, \emph{cf.}  e.g. \cite[Chapters 10 and 11]{QMN16}.
Several hyper-reduction techniques have been proposed to mitigate or overcome this
limitation, resulting in approximations of nonlinear operators that can be evaluated at a cost
independent of the size of the full model. However,
we are not aware of any hyper-reduction method able to \emph{exactly} preserve
the Hamiltonian phase space structure during model reduction.
Furthermore, hyper-reduction methods entail an offline phase to
learn the low-rank structure of the nonlinear operators by means of snapshots of the
full model solution. %The dynamical reduced basis approach does not entail any offline phase.
Compared to traditional \emph{global} model order reduction,
in a dynamical reduced basis approach the constraints on the computational complexity of the reduced operators is less severe since we allow the dimension of the full model
to enter, albeit at most linearly, the computational cost of the operations involved.
This means that the dynamical model order reduction can accommodate Hamiltonian gradients
where each vector entry depends only on a few, say $k\ll\Nfh$, components of the reduced solution, with a resulting computational cost of $C_{\HamN}=O(\Nfh\Np\Nrht)+O(k\Nfh\Np)$.
This is the case when, for example, the dynamical system \eqref{eq:HamSystem}
ensues from a \emph{local} discretization of a partial differential equation in Hamiltonian form. Note that this assumption is also required for the effective application of
discrete empirical interpolation methods (DEIM) \cite{ChSo10}.

When dealing with low-order polynomial nonlinearities of the Hamiltonian vector field, we can use tensorial techniques to 
perform the most expensive operations only once and not at each instance of the parameter, as discussed in the following.

%%%%%%%%%%%%%%%%%%%%%%%%%%%%%%%%%%%%%%%%%%%%%%%%%

\subsection{Efficient treatment of polynomial nonlinearities}\label{sec:nonlinear}

Let us consider the explicit expression
of the cost $C_{\Ham}$ for different Hamiltonian functions $\HamN$.
If the Hamiltonian vector field $\Xcal_{\Ham}$
in \eqref{eq:HamSystemMatrix} 
is linear, then
\begin{equation*}
    \Gcal(U,Z;\prmh) = \J{\Nr} U^\top \nabla_R\HamN(R;\prmh) = \J{\Nr} U^\top A UZ,\qquad \forall\, R=UZ\in\Mcal_{\Nrt},
 \end{equation*}   
where $A\in\R{\Nf}{\Nf}$ is a given linear application,
associated with the spatial discretization of the Hamiltonian function $\HamN$. Standard matrix-matrix multiplication to compute $\Gcal$ has arithmetic complexity
$O(\Nfh\Nrht^2) + O(\Np\Nrht^2) + O(\Nrht k)$,
% O(\Nrh k) to compute $A_1=AU$
% O(\Nfh\Nrh^2) to compute $A_2=U^\top A_1$
% O(\Np\Nrh^2) to compute $A_2 Z$
where $k$ is the number of nonzero
entries of the matrix $A$. The computational complexity of the algorithm
is therefore still linear in $\Nfh$ provided the matrix $A$ is sparse.
This is the case in applications we are interested in where
the Hamiltonian system \eqref{eq:HamSystemMatrix} ensues from a
local spatial approximation of a partial differential equation.
    
In case of low-order polynomial nonlinearities, we use the tensorial representation \cite{cstefuanescu2014comparison} of the nonlinear function and rearrange the order of computing.
    The gist of this approach is to exploit the structure of the polynomial nonlinearities to separate the quantities
    that depend on the dimension of the full model from the reduced variables, by manipulating the order of computation of the various factors.
    Consider the evolution equations for the coefficients $Z$ in \eqref{eq:Z} for a single value $\prm_j$ of the parameter $\prmh\in\Sprmh$. The corresponding reduced Hamiltonian vector
    can be expressed in the form
    \begin{equation}\label{eqn:polynomial_nonlinearity}
        \J{\Nr}\nabla_{Z_j}\HamN_U(Z_j;\prm_j)=U^{T} J_{2N}G^{\{q\}}\bigg (\mathop{\bigotimes}\limits_{i=1}^{q}A_{i}UZ_j\bigg)
        =\underbrace{U^{T} J_{2N}G^{\{q\}}\bigg (\mathop{\bigotimes}\limits_{i=1}^{q}A_{i}U\bigg)}_{\mathcal{G}_U}\underbrace{\bigg( \mathop{\bigotimes}\limits_{i=1}^{q} Z_j \bigg)}_{\mathcal{Z}},
    \end{equation}
    where $Z_j\in\Zcalt$ with $\Np=1$, $q\in\mathbb{N}$ is the polynomial degree of the nonlinearity, $A_i\in\mathbb{R}^{\Nf\times\Nf}$ are sparse discrete differential operators, $G^{\{q\}}$ represents the matricized $q$-order tensor and $\otimes$ denotes the Kronecker product.
    The last expression in \eqref{eqn:polynomial_nonlinearity}
    allows to separate the computations involving factors of size $\Nfh$ from the reduced coefficients $Z$, so that the matrix $\mathcal{G}_U\in\mathbb{R}^{\Nrt\times (\Nrt)^{q}}$ can be precomputed during the offline phase.
   
    In the case of the proposed dynamical reduced basis method, we employ the tensorial POD approach to reduce the computational complexity of the evaluation of $\Gcal$, the RHS of \eqref{eq:Z}, and its Jacobian needed
    in the implicit symplectic integrator at each time step of the numerical integrator. We start by noticing that a straightforward calculation of the second expression in \eqref{eqn:polynomial_nonlinearity} suggests $O(c\Nfh\Np\Nrht)+O(c\Np q k)+O(c \Nfh \Np q)$ operations, where the first term is due to the reduced basis ansatz and the Galerkin projection, the second term to the multiplication by the sparse matrices $A_i$ and the third term to the evaluation of a polynomial of degree $q$ for each entry of a $\Nf\times\Np$ matrix. The constant $c$ represents the number of iterations of the Newton solver and
    $k:=\max_i k_i$, where $k_i$ is the number of nonzero entries of $A_i$.
    Moreover, in each iteration we evaluate not only the nonlinear term but also its Jacobian, with an additional cost of $O(c\Nfh\Np(q-1))+O(c\Np k_{\mathcal{G}}\Nrht)+O(c\Nfh\Np\Nrht^2)$ operations, with $k_{\mathcal{G}}$ being the number of nonzero entries of the full-order Jacobian.
    These terms represent, respectively, the operations required to evalute the polynomial functions in the Jacobian, the assembly of the Jacobian matrix and its Galerkin projection onto the reduced basis.
    This high computational cost can again be mitigated by resorting to the second formula in \eqref{eqn:polynomial_nonlinearity}, where the term $\mathcal{G}_U$ is precomputed at each iteration, for each stage of the partitioned RK integrator \eqref{eq:PRK-ZVh}.
    To estimate the computational cost of the procedure we resort to the multi-index notation by introducing $\mathbf{n}:=\left(n_{\tau},\dots,n_{\tau}\right)\in\mathbb{R}^{n}$ and hence $\mathcal{G}_U\mathcal{Z}$ in \eqref{eqn:polynomial_nonlinearity} can be recast as
    \begin{equation}\label{eqn:matrix_tensorial_POD}
       \mathcal{G}_U\mathcal{Z} = \underbrace{ U^TJ_{\Nr} \sum_{\ell\leq 2\mathbf{n}}}_{\text{(III)}} \prod_{1<i\leq q} \overbrace{ \text{diag}\underbrace{\left( A_{i}U_{\ell} \right)}_{\text{(I)}} \underbrace{A_1 U_{\ell}}_{\text{(I)}}}^{\text{(II)}} Z_{j}^{\ell}.
    \end{equation}
    The arithmetic complexity of this step is $O(qk\Nrht)+O((q-1)\Nfh\Nrht^{q})+O(\Nfh\Nrht^{q+1})$, where the first term is due to the matrix multiplication of the $q$ matrices $A_iU$ in (I), the second term to the pointwise and diagonal matrices multiplications involved in the computations of (II) and the third term to the  multiplications by $U^TJ_{2N}$ in (III). We stress that the cost required to assemble $\mathcal{G}_U$ is independent of the number of parameters $\Np$ and the number of iterations of the nonlinear solver. Once $\mathcal{G}_U$ has been precomputed, the evaluation of the reduced RHS has a computational cost of $O(c\Np\Nrht^{q+1})$ \cite{cstefuanescu2014comparison}.
    The same splitting technique is exploited for each evaluation of the reduced Jacobian and most of the precomputed terms in \eqref{eqn:matrix_tensorial_POD} can be reused. The proposed treatment of polynomial nonlinearities results in an effective reduction of the computational cost in case of low-order polynomial nonlinearity $(q=2,3)$, a large set of vector-valued parameters $(\Np\gg 10)$ and a moderate number $\Nrht$ of basis vectors.

%%%%%%%%%%%%%%%%%%%%%%%%%%%%%%%%%%%%%%%%%%%%%%%%%%%%%%%%%%%%%%%%%%%%%%%%%

%%%%%%%%%%%%%%%%%%%%%%%%%%%%%%%%%%%%%%%%%%%% Numerical Tests
\section{Numerical tests}\label{sec:numerical_tests}

To assess the performance of the proposed adaptive dynamical structure preserving reduced basis method,
we consider finite-dimensional parametrized Hamiltonian dynamical systems arising from the spatial approximation of partial differential equations.
Let $\Omega\subset\r{d}$ be a continuous domain and
let $u:\Tcal\times \Omega\times\Sprm \rightarrow \r{m}$ belong to a Sobolev space $\Vcal$
endowed with the inner product $\big<\cdot,\cdot\big>$.
%Assume that the parameter $\prm\in\Sprm$ is fixed.
A parametric evolutionary PDE in Hamiltonian form can be written as
\begin{equation}\label{eq:HamPDE}
    \left\{
    \begin{aligned}
    & \bl{\dot{u}}(t,x;\prm) = \Jcal \dfrac{\delta\Hcal}{\delta u}(u;\prm), & \qquad\mbox{in}\;\Omega\times \Tcal,\\
    & u(0,x;\prm) = u^0(x;\prm), &\qquad\mbox{in}\;\Omega,
    \end{aligned}\right.
\end{equation}
with suitable boundary conditions prescribed at the boundary $\partial\Omega$.
Here, the dot denotes the derivative with respect to time, and $\delta$ denotes the variational derivative of the Hamiltonian $\Hcal$ defined as
\begin{equation*}
    \dfrac{d}{d\epsilon} \Hcal(u+\epsilon v;\prm) \bigg|_{\epsilon=0} =
    \bigg<\dfrac{\delta\Hcal}{\delta u},v\bigg>,\qquad\forall\, u,v\in\Vcal,
\end{equation*}
so that, for $\ell=1,\ldots,m$ and $u_{\ell,k}:=\partial_{x_k} u_{\ell}$, it holds
\begin{equation*}
    \dfrac{\delta\Hcal}{\delta u_{\ell}} = \dfrac{\partial H}{\partial u_{\ell}} 
    - \sum_{k=1}^d \dfrac{\partial}{\partial x_k}\left(\dfrac{\partial H}{\partial u_{\ell,k}}\right) + \ldots,\qquad
    \mbox{with}\qquad
    \Hcal(u;\prm) = \int_{\Omega} H(x,u,\partial_x u,\partial_{xx} u,\ldots;\prm)\,dx.
\end{equation*}
In the numerical tests, we consider, for any fixed value of the parameter $\prm_j\in\Sprmh$, numerical spatial approximations of
\eqref{eq:HamPDE} that yield a $\Nf$-dimensional Hamiltonian system in canonical form
\begin{equation}\label{eq:HamODE}
    \left\{
    \begin{aligned}
    & \bl{\dot{u}_h}(t;\prm_j) = \J{\Nf} \nabla\Hcal_h(u_h;\prm_j), &\qquad \mbox{in}\;\Tcal,\\
    & u_h(0;\prm_j) = u_h^0(\prm_j), &
    \end{aligned}\right.
\end{equation}
where $u_h$ belongs to a finite $\Nf$-dimensional subspace of $\Vcal$,
$\nabla_{u}$ is the gradient with respect to the state variable $u_h$
and $\Hcal_h:\r{\Nf}\rightarrow\r{}$ is such that $\Delta x_1\ldots\Delta x_d\Hcal_h$ is a suitable approximation of $\Hcal$.
Testing \eqref{eq:HamODE} for $\Np$ values $\prmh = \{\prm_j\}_{j=1}^{\Np}$
of the parameter, yields a matrix-valued ODE of the form
\eqref{eq:HamSystemMatrix}, where the $j$-th column of the unknown
matrix $\Rcal(t)\in\R{\Nf}{\Np}$ is equal to $u_h(t,\prm_j)$ for all $j=1,\ldots,\Np$.

We validate our adaptive dynamical reduced basis method on several representative Hamiltonian systems of the form \eqref{eq:HamODE}, of increasing complexity, and compare the quality of the adaptive dynamical approach with a reduced model with a global basis. \bl{The proposed approach, including all the steps introduced in the previous sections, is summarized in Algorithm \ref{algo:summary}}. For the global model, we consider the method proposed in \cite[Section 4.2]{peng2016symplectic}, where a reduced basis is built via a complex SVD of a suitable matrix of snapshots and the reduced model is derived via symplectic Galerkin projection onto the space spanned by the global basis.
We analyze and compare the accuracy, conservation properties and efficiency of the reduced models by monitoring the various quantities.
To assess the approximation properties of the reduced model,
we track the error, in the Frobenius norm, between the full model solution $\Rcal$ and
the reduced solution $R$ at any time $t\in\Tcal$, namely
\begin{equation}\label{eqn:error_metric}
E(t)=\left \| \Rcal(t) - R(t) \right \|.
\end{equation}
Moreover, we study the conservation of the Hamiltonian via the relative error in the $\ell^1$-norm in the parameter space $\Sprmh$, that is
\begin{equation}\label{eqn:relative_error_Hamiltonian}
    E_{\mathcal{H}_h}(t) = \sum_{i=1}^{\Np} \left | \dfrac{\mathcal{H}\left( U_{\tau}Z_{\tau}^i;\eta_i\right)-\mathcal{H}\left( U_{0}Z_{0}^i;\eta_i\right)}{\mathcal{H}\left( U_{0}Z_{0}^i;\eta_i\right)}\right |.
\end{equation}
Finally, we monitor the computational cost of the different reduction strategies. Throughout, the runtime is defined as the sum of the lengths of the offline and online phases in the case of the complex SVD (global method); while, for the dynamical approaches it is the time required to evolve basis and coefficients \eqref{eq:PRK-ZV} plus the time required to compute the error indicator and update the dimension of the approximating manifold, in the adaptive case.

%First, the toy problem of linear wave equation (scrivere che si fanno analisi unpo' più in dettaglio). Second,
The adaptive dynamical reduced basis method is numerically tested on two nonlinear problems, the shallow water and Schr\"odinger equations in one and two dimensions.
%with setups that lead to the increase of the effective rank of the solutions and hence require the increase of the size of the reduced approximating manifold.
Finally, we consider a preliminary application to particle simulations of plasma physics problem with the reduction of the Vlasov equation with a forced external electric field,
modeling the evolution of charged particle beams.
All numerical simulations are performed using \textsc{Matlab} computing environment on computer nodes with Intel Xeon E5-2643 (3.40GHz). \bl{The code and the data supporting the findings of this study are available from the authors upon request.} 

\begin{algorithm}
\caption{\bl{Rank-adaptive reduced basis method}}\label{algo:summary}
\begin{algorithmic}[1]
\Procedure{\textsc{Rank-adaptive\_RBM}}{$\Rcal_0$, $\prmh$, $\prmhsub$, $\freqE$, $\Nrh_1$, $\reg$, $r$, $c$}
 \State Compute $U_0\in\Ucal_1$ via complex SVD of $\Rcal_0(\prmh)$ truncated at the $\Nrh_1$-th mode, and $Z_0 \gets U_0^\top \Rcal_0(\prm_h)$
 \State Initialize the error indicator matrix $\mathbf{E}_0 \gets \Rcal_0(\prmhsub)-U_0U_0^\top\Rcal_0(\prmhsub)\in \R{\Nf}{\widetilde{\Np}}$ and $\mathbf{E}_{*}\gets \mathbf{E}_{0}$
 \For{$\tau=1,\ldots,N_{\tau}$}
 \State \parbox[t]{.8\linewidth}{Calculate $(U_{\tau},Z_{\tau})\in\Ucalt\times \R{\Nrt}{\Np}$ using partitioned RK integrator \eqref{eq:PRK-ZVh}, starting from 
 $(U_{\tm},Z_{\tm})\in\Ucal_{\tau-1}\times \R{2n_{\tau-1}}{\Np}$:}
 \Statex \hspace{1.2cm}$\bigcdot$ Use the tensorial POD approach \eqref{eqn:polynomial_nonlinearity} to assemble the operator $\mathcal{G}$
 \Statex \hspace{1.2cm}$\bigcdot$ Use the retraction map given in \eqref{eq:retraction} to compute $\mathcal{R}_{U_{\tau-1}}$
 \Statex \hspace{1.2cm}$\bigcdot$ \parbox[t]{.75\linewidth}{Compute $f_{\tau}$ according to \eqref{eq:EvolTM}, using $\textsc{Regularization}$ (Algorithm \ref{algo:reg}), with parameter $\reg$ as input, to assemble $\mathcal{F}$}
 \vspace{0.15cm}
 \If{mod$(\tau,\freqE)=0$}
    \State \parbox[t]{.75\linewidth}{Compute the error indicator matrix and check the rank update criterion using \textsc{Rank\_update} (Algorithm~\ref{algo:rank-update}) as\\
    $(U_{\tau},Z_{\tau},\mathbf{E}_{*},\mathbf{E}_{\tau},\lambda_{\tau})=\textsc{Rank\_update}(U_{\tau},Z_{\tau},\mathbf{E}_{*},\mathbf{E}_{\tm},\lambda_{\tm},r,c)$}
 \EndIf
 \EndFor
 \EndProcedure
\end{algorithmic}
\end{algorithm}

%%%%%%%%%%%%%%%%%%%%%%%%%%%%%%%%%%%%%%%%%%%%%%%%%%%%%%%%%%%%%%%%%%%%%%
\subsection{Shallow water equations}\label{sec:SW1D}

The shallow water equations (SWE) describe the kinematic behaviour of a thin inviscid single fluid layer flowing over a variable topography.
In the setting of irrotational flows and flat bottom topography, the fluid is described by a scalar potential $\phi$ and the canonical Hamiltonian formulation \eqref{eq:HamPDE} is recovered \cite{sultana2013hamiltonian}. 
The resulting time-dependent nonlinear system of PDEs is defined as
\begin{equation}\label{eq:SWE}
    \left\{
    \begin{aligned}
    & \dfrac{\partial h}{\partial t} + \nabla\cdot(h\nabla\phi)=0, &\qquad \mbox{in}\;\Omega\times \Tcal,\\
    & \dfrac{\partial \phi}{\partial t} + \dfrac12|\nabla\phi|^2 + h =0, &\qquad \mbox{in}\;\Omega\times \Tcal,\\
    & h(0,x;\prmh) = h^0(x;\prmh), &\qquad\mbox{in}\;\Omega,\\
    & \phi(0,x;\prmh) = \phi^0(x;\prmh), &\qquad\mbox{in}\;\Omega,
    \end{aligned}\right.
\end{equation}
 with spatial coordinates $x\in\Omega$, time $t\in\Tcal$, state variables $h,\phi:\Omega\times\Tcal\mapsto \mathbb{R}$,
 %the first derivative operator $\frac{\partial}{\partial t}$ in time $t$
 $\nabla \cdot$ and $\nabla$ divergence and gradient differential operators in $x$, respectively. The variable $\phi$ is the scalar potential of the fluid and $h$ represents the height of the free-surface, normalized by its mean value. The system is coupled with periodic boundary conditions for both the state variables.
The evolution problem \eqref{eq:SWE}
admits a canonical symplectic Hamiltonian form \eqref{eq:HamPDE}
with the Hamiltonian
\begin{equation}
    \HamN(h,\phi;\prm) = \dfrac12 \int_{\Omega} \big(
    h|\nabla\phi|^2+
    h^2 \big)\,dx.
\end{equation}
We consider numerical simulations in $d=1$ and $d=2$ dimensions
on rectangular spatial domains.
The domain $\Omega$ is partitioned using a Cartesian mesh in $M-1$ equispaced intervals in each dimension,
having mesh width $\Delta x$ and $\Delta y$, when $d=2$.
As degrees of freedom of the problem we consider the
nodal values of the height and potential, i.e.
$u_h(t;\prmh):=(h_h,\phi_h)=(h_1,\dots,h_N,\phi_1,\dots,\phi_N)$, for all $t\in\mathcal{T}$ and $\prmh\in\Sprmh$,
where $\Nfh:=M^d$, $h_m=h_{i,j}$ with $m:=(j-1)M+i$, and
$i,j=1,\ldots,M$. In 1D, $\Nfh=M$, and the index $j$ is dropped.

We consider second order accurate central finite difference schemes to discretize the differential operators in \eqref{eq:SWE}, and denote with $D_x$ and $D_y$ the discrete differential operators
acting in the $x$- and $y$-direction, respectively.
The semi-discrete formulation of \eqref{eq:SWE} represents a canonical Hamiltonian system with the gradient of the Hamiltonian function with respect to $u_h$ given by
\begin{equation}\label{eq:grad_Hamiltonian}
    \nabla\HamN_h(u_h;\prmh) = 
    \begin{pmatrix}
    \dfrac{1}{2}\left [ \left ( D_x \phi_h \right )^2 + \left ( D_y \phi_h \right )^2 \right ] + h_h \\
    -D_x\left ( h \odot D_x\phi_h \right )-D_y\left ( h \odot D_y\phi_h \right )
    \end{pmatrix},
\end{equation}
where $\odot$ is the Hadamard product between two vectors.
The discrete Hamiltonian is
\begin{equation}\label{eq:SW-1D_Ham}
    \HamN_h(u_h;\prmh) = \dfrac12 \sum_{i,j=1}^{M}
    \bigg(h_{i,j} \left[ \left( \dfrac{\phi_{i+1,j}-\phi_{i-1,j}}{2\Delta x} \right)^2 + \left( \dfrac{\phi_{i,j+1}-\phi_{i,j-1}}{2\Delta y} \right)^2 \right ] + h_{i,j}^2\bigg).
\end{equation}
In the one-dimensional case, the operator $D_y$ vanishes.
    
%%%%%%%%%%%%%%%%%%%%%%%%%%%%%%%%%%%%%%%%%%%%%%%%%%%%%%%%%%%%%%%%
\subsubsection{One-dimensional shallow water equations (SWE-1D)}

For this example, we set $\Omega=\left [ -10, 10 \right ]$ and we consider the parameter domain $\Sprm = \left [ \frac{1}{10},\frac{1}{7} \right ] \times \left [ \frac{2}{10},\frac{15}{10} \right ]$. The discrete set of parameters $\Sprmh$ is obtained by uniformly sampling $\Sprm$ with $10$ samples per dimension, for a total of $\Np=100$ different configurations. Problem \eqref{eq:SWE} is completed with the initial condition
\begin{equation}\label{eq:init_cond_SWE1D}
    \begin{cases}
    h^{0}(x;\prmh) = 1+\alpha e^{-\beta x^2},\\
    \phi^{0}(x;\prmh) = 0,
    \end{cases}
\end{equation}
with $\prmh=(\alpha,\beta)$, where $\alpha$ controls the amplitude of the initial hump in the depth $h$ and $\beta$ describes its width.
We consider a partition of the spatial domain $\Omega$ into $\Nfh-1$ equispaced intervals
%$(x_i,x_{i+1})$ with $x_i= (i-1)\Delta x$ for $i=1,\ldots,\Nfh-1$, and $\Delta x=L/\Nfh$. We set
with \bl{$\Nfh=1001$}.
The full model solution $u_h(t;\prmh)$ is computed using a uniform step size $\Delta t = 10^{-3}$ in the time interval $\mathcal{T}=(0,T:=7]$. %for a total of $N_{\tau}=7000$ time steps.
We use the implicit midpoint rule as time integrator because, being symplectic, it preserves the geometrical properties of the flow of the semi-discrete equation associated to \eqref{eq:grad_Hamiltonian}.
To study the reducibility properties of the problem, we explore the solution manifold
and collect the solutions to the high-fidelity model in different matrices.
The global snapshot matrix $\mathcal{S}\in\mathbb{R}^{\Nf\times(N_{\tau}\Np)}$
contains the snapshots associated with all sampled parameters $\prmh$ and time steps, while, for any $\tau=1,\ldots,N_{\tau}$, the matrix
$\mathcal{S}_{\tau}\in\mathbb{R}^{\Nf\times \Np}$ 
collects the full model solutions
at fixed time $t^{\tau}$. 
\begin{figure}[H]
\centering
\begin{tikzpicture}
    \begin{groupplot}[
      group style={group size=2 by 1,
                   horizontal sep=2cm},
      width=7cm, height=5cm
    ]
    \nextgroupplot[xlabel={index},
                   ylabel={singular values},
                   axis line style = thick,
                   grid=both,
                   minor tick num=2,
                   max space between ticks=20,
                   grid style = {gray,opacity=0.2},
                   ymode=log,
                   every axis plot/.append style={ultra thick},
                   xmin = 1,
                   ymin = 1e-14, ymax = 1,
                   legend style={at={(0.37,0.25)},anchor=north},
                   ylabel near ticks, %yticklabel pos=right,
                   xlabel style={font=\footnotesize},
                   ylabel style={font=\footnotesize},
                   x tick label style={font=\footnotesize},
                   y tick label style={font=\footnotesize},
                   legend style={font=\tiny}
                   ]
        \addplot[color=black] table[x=Index,y=Values] {figures/data/SW1D_singular_values/full_singular_values_SW1D.txt};
        \addplot[color=red] table[x=Index,y=Values] {figures/data/SW1D_singular_values/avg_singular_values_SW1D.txt};
        \legend{global,local};
        \node [text width=1em,anchor=north west] at (rel axis cs: 0.08,1.05) {\subcaption{\label{fig:singular_values_SW1D_a}}};
        \coordinate (spypoint) at (axis cs:2,0.01);
    \nextgroupplot[xlabel={time $\left [ s \right ]$},
                   ylabel={$\epsilon$-rank},
                   axis line style = thick,
                   grid=both,
                   minor tick num=2,
                   grid style = {gray,opacity=0.2},
                   every axis plot/.append style={ultra thick},
                   legend style={at={(1.16,1)},anchor=north},
                   xmin = 0, xmax = 7,
                   ymin = 0, ymax = 60,
                   xlabel style={font=\footnotesize},
                   ylabel style={font=\footnotesize},
                   x tick label style={font=\footnotesize},
                   y tick label style={font=\footnotesize},
                   legend style={font=\tiny}]
        \addplot+[] table[x=Time,y=Rank] {figures/data/SW1D_epsilon_rank/0.1_epsilon_rank_SW1D.txt};
        \addplot+[] table[x=Time,y=Rank] {figures/data/SW1D_epsilon_rank/0.001_epsilon_rank_SW1D.txt};
        \addplot+[] table[x=Time,y=Rank] {figures/data/SW1D_epsilon_rank/1e-05_epsilon_rank_SW1D.txt};
        \addplot+[] table[x=Time,y=Rank] {figures/data/SW1D_epsilon_rank/1e-07_epsilon_rank_SW1D.txt};
        \addplot+[] table[x=Time,y=Rank] {figures/data/SW1D_epsilon_rank/1e-09_epsilon_rank_SW1D.txt};
        \legend{$10^{-1}$,$10^{-3}$,$10^{-5}$,$10^{-7}$,$10^{-9}$};
        \node [text width=1em,anchor=north west] at (rel axis cs: 0.02,1.05) {\subcaption{\label{fig:singular_values_SW1D_b}}};
    \end{groupplot}
    \node[pin={[pin distance=3.25cm]368:{%
        \begin{tikzpicture}[baseline,trim axis right]
            \begin{axis}[
                    axis line style = thick,
                    grid=both,
                    minor tick num=2,
                    grid style = {gray,opacity=0.2},
                    every axis plot post/.append style={ultra thick},
                    %tiny,
                    ymode=log,
                    xmin=0,xmax=22,
                    ymin=0.0001,ymax=1,
                    width=4cm,
                    legend style={at={(1.4,1)},anchor=north},
                    legend cell align=left,
                    xlabel style={font=\footnotesize},
                    ylabel style={font=\footnotesize},
                    x tick label style={font=\footnotesize},
                    y tick label style={font=\footnotesize},
                    legend style={font=\tiny}
                ]
                \addplot[color=black] table[x=Index,y=Values] {figures/data/SW1D_singular_values/full_singular_values_SW1D.txt};
                \addplot[color=red] table[x=Index,y=Values] {figures/data/SW1D_singular_values/avg_singular_values_SW1D.txt};
            \end{axis}
        \end{tikzpicture}%
    }},draw,circle,minimum size=1cm] at (spypoint) {};
\end{tikzpicture}
\caption{SWE-1D: \bl{(a)} Singular values of the global snapshots matrix $\mathcal{S}$ and time average of the singular values of the local trajectories matrix $\mathcal{S}_{\tau}$. The singular values are normalized using the largest singular value for each case. \bl{(b)} $\epsilon$-rank of the local trajectories matrix $\mathcal{S}_{\tau}$ for different values of $\epsilon$.}
\label{fig:singular_values_SWE1D}
\end{figure}
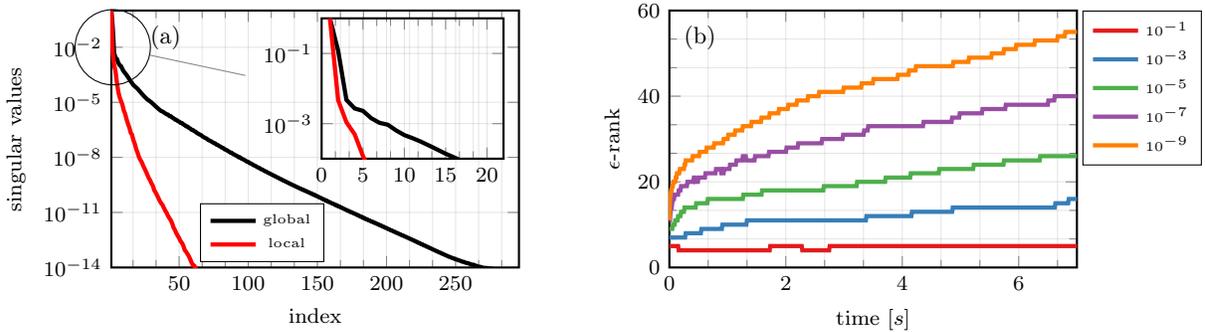
In Figure \ref{fig:singular_values_SW1D_a}, we compare the normalized singular values of $\Scal$ and $\Scal_{\tau}$, averaged over time for the latter. Although, in both cases, the exponential decay of the spectrum suggests the existence of reduced approximation spaces, the decay of the singular values of the averaged $\mathcal{S}_{\tau}$ is roughly $5$ times faster than that of $\mathcal{S}$. This difference suggests that a low-rank dynamical approach may be beneficial to reduce the computational cost and to increase the accuracy of the solution of the reduced model compared to a  method with a global basis. Furthermore, the evolution of the numerical rank of $\mathcal{S}_{\tau}$ over time, reported in Figure \ref{fig:singular_values_SW1D_b}, shows a rapid growth during the first steps, followed by a mild increase in the remaining part of the simulation. This is compatible with the observations, made in Section \ref{sec:criterion_rank_update}, about the behavior of the singular value spectrum for advection dominated problems.

In order to compare the performances of local and global model order reduction, we consider, as
global reduced method, the complex SVD approach \cite{peng2016symplectic}
with reduced dimension $\Nr\in\left \{ 10, 20, 30, 40, 60, 80 \right \}$. This is used to generate a symplectic reduced basis from the solution of the high-fidelity model \eqref{eq:SWE}
obtained every $10$ time steps and
%in the parameter domain %$\Sprm_{\text{CSVD}}$, obtained
by uniformly sampling $\Sprm$ with $4$ samples per dimension. The reduced system is solved using the implicit midpoint rule with the same time step $\Delta t$ used for the full order model. The quadratic operator, describing the evolution of \eqref{eq:SWE}, is reduced by using the approach described in Section \ref{sec:nonlinear} and the reduced operators are computed once during the offline stage.

Concerning the adaptive dynamical reduced model, we evaluate the initial condition \eqref{eq:init_cond_SWE1D} at all values $\prmh\in\Sprmh$ and compute the matrix $\mathcal{S}_1\in\mathbb{R}^{\Nf\times \Np}$ having as columns each of the evaluations. As initial condition for the reduced system \eqref{eq:UZred}, we use
\begin{equation}\label{eqn:initialization_local_low_rank}
    \begin{cases}
        U(0) = U_0,\\
        Z(0) = U_0^T\mathcal{S}_1,
    \end{cases}
\end{equation}
where $U_0\in\mathbb{R}^{\Nf\times \Nr_1}$ is obtained using the complex SVD applied to the snapshot matrix $\mathcal{S}_1$.
System \eqref{eq:UZred} is then evolved using the 2-stage partitioned Runge-Kutta method described in \eqref{lem:2_stage_PRK}.
For the following numerical experiments, we consider $\Nr_1\in\left \{ 6,8,10,12 \right \}$ as initial dimensions of the approximating reduced manifolds. As control parameters
for the rank update criterion of \Cref{algo:rank-update}, we fix the value $c=1.2$ and study examples with $r\in\left\{ 1.02, 1.05, 1.1, 1.2 \right \}$. Moreover, we examine the case in which the rank-updating algorithm is never triggered, i.e., the basis $U(t)$ evolves in time but its dimension is fixed ($\Nrht=\Nrh_1$ for all $\tau$). In the adaptive case, the error indicator $\mathbf{E}_{\tau}$ in \eqref{eqn:error_equation} is computed every $100$ iterations \bl{using a coarse mesh with $500$ equispaced intervals} on the subset \bl{$\widetilde{\eta}_h$} 
%$\Sprm_{I}\subset\Sprmh$
obtained by sampling $5$ parameters per dimension from $\Sprmh$.

In Figure \ref{fig:error_final_SWE1D}, we compare the global reduced model, the dynamical models for different values of $r$, and the high-fidelity model in terms of total runtime and accuracy at the final time $T$ by monitoring the error \eqref{eqn:error_metric}. The results show that, as we increase the dimension of the global reduced basis, the 
global reduced model provides accurate approximations but the runtime becomes larger than the one required to solve the high-fidelity problem. Hence, the global method loses the efficiency. 
The adaptive dynamical reduced approach outperforms the global reduced method by reaching comparable levels of accuracy at a computational time which is one order of magnitude smaller than the one required by the global reduction. Compared to the high-fidelity solver, the adaptive dynamical reduced method achieves an
accuracy of $E(T)=2.55\cdot10^{-5}$ with a speedup up of $42$, in the best-case scenario. 
For this numerical experiment, the effectiveness of the rank update algorithm is limited by the error introduced in the approximation of the initial condition via a reduced basis. While the error
is reduced from a factor of $4$ in the case of $\Nr_1=8$ to a factor of $20$ in the case of $\Nr_1=12$, compared to the non adaptive method, the accuracy is not significantly improved when $\Nr_1=6$. We note that, when the adaptive algorithm is effective, the additional computational cost associated with the evaluation of the error indicator and the evolution of a larger basis is balanced by a considerable error reduction.

\begin{figure}[H]
\centering
\begin{tikzpicture}[spy using outlines={rectangle, width=4.15cm, height=5cm, magnification=1.45, connect spies}]
    \begin{axis}[xlabel={runtime $\left[s\right]$},
                 ylabel={$E(T)$},
                 axis line style = thick,
                 grid=both,
                 minor tick num=2,
                 grid style = {gray,opacity=0.2},
                 xmode=log,
                 ymode=log,
                 ymax = 0.02, ymin = 0.000012,
                 xmax = 30000, xmin = 80, 
                 every axis plot/.append style={thick},
                 width = 9cm, height = 6cm,
                 legend style={at={(1.4,1)},anchor=north},
                 legend cell align=left,
                 ylabel near ticks, yticklabel pos=right,
                 xlabel style={font=\footnotesize},
                 ylabel style={font=\footnotesize},
                 x tick label style={font=\footnotesize},
                 y tick label style={font=\footnotesize},
                 legend style={font=\tiny}]
        \addplot+[mark=x,color=black,mark size=2pt] table[x=Timing,y=Error]
            {figures/data/SW1D_Pareto/error_final_local_reduced_model_no_indicator_SW1D.txt};
        \addplot+[mark=x,color=red,mark size=2pt,
            every node near coord/.append style={xshift=0.65cm},
            every node near coord/.append style={yshift=-0.2cm},
            nodes near coords, 
            point meta=explicit symbolic,
            every node near coord/.append style={font=\footnotesize}] table[x=Timing,y=Error, meta index=2]
            {figures/data/SW1D_Pareto/error_final_local_reduced_model_indicator_SW1D_1.02_1.2_2.txt};   
        %\addplot+[mark=*,color=blue,mark size=0.7pt] table[x=Timing,y=Error]
        %    {figures/data/SW1D_Pareto/error_final_local_reduced_model_indicator_SW1D_1.05_1.2_2.txt}; 
        %\addplot+[mark=*,color=green,mark size=0.7pt] table[x=Timing,y=Error]
        %    {figures/data/SW1D_Pareto/error_final_local_reduced_model_indicator_SW1D_1.1_1.2_2.txt};   
        \addplot+[mark=x,color=cyan,mark size=2pt] table[x=Timing,y=Error]
            {figures/data/SW1D_Pareto/error_final_local_reduced_model_indicator_SW1D_1.2_1.2_2.txt};
        \addplot+[mark=x,color=violet,mark size=2pt,
            every node near coord/.append style={xshift=0.65cm},
            every node near coord/.append style={yshift=-0.2cm},
            nodes near coords, 
            point meta=explicit symbolic,
            every node near coord/.append style={font=\footnotesize}] table[x=Timing,y=Error, meta index=2]
            {figures/data/SW1D_Pareto/error_final_global_reduced_model_SW1D.txt};
        \addplot+[color=black,dashed] table[x=Timing,y=DummyError]
            {figures/data/SW1D_Pareto/error_final_full_SW1D.txt}; 
        \coordinate (spypoint) at (axis cs:290,0.00025);
        \coordinate (magnifyglass) at (14,0.00015);
        \legend{{Non adaptive},
                {$r=1.02, \quad c=1.2$},
                %$r=1.05, \quad c=1.2$,
                %$r=1.1, \quad c=1.2$,
                {$r=1.2, \quad c=1.2$},
                {Global method},
                {Full model}};
    \end{axis}
    \spy [black] on (spypoint)
   in node[fill=white] at (magnifyglass);
\end{tikzpicture}
\caption{SWE-1D: Error \eqref{eqn:error_metric}, at time $T=7$, as a function of the runtime for the complex SVD method ({\color{violet}{\rule[.5ex]{1em}{1.2pt}}}),  the dynamical RB method ({\color{black}{\rule[.5ex]{1em}{1.2pt}}}) and the adaptive dynamical RB method for different values of the control parameters $r$ and $c$     ({\color{red}{\rule[.5ex]{1em}{1.2pt}}},{\color{cyan}{\rule[.5ex]{1em}{1.2pt}}}). %for the simulations of all the sampled parameters in $\Sprmh$. 
%The runtime is defined as the sum of the lengths of the offline and online phases in case of the complex SVD (global method); while, for the dynamical approached it is the time required to evolve basis and coefficients \eqref{eq:PRK-ZV} plus the time required to compute the error indicator and update the dimension of the approximating manifold in the adaptive case.
For the sake of comparison, we report the runtime required by the high-fidelity solver ({\color{black}{\rule[.5ex]{0.4em}{1.2pt}}} {\color{black}{\rule[.5ex]{0.4em}{1.2pt}}}) to compute the numerical solutions for all values of the parameter $\prmh\in\Sprmh$.}
\label{fig:error_final_SWE1D}
\end{figure}
To better gauge the accuracy properties of the adaptive dynamical reduced basis method, we compare its error with the error given by the high-fidelity solver for the same initial condition.
The solution to the full model, with the projection of \eqref{eq:init_cond_SWE1D} onto the column space of $U_{0}$ as the initial condition,
is the target of the adaptive reduced procedure, which aims at correctly representing the high-fidelity solution space at every time step.
The importance of having a reduced space that accurately reproduces the initial condition can be inferred from
Figure \ref{fig:time_error_SWE1D_6}: the error associated with a poorly resolved initial condition dominates over the remaining sources of error, and adapting the dimension of the reduced basis is not beneficial in terms of accuracy. As noted above, increasing $\Nr_{1}$ not only improves the performance of the non adaptive reduced dynamical procedure but also boosts the potential gain, in terms of relative error reduction, of the adaptive method, as can be seen in Figure \ref{fig:time_error_SWE1D_10}.

In Figures \ref{fig:error_basis_time_SWE1D} we report the growth of the dimension of the reduced basis for different initial dimension $\Nr_1$. For the evolution of the error, we do not notice any significant difference as the parameter $r$ for the adaptive criterion \eqref{eq:ratio_tmp} varies.

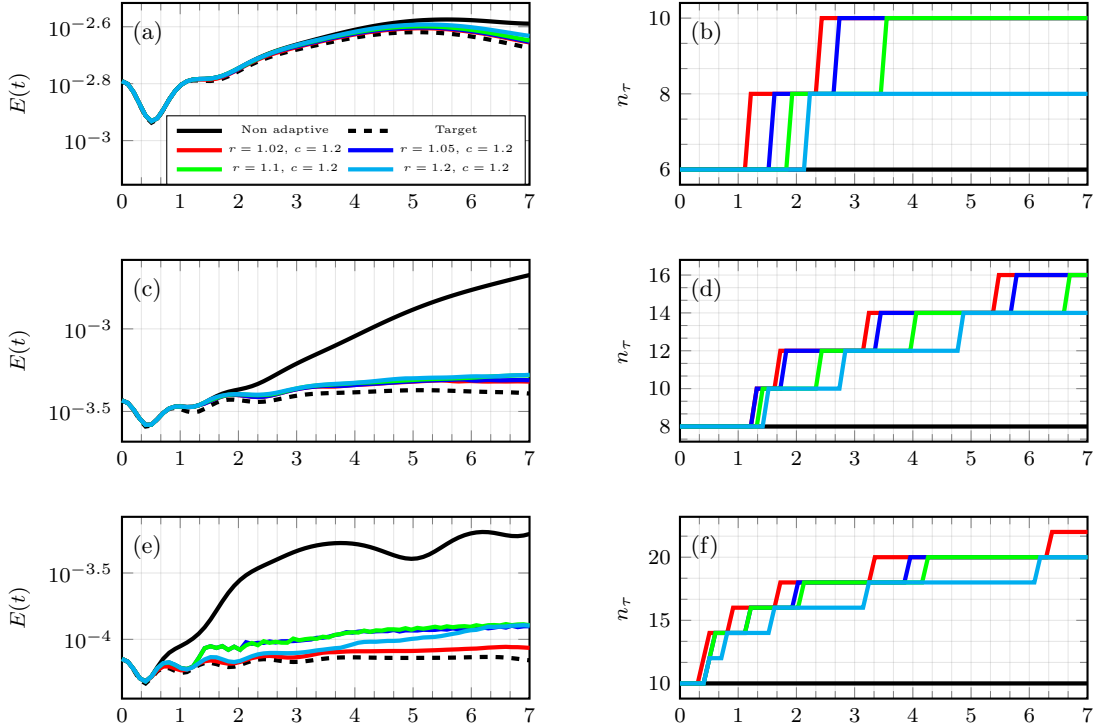
\begin{figure}[H]
\centering
\begin{tikzpicture}
    \begin{groupplot}[
      group style={group size=2 by 3,
                   horizontal sep=2cm},
      width=7cm, height=4cm
    ]
    \nextgroupplot[ylabel={$E(t)$},
                   axis line style = thick,
                   grid=both,
                   minor tick num=2,
                   max space between ticks=20,
                   grid style = {gray,opacity=0.2},
                   every axis plot/.append style={ultra thick},
                   xmin=0, xmax=7,
                   ymin=0.0007,
                   ymode=log,
                   xlabel style={font=\footnotesize},
                   ylabel style={font=\footnotesize},
                   x tick label style={font=\footnotesize},
                   y tick label style={font=\footnotesize},
                   legend style={font=\tiny},
                   legend style={at={(axis cs:0.76,0.0007)},anchor=south west},
                   legend columns = 2,
                   legend style={nodes={scale=0.83, transform shape}}]
        \addplot[color=black] table[x=Time,y=Error_6] {figures/data/SW1D_error_time_basis/error_local_reduced_model_no_indicator_SW1D.txt};
        \addplot+[color=black,dashed] table[x=Time,y=Error_6] {figures/data/SW1D_error_time_basis/error_full_model_corrupted_SW1D.txt};
        \addplot+[color=red] table[x=Timing,y=Error_1_02] {figures/data/SW1D_error_time_basis/error_local_reduced_model_indicator_SW1D_6.txt};
        \addplot+[color=blue] table[x=Timing,y=Error_1_05] {figures/data/SW1D_error_time_basis/error_local_reduced_model_indicator_SW1D_6.txt};
        \addplot+[color=green] table[x=Timing,y=Error_1_10] {figures/data/SW1D_error_time_basis/error_local_reduced_model_indicator_SW1D_6.txt};
        \addplot+[color=cyan] table[x=Timing,y=Error_1_20] {figures/data/SW1D_error_time_basis/error_local_reduced_model_indicator_SW1D_6.txt};
        \node [text width=1em,anchor=north west] at (rel axis cs: 0.01,1.05) {\subcaption{\label{fig:time_error_SWE1D_6}}};
        \legend{{Non adaptive},
                {Target},
                {$r=1.02, \, c=1.2$},
                {$r=1.05, \, c=1.2$},
                {$r=1.1, \, c=1.2$},
                {$r=1.2, \, c=1.2$}};
    \nextgroupplot[ylabel={$n_{\tau}$},
                   axis line style = thick,
                   grid=both,
                   minor tick num=2,
                   max space between ticks=20,
                   grid style = {gray,opacity=0.2},
                   every axis plot/.append style={ultra thick},
                   xmin=0, xmax=7,
                   xlabel style={font=\footnotesize},
                   ylabel style={font=\footnotesize},
                   x tick label style={font=\footnotesize},
                   y tick label style={font=\footnotesize},
                   legend style={font=\tiny}]
        \addplot[color=black] table[x=Time,y=Basis_6] {figures/data/SW1D_error_time_basis/basis_local_reduced_model_no_indicator_SW1D.txt};     \addplot+[color=red] table[x=Timing,y=Basis_1_02] {figures/data/SW1D_error_time_basis/basis_local_reduced_model_indicator_SW1D_6.txt};
        \addplot+[color=blue] table[x=Timing,y=Basis_1_05] {figures/data/SW1D_error_time_basis/basis_local_reduced_model_indicator_SW1D_6.txt}; 
        \addplot+[color=green] table[x=Timing,y=Basis_1_10] {figures/data/SW1D_error_time_basis/basis_local_reduced_model_indicator_SW1D_6.txt}; 
        \addplot+[color=cyan] table[x=Timing,y=Basis_1_20] {figures/data/SW1D_error_time_basis/basis_local_reduced_model_indicator_SW1D_6.txt};
        \node [text width=1em,anchor=north west] at (rel axis cs: 0.01,1.05) {\subcaption{\label{fig:time_basis_SWE1D_6}}};
    \nextgroupplot[ylabel={$E(t)$},
                   axis line style = thick,
                   grid=both,
                   minor tick num=2,
                   max space between ticks=20,
                   grid style = {gray,opacity=0.2},
                   every axis plot/.append style={ultra thick},
                   xmin=0, xmax=7,
                   ymode=log,
                   xlabel style={font=\footnotesize},
                   ylabel style={font=\footnotesize},
                   x tick label style={font=\footnotesize},
                   y tick label style={font=\footnotesize},
                   legend style={font=\tiny}]
        \addplot[color=black] table[x=Time,y=Error_8] {figures/data/SW1D_error_time_basis/error_local_reduced_model_no_indicator_SW1D.txt};
        \addplot+[color=black,dashed] table[x=Time,y=Error_8] {figures/data/SW1D_error_time_basis/error_full_model_corrupted_SW1D.txt};
        \addplot+[color=red] table[x=Timing,y=Error_1_02] {figures/data/SW1D_error_time_basis/error_local_reduced_model_indicator_SW1D_8.txt};
        \addplot+[color=blue] table[x=Timing,y=Error_1_05] {figures/data/SW1D_error_time_basis/error_local_reduced_model_indicator_SW1D_8.txt};
        \addplot+[color=green] table[x=Timing,y=Error_1_10] {figures/data/SW1D_error_time_basis/error_local_reduced_model_indicator_SW1D_8.txt};
        \addplot+[color=cyan] table[x=Timing,y=Error_1_20] {figures/data/SW1D_error_time_basis/error_local_reduced_model_indicator_SW1D_8.txt};
        \node [text width=1em,anchor=north west] at (rel axis cs: 0.01,1.05) {\subcaption{\label{fig:time_error_SWE1D_8}}};
    \nextgroupplot[ylabel={$n_{\tau}$},
                   axis line style = thick,
                   grid=both,
                   minor tick num=2,
                   max space between ticks=20,
                   grid style = {gray,opacity=0.2},
                   every axis plot/.append style={ultra thick},
                   xmin=0, xmax=7,
                   xlabel style={font=\footnotesize},
                   ylabel style={font=\footnotesize},
                   x tick label style={font=\footnotesize},
                   y tick label style={font=\footnotesize},
                   legend style={font=\tiny}]
        \addplot[color=black] table[x=Time,y=Basis_8] {figures/data/SW1D_error_time_basis/basis_local_reduced_model_no_indicator_SW1D.txt};     \addplot+[color=red] table[x=Timing,y=Basis_1_02] {figures/data/SW1D_error_time_basis/basis_local_reduced_model_indicator_SW1D_8.txt};
        \addplot+[color=blue] table[x=Timing,y=Basis_1_05] {figures/data/SW1D_error_time_basis/basis_local_reduced_model_indicator_SW1D_8.txt}; 
        \addplot+[color=green] table[x=Timing,y=Basis_1_10] {figures/data/SW1D_error_time_basis/basis_local_reduced_model_indicator_SW1D_8.txt}; 
        \addplot+[color=cyan] table[x=Timing,y=Basis_1_20] {figures/data/SW1D_error_time_basis/basis_local_reduced_model_indicator_SW1D_8.txt};
        \node [text width=1em,anchor=north west] at (rel axis cs: 0.01,1.05) {\subcaption{\label{fig:time_basis_SWE1D_8}}};
    \nextgroupplot[xlabel={\bl{time $\left [ s \right ]$}},
                   ylabel={$E(t)$},
                   axis line style = thick,
                   grid=both,
                   minor tick num=2,
                   max space between ticks=20,
                   grid style = {gray,opacity=0.2},
                   every axis plot/.append style={ultra thick},
                   xmin=0, xmax=7,
                   ymode=log,
                   xlabel style={font=\footnotesize},
                   ylabel style={font=\footnotesize},
                   x tick label style={font=\footnotesize},
                   y tick label style={font=\footnotesize},
                   legend style={font=\tiny}]
        \addplot[color=black] table[x=Time,y=Error_10] {figures/data/SW1D_error_time_basis/error_local_reduced_model_no_indicator_SW1D.txt};
        \addplot+[color=black,dashed] table[x=Time,y=Error_10] {figures/data/SW1D_error_time_basis/error_full_model_corrupted_SW1D.txt};
        \addplot+[color=red] table[x=Timing,y=Error_1_02] {figures/data/SW1D_error_time_basis/error_local_reduced_model_indicator_SW1D_10.txt};
        \addplot+[color=blue] table[x=Timing,y=Error_1_05] {figures/data/SW1D_error_time_basis/error_local_reduced_model_indicator_SW1D_10.txt};
        \addplot+[color=green] table[x=Timing,y=Error_1_10] {figures/data/SW1D_error_time_basis/error_local_reduced_model_indicator_SW1D_10.txt};
        \addplot+[color=cyan] table[x=Timing,y=Error_1_20] {figures/data/SW1D_error_time_basis/error_local_reduced_model_indicator_SW1D_10.txt};
        \node [text width=1em,anchor=north west] at (rel axis cs: 0.01,1.05) {\subcaption{\label{fig:time_error_SWE1D_10}}};
    \nextgroupplot[xlabel={\bl{time $\left [ s \right ]$}},
                   ylabel={$n_{\tau}$},
                   axis line style = thick,
                   grid=both,
                   minor tick num=2,
                   max space between ticks=20,
                   grid style = {gray,opacity=0.2},
                   every axis plot/.append style={ultra thick},
                   xmin=0, xmax=7,
                   xlabel style={font=\footnotesize},
                   ylabel style={font=\footnotesize},
                   x tick label style={font=\footnotesize},
                   y tick label style={font=\footnotesize},
                   legend style={font=\tiny}]
        \addplot[color=black] table[x=Time,y=Basis_10] {figures/data/SW1D_error_time_basis/basis_local_reduced_model_no_indicator_SW1D.txt};  
        \addplot+[color=red] table[x=Timing,y=Basis_1_02] {figures/data/SW1D_error_time_basis/basis_local_reduced_model_indicator_SW1D_10.txt};
        \addplot+[color=blue] table[x=Timing,y=Basis_1_05] {figures/data/SW1D_error_time_basis/basis_local_reduced_model_indicator_SW1D_10.txt}; 
        \addplot+[color=green] table[x=Timing,y=Basis_1_10] {figures/data/SW1D_error_time_basis/basis_local_reduced_model_indicator_SW1D_10.txt}; 
        \addplot+[color=cyan] table[x=Timing,y=Basis_1_20] {figures/data/SW1D_error_time_basis/basis_local_reduced_model_indicator_SW1D_10.txt}; 
        \node [text width=1em,anchor=north west] at (rel axis cs: 0.01,1.05) {\subcaption{\label{fig:time_basis_SWE1D_10}}};
     \end{groupplot}
\end{tikzpicture}
\caption{SWE-1D: On the left column, we report the evolution of the error $E(t)$ \eqref{eqn:error_metric} for the adaptive and non adaptive dynamical RB methods for different values of the control parameter $r$ and different dimensions $\Nr_1$ of the approximating manifold of the initial condition. The target error is obtained by solving the
full model
%%semidiscrete problem \eqref{eq:SWE} using the high fidelity solver
with initial condition obtained by projecting \eqref{eq:init_cond_SWE1D} onto a symplectic manifold of dimension $\Nr_1$. On the right column, we report the evolution of the dimension of the dynamical reduced basis over time. The adaptive algorithm is driven by the error indicator \eqref{eq:ratio_tmp}, while in the non adaptive setting, the dimension does not change with time.
We consider the cases
$\Nr_1=6$ (Figs.~\bl{(a)-(b)}),
$\Nr_1=8$ (Figs.~\bl{(c)-(d)}),
$\Nr_1=10$ (Figs.~\bl{(e)-(f)}).}
\label{fig:error_basis_time_SWE1D}
\end{figure}
Ideally, within each temporal interval, the reduced solution is close, in the Frobenius norm,
to the best rank-$\Nrt$ approximation of the full model solution.
To verify this property for the adaptive dynamical reduced basis
method,
%Another metric to assess the performance of the adapting procedure is
we monitor the evolution of the error $E_{\perp}$ between the full model solution $\Rcal$, at the current time and for all $\prmh\in\Sprmh$, and its projection onto the space spanned by the current reduced basis evolved following \eqref{eq:U}, namely
%\begin{equation*}%\label{eqn:error_basis_projection}
$E_{\perp}(t)=\norm{\Rcal(t) - U(t) U(t)^{\top}\Rcal(t)}$. %, for any $t\in\Tcal$.

%and whose dimension is updated according to the error indicator.
In Figure \ref{fig:SW1D_projection_error},
the projection error is shown for different values of $\Nr_1$ (Figures \ref{fig:projection_error_SWE1D_8} and \ref{fig:projection_error_SWE1D_10}) and the corresponding evolution of the reduced basis dimension is reported (Figures \ref{fig:basis_evolution_SWE1D_8} and \ref{fig:basis_evolution_SWE1D_10}).
We notice that, when the dimension of the basis $U$ is not adapted, the projection error tends to increase in time. This can be ascribed to the fact that the effective rank of the high-fidelity solution is growing and the reduced basis is no longer large enough to capture the rank-increasing solution.
Adapting $\Nr_{\tau}$ during the simulation results in a zero-growth scenario, with local negative peaks when the basis is enlarged.
This indicates that the strategy of enlarging the reduced manifold in the direction of the larger error (see \Cref{sec:rank-update}) yields a considerable improvement of the approximation.
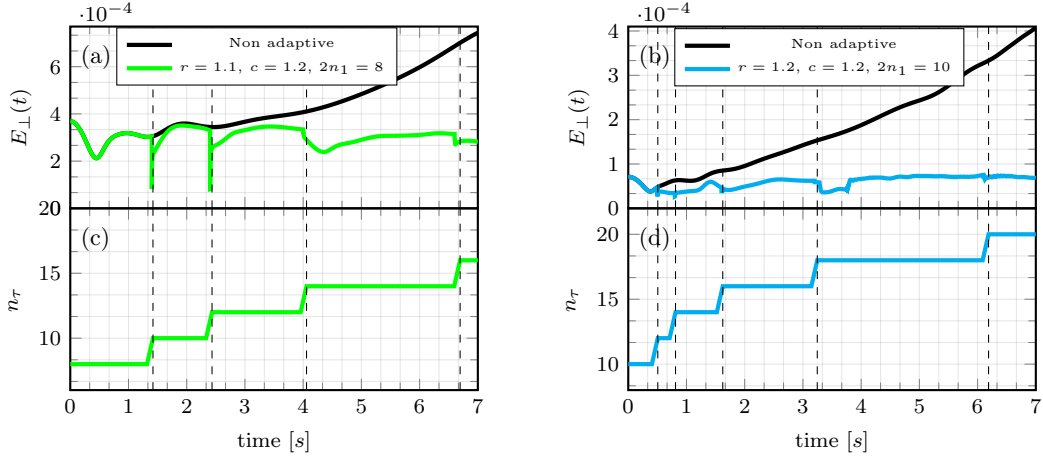
\begin{figure}[H]
\centering
\begin{tikzpicture}
    \begin{groupplot}[
      group style={group size=2 by 2,
                   horizontal sep=2cm},
      width=7cm, height=4cm
    ]
    \nextgroupplot[ylabel={$E_{\perp}(t)$},
                   axis line style = thick,
                   grid=both,
                   minor tick num=2,
                   max space between ticks=20,
                   grid style = {gray,opacity=0.2},
                   every axis plot/.append style={ultra thick},
                   xmin=0, xmax=7,
                   enlargelimits=false,
                   xticklabel=\empty,
                   legend style={at={(axis cs:0.79,0.00052)},anchor=south west},
                   xlabel style={font=\footnotesize},
                   ylabel style={font=\footnotesize},
                   x tick label style={font=\footnotesize},
                   y tick label style={font=\footnotesize},
                   legend style={font=\tiny}]
        \addplot[color=black] table[x=Time,y=Error] {figures/data/SW1D_projection_error/projection_error_8_9999999_SW1D.txt};
        \addplot+[color=green] table[x=Time,y=Error] {figures/data/SW1D_projection_error/projection_error_8_1.1_SW1D.txt};
        \addplot+[thin,color=black,ycomb,dashed] table[x=Time, y expr=0.000385*\thisrow{Spikes}] {figures/data/SW1D_projection_error/nbasis_8_1.1_SW1D.txt};
        \node [text width=1em,anchor=north west] at (rel axis cs: 0.01,1.05) {\subcaption{\label{fig:projection_error_SWE1D_8}}};
        \legend{{Non adaptive},
                {$r=1.1, \, c=1.2, \, \Nr_1=8$}};
    \nextgroupplot[ylabel={$E_{\perp}(t)$},
                   axis line style = thick,
                   grid=both,
                   minor tick num=2,
                   max space between ticks=20,
                   grid style = {gray,opacity=0.2},
                   every axis plot/.append style={ultra thick},
                   xmin=0, xmax=7,
                   enlargelimits=false,
                   xticklabel=\empty,
                   legend style={at={(axis cs:0.79,0.000275)},anchor=south west},
                   xlabel style={font=\footnotesize},
                   ylabel style={font=\footnotesize},
                   x tick label style={font=\footnotesize},
                   y tick label style={font=\footnotesize},
                   legend style={font=\tiny}]
        \addplot[color=black] table[x=Time,y=Error] {figures/data/SW1D_projection_error/projection_error_10_9999999_SW1D.txt};
        \addplot+[color=cyan] table[x=Time,y=Error] {figures/data/SW1D_projection_error/projection_error_10_1.2_SW1D.txt};
        \addplot+[thin,color=black,ycomb,dashed] table[x=Time, y expr=0.000205*\thisrow{Spikes}] {figures/data/SW1D_projection_error/nbasis_10_1.2_SW1D.txt};
        \node [text width=1em,anchor=north west] at (rel axis cs: 0.01,1.05) {\subcaption{\label{fig:projection_error_SWE1D_10}}};
        \legend{{Non adaptive},
                {$r=1.2, \, c=1.2, \, \Nr_1=10$}};
    \nextgroupplot[xlabel={time $\left [ s \right ]$},
                   ylabel={$n_{\tau}$},
                   axis line style = thick,
                   grid=both,
                   minor tick num=2,
                   max space between ticks=20,
                   grid style = {gray,opacity=0.2},
                   every axis plot/.append style={ultra thick},
                   xmin=0, xmax=7,
                   ymin=6, ymax=20,
                   xlabel style={font=\footnotesize},
                   ylabel style={font=\footnotesize},
                   x tick label style={font=\footnotesize},
                   y tick label style={font=\footnotesize},
                   legend style={font=\tiny},
                   yshift=1cm]
        \addplot[color=green] table[x=Time,y=Basis] {figures/data/SW1D_projection_error/nbasis_8_1.1_SW1D.txt};
        \addplot+[thin,color=black,ycomb,dashed] table[x=Time, y expr=12*\thisrow{Spikes}] {figures/data/SW1D_projection_error/nbasis_8_1.1_SW1D.txt};
        \node [text width=1em,anchor=north west] at (rel axis cs: 0.01,1.05) {\subcaption{\label{fig:basis_evolution_SWE1D_8}}};
    \nextgroupplot[xlabel={time $\left [ s \right ]$},
                   ylabel={$n_{\tau}$},
                   axis line style = thick,
                   grid=both,
                   minor tick num=2,
                   max space between ticks=20,
                   grid style = {gray,opacity=0.2},
                   every axis plot/.append style={ultra thick},
                   xmin=0, xmax=7,
                   ymin=8, ymax=22,
                   xlabel style={font=\footnotesize},
                   ylabel style={font=\footnotesize},
                   x tick label style={font=\footnotesize},
                   y tick label style={font=\footnotesize},
                   legend style={font=\tiny},
                   yshift=1cm]
        \addplot[color=cyan] table[x=Time,y=Basis] {figures/data/SW1D_projection_error/nbasis_10_1.2_SW1D.txt};
        \addplot+[thin,color=black,ycomb,dashed] table[x=Time, y expr=12*\thisrow{Spikes}] {figures/data/SW1D_projection_error/nbasis_10_1.2_SW1D.txt};
        \node [text width=1em,anchor=north west] at (rel axis cs: 0.01,1.05) {\subcaption{\label{fig:basis_evolution_SWE1D_10}}};
    \end{groupplot}
\end{tikzpicture}
\caption{SWE-1D: In Figs. \bl{(a)} and \bl{(b)}, we report the evolution of the projection error $E_{\perp}(t)$ for different values of the initial dimension $\Nr_1$ of the reduced manifold.
%The error $E_{\perp}$ represents the inaccuracy introduced by projecting the solution obtained via the high-fidelity solver $\mathcal{R}(t)$, $\forall \prmh\in\Sprmh$, on the space spanned by the dynamical basis $V(t)$ computed in \eqref{eq:PRK-ZV}.
In Figs.\bl{(c)} and \bl{(d)}, we report the corresponding evolution of the dimension of the reduced manifolds.}
\label{fig:SW1D_projection_error}
\end{figure}    

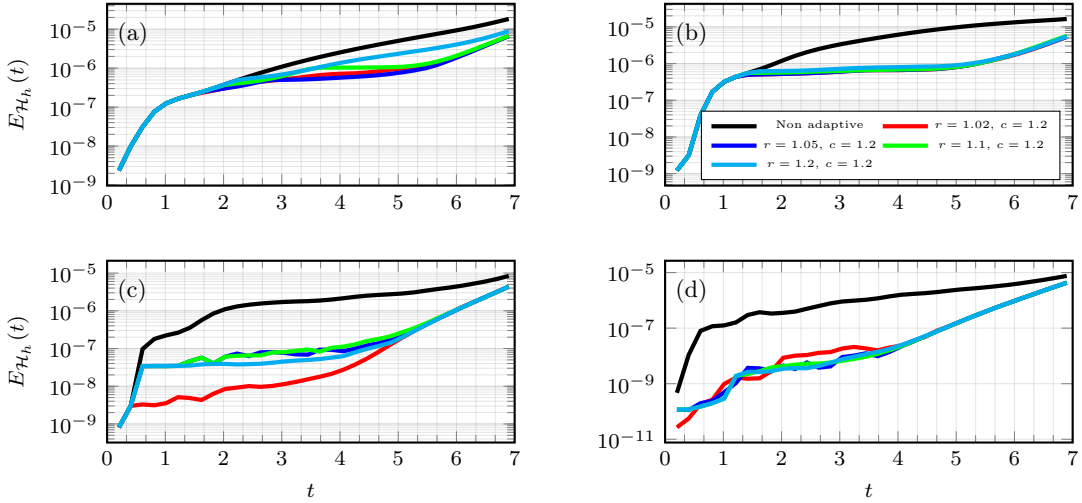
\begin{figure}[H]
\centering
\begin{tikzpicture}
    \begin{groupplot}[
      group style={group size=2 by 4,
                   horizontal sep=2cm},
      width=7cm, height=4cm
    ]
    \nextgroupplot[ylabel={$E_{\mathcal{H}_h}(t)$},
                   axis line style = thick,
                   grid=both,
                   minor tick num=2,
                   max space between ticks=20,
                   grid style = {gray,opacity=0.2},
                   every axis plot/.append style={ultra thick},
                   xmin=0, xmax=7,
                   ymode=log,
                   xlabel style={font=\footnotesize},
                   ylabel style={font=\footnotesize},
                   x tick label style={font=\footnotesize},
                   y tick label style={font=\footnotesize},
                   legend style={font=\tiny},
                   legend style={at={(axis cs:0.76,0.0007)},anchor=south west},
                   legend columns = 2,
                   legend style={nodes={scale=0.83, transform shape}}]
        \addplot+[color=black] table[x=Timing,y=Error] {figures/data/SW1D_Hamiltonian/error_Hamiltonian_local_reduced_model_no_indicator_SW1D_6.txt};
        \addplot+[color=red] table[x=Timing,y=Error_1_02] {figures/data/SW1D_Hamiltonian/error_Hamiltonian_local_reduced_model_indicator_SW1D_6.txt};
        \addplot+[color=blue] table[x=Timing,y=Error_1_05] {figures/data/SW1D_Hamiltonian/error_Hamiltonian_local_reduced_model_indicator_SW1D_6.txt};
        \addplot+[color=green] table[x=Timing,y=Error_1_10] {figures/data/SW1D_Hamiltonian/error_Hamiltonian_local_reduced_model_indicator_SW1D_6.txt};
        \addplot+[color=cyan] table[x=Timing,y=Error_1_20] {figures/data/SW1D_Hamiltonian/error_Hamiltonian_local_reduced_model_indicator_SW1D_6.txt};
        \node [text width=1em,anchor=north west] at (rel axis cs: 0.01,1.05) {\subcaption{\label{fig:error_Hamiltonian_SW1D_6}}};
    \nextgroupplot[axis line style = thick,
                   grid=both,
                   minor tick num=2,
                   max space between ticks=20,
                   grid style = {gray,opacity=0.2},
                   every axis plot/.append style={ultra thick},
                   xmin=0, xmax=7,
                   ymode=log,
                   xlabel style={font=\footnotesize},
                   ylabel style={font=\footnotesize},
                   x tick label style={font=\footnotesize},
                   y tick label style={font=\footnotesize},
                   legend style={font=\tiny},
                   legend pos = south east,
                   legend columns = 2,
                    legend style={nodes={scale=0.83, transform shape}}]
        \addplot+[color=black] table[x=Timing,y=Error] {figures/data/SW1D_Hamiltonian/error_Hamiltonian_local_reduced_model_no_indicator_SW1D_8.txt};
        \addplot+[color=red] table[x=Timing,y=Error_1_02] {figures/data/SW1D_Hamiltonian/error_Hamiltonian_local_reduced_model_indicator_SW1D_8.txt};
        \addplot+[color=blue] table[x=Timing,y=Error_1_05] {figures/data/SW1D_Hamiltonian/error_Hamiltonian_local_reduced_model_indicator_SW1D_8.txt};
        \addplot+[color=green] table[x=Timing,y=Error_1_10] {figures/data/SW1D_Hamiltonian/error_Hamiltonian_local_reduced_model_indicator_SW1D_8.txt};
        \addplot+[color=cyan] table[x=Timing,y=Error_1_20] {figures/data/SW1D_Hamiltonian/error_Hamiltonian_local_reduced_model_indicator_SW1D_8.txt};
        \node [text width=1em,anchor=north west] at (rel axis cs: 0.01,1.05) {\subcaption{\label{fig:error_Hamiltonian_SW1D_8}}};
        \legend{{Non adaptive},
                {$r=1.02, \, c=1.2$},
                {$r=1.05, \, c=1.2$},
                {$r=1.1, \, c=1.2$},
                {$r=1.2, \, c=1.2$}};
    \nextgroupplot[ylabel={$E_{\mathcal{H}_h}(t)$},
                   xlabel={\bl{time $\left [ s \right ]$}},
                   axis line style = thick,
                   grid=both,
                   minor tick num=2,
                   max space between ticks=20,
                   grid style = {gray,opacity=0.2},
                   every axis plot/.append style={ultra thick},
                   xmin=0, xmax=7,
                   ymode=log,
                   xlabel style={font=\footnotesize},
                   ylabel style={font=\footnotesize},
                   x tick label style={font=\footnotesize},
                   y tick label style={font=\footnotesize},
                   legend style={font=\tiny}]
        \addplot+[color=black] table[x=Timing,y=Error] {figures/data/SW1D_Hamiltonian/error_Hamiltonian_local_reduced_model_no_indicator_SW1D_10.txt};
        \addplot+[color=red] table[x=Timing,y=Error_1_02] {figures/data/SW1D_Hamiltonian/error_Hamiltonian_local_reduced_model_indicator_SW1D_10.txt};
        \addplot+[color=blue] table[x=Timing,y=Error_1_05] {figures/data/SW1D_Hamiltonian/error_Hamiltonian_local_reduced_model_indicator_SW1D_10.txt};
        \addplot+[color=green] table[x=Timing,y=Error_1_10] {figures/data/SW1D_Hamiltonian/error_Hamiltonian_local_reduced_model_indicator_SW1D_10.txt};
        \addplot+[color=cyan] table[x=Timing,y=Error_1_20] {figures/data/SW1D_Hamiltonian/error_Hamiltonian_local_reduced_model_indicator_SW1D_10.txt};
        \node [text width=1em,anchor=north west] at (rel axis cs: 0.01,1.05) {\subcaption{\label{fig:error_Hamiltonian_SW1D_10}}};
    \nextgroupplot[xlabel={\bl{time $\left [ s \right ]$}},
                   axis line style = thick,
                   grid=both,
                   minor tick num=2,
                   max space between ticks=20,
                   grid style = {gray,opacity=0.2},
                   every axis plot/.append style={ultra thick},
                   xmin=0, xmax=7,
                   ymode=log,
                   xlabel style={font=\footnotesize},
                   ylabel style={font=\footnotesize},
                   x tick label style={font=\footnotesize},
                   y tick label style={font=\footnotesize},
                   legend style={font=\tiny}]
        \addplot+[color=black] table[x=Timing,y=Error] {figures/data/SW1D_Hamiltonian/error_Hamiltonian_local_reduced_model_no_indicator_SW1D_12.txt};
        \addplot+[color=red] table[x=Timing,y=Error_1_02] {figures/data/SW1D_Hamiltonian/error_Hamiltonian_local_reduced_model_indicator_SW1D_12.txt};
        \addplot+[color=blue] table[x=Timing,y=Error_1_05] {figures/data/SW1D_Hamiltonian/error_Hamiltonian_local_reduced_model_indicator_SW1D_12.txt};
        \addplot+[color=green] table[x=Timing,y=Error_1_10] {figures/data/SW1D_Hamiltonian/error_Hamiltonian_local_reduced_model_indicator_SW1D_12.txt};
        \addplot+[color=cyan] table[x=Timing,y=Error_1_20] {figures/data/SW1D_Hamiltonian/error_Hamiltonian_local_reduced_model_indicator_SW1D_12.txt};
        \node [text width=1em,anchor=north west] at (rel axis cs: 0.01,1.05) {\subcaption{\label{fig:error_Hamiltonian_SW1D_12}}};
    \end{groupplot}
\end{tikzpicture}
\caption{SWE-1D: Relative error \eqref{eqn:relative_error_Hamiltonian} in the conservation of the discrete Hamiltonian \eqref{eq:SW-1D_Ham}
for the dynamical reduced basis method with initial reduced
dimensions
$\Nr_1=6$ \bl{(a)},
$\Nr_1=8$ \bl{(b)},
$\Nr_1=10$ \bl{(c)} and
$\Nr_1=12$ \bl{(d)}.}
%$\Nr_1=6$ (Fig. \ref{fig:error_Hamiltonian_SW1D_6}),
%$\Nr_1=8$ (Fig. \ref{fig:error_Hamiltonian_SW1D_8}),
%$\Nr_1=10$ (Fig. \ref{fig:error_Hamiltonian_SW1D_10}) and
%$\Nr_1=12$ (Fig. \ref{fig:error_Hamiltonian_SW1D_12}).}
\label{fig:error_Hamiltonian_SWE1D}
\end{figure}
In Figure \ref{fig:error_Hamiltonian_SWE1D} we show the relative error in the conservation of the Hamiltonian for different dimensions of the reduced manifold,
and values of the control parameters $r$ and $c$.
As the Hamiltonian \eqref{eq:SW-1D_Ham} is a cubic quantity, we do not expect exact conservation associated with the proposed partitioned Runge--Kutta temporal integrators. However, the preservation of the symplectic structure both in the reduction and in the discretization yields a good control on the Hamiltonian error, as it can be observed in Figure \ref{fig:error_Hamiltonian_SWE1D}.

\subsubsection{Two-dimensional shallow water equations (SWE-2D)}
We set $\Omega=[-4,4]^2$ as the spatial domain and $\Sprm=\left [ \frac{1}{5}, \frac{1}{2}\right ]\times\left[ \frac{11}{10},\frac{17}{10} \right]$ as the domain of parameters. We consider $10$ uniformly spaced values of the parameter for each dimension of $\Sprm$ to define the discrete subset $\Sprmh$. As initial condition, we consider
\begin{equation}\label{eq:init_cond_SWE2D}
    \begin{cases}
    h^{0}(x,y;\prmh) = 1+\alpha e^{-\beta (x^2+y^2)},\\
    \phi^{0}(x,y;\prmh) = 0,
    \end{cases}
\end{equation}
where $\prmh=(\alpha,\beta)$ represents the natural extension to the two-dimensional setting of the parameter used in the previous example.
%First-order differential operators in \eqref{eq:SWE} are discretized using centered finite differences stencils that are second-order accurate. The resulting discrete matrices are denoted as $D_{x_1}$ and $D_{x_2}$ according to the dimension.
The domain $\Omega$ is partitioned using \bl{$M=51$} points per dimension, so that the resulting mesh width is \bl{$\Delta x= \Delta y = 16\cdot 10^{-2}$}.
%The reduced semi-discrete dynamical system has the canonical Hamiltonian structure for all $t\in\mathcal{T}$ and $\prmh\in\Sprmh$ with 
%\begin{equation*}
%    \nabla\HamN_h(u_h;\prmh) = 
%    \begin{pmatrix}
%    \dfrac{1}{2}\left [ \left ( D_x \phi_h \right )^2 + \left ( D_y \phi_h \right )^2 \right ] + h_h \\
%    -D_x\left ( h \odot D_x\phi_h \right )-D_y\left ( h \odot D_y\phi_h \right )
%    \end{pmatrix},
%\end{equation*}
%where $u_h(t;\prmh):=(h_h,\phi_h)=(h_1,\dots,h_{N^2},\phi_1,\dots,\phi_{N^2})$ represent the degrees of freedom of the high-fidelity model and
%\begin{equation*}
%    \HamN_h(u_h;\prmh) = \dfrac12 \sum_{i,j=1}^{\Nfh}
%    \bigg(h_{i,j} \left[ \left( \dfrac{\phi_{i+1,j}-\phi_{i-1,j}}{2\Delta x} \right)^2 + \left( \dfrac{\phi_{i,j+1}-\phi_{i,j-1}}{2\Delta y} \right)^2 \right ] + h_{i,j}^2\bigg)
%\end{equation*}
%is the discrete Hamiltonian function.
The time domain $\mathcal{T}=\left [ 0, T:=20 \right ]$ is split into $N_{\tau}=10000$ uniform intervals of length $\Delta t = 2\cdot 10^{-3}$. The symplectic implicit midpoint is employed as time integrator in the high-fidelity solver, while
the reduced dynamics \eqref{eq:UZred} is integrated using the 2-stage partitioned RK method.
The spatial and temporal domains considered for this numerical experiment are taken so that the solution of the high-fidelity model is characterized by circular waves that interact and overlap because of the periodic boundary conditions, as shown in Figure \ref{fig:surf_comparison_SW2D}.

The increased complexity of the two-dimensional dynamics is reflected in the behaviour of the spectrum of the matrix snapshots. In Figure \ref{fig:singular_values_SW2D_a}, we show the normalized singular values of the global snapshot matrix $\mathcal{S}\in\mathbb{R}^{\Nf\times(N_{\tau} \Np)}$ and the average of the $N_{\tau}$ local-in-time snapshot matrices $\mathcal{S}_{\tau}\in\mathbb{R}^{\Nf\times \Np}$. The decay of the singular values of the local trajectories is one order of magnitude faster than of the global (in time) snapshots, suggesting that there exists an underlying \emph{local} low-rank structure that can be exploited to improve the efficiency of the reduced model. The evolution of the numerical rank of $\mathcal{S}_{\tau}$, reported in Figure \ref{fig:singular_values_SW2D_b}, indicates that, while the matrix-valued initial condition is exactly represented using an extremely small basis, the full model solution at times $t\geq 2$ requires a relatively large basis to be properly approximated, and hence adapting the dimension of the reduced manifold becomes crucial.
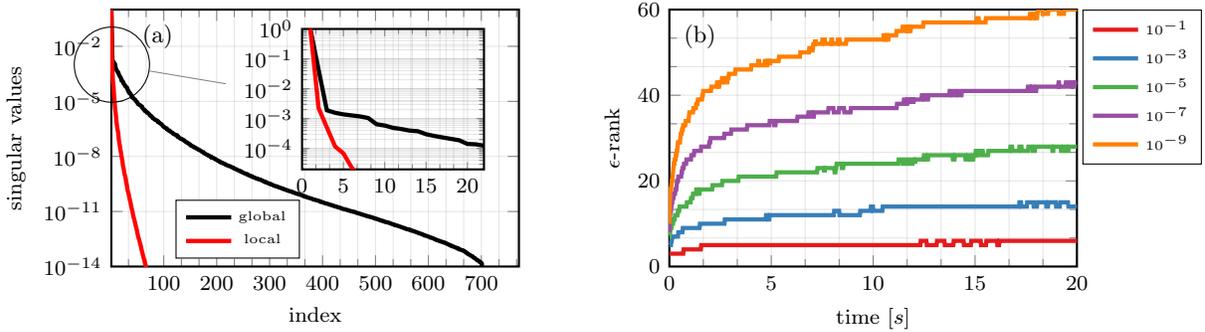
\begin{figure}[H]
\centering
\begin{tikzpicture}
    \begin{groupplot}[
      group style={group size=2 by 1,
                   horizontal sep=2cm},
      width=7cm, height=5cm
    ]
    \nextgroupplot[xlabel={index},
                   ylabel={singular values},
                   axis line style = thick,
                   grid=both,
                   minor tick num=2,
                   max space between ticks=20,
                   grid style = {gray,opacity=0.2},
                   ymode=log,
                   every axis plot/.append style={ultra thick},
                   xmin = 1,
                   ymin = 1e-14, ymax = 1,
                   ylabel near ticks, %yticklabel pos=right,
                   legend style={at={(0.31,0.26)},anchor=north},
                   xlabel style={font=\footnotesize},
                   ylabel style={font=\footnotesize},
                   x tick label style={font=\footnotesize},
                   y tick label style={font=\footnotesize},
                   legend style={font=\tiny}
                   ]
        \addplot[color=black] table[x=Index,y=Values] {figures/data/SW2D_singular_values/full_singular_values_SW2D.txt};
        \addplot[color=red] table[x=Index,y=Values] {figures/data/SW2D_singular_values/avg_singular_values_SW2D.txt};
        \legend{global,local};
        \node [text width=1em,anchor=north west] at (rel axis cs: 0.06,1.05) {\subcaption{\label{fig:singular_values_SW2D_a}}};
        \coordinate (spypoint) at (axis cs:2,0.001);
    \nextgroupplot[xlabel={time $\left [ s \right ]$},
                   ylabel={$\epsilon$-rank},
                   axis line style = thick,
                   grid=both,
                   minor tick num=2,
                   grid style = {gray,opacity=0.2},
                   every axis plot/.append style={ultra thick},
                   xmin = 0, xmax = 20,
                   ymin = 0, ymax = 60,
                   legend style={at={(1.16,1)},anchor=north},
                   xlabel style={font=\footnotesize},
                   ylabel style={font=\footnotesize},
                   x tick label style={font=\footnotesize},
                   y tick label style={font=\footnotesize},
                   legend style={font=\tiny}]
        \addplot+[] table[x=Time,y=Rank] {figures/data/SW2D_epsilon_rank/0.1_epsilon_rank_SW2D.txt};
        \addplot+[] table[x=Time,y=Rank] {figures/data/SW2D_epsilon_rank/0.001_epsilon_rank_SW2D.txt};
        \addplot+[] table[x=Time,y=Rank] {figures/data/SW2D_epsilon_rank/1e-05_epsilon_rank_SW2D.txt};
        \addplot+[] table[x=Time,y=Rank] {figures/data/SW2D_epsilon_rank/1e-07_epsilon_rank_SW2D.txt};
        \addplot+[] table[x=Time,y=Rank] {figures/data/SW2D_epsilon_rank/1e-09_epsilon_rank_SW2D.txt};
        \legend{$10^{-1}$,$10^{-3}$,$10^{-5}$,$10^{-7}$,$10^{-9}$};
        \node [text width=1em,anchor=north west] at (rel axis cs: 0.02,1.05) {\subcaption{\label{fig:singular_values_SW2D_b}}};
    \end{groupplot}
    \node[pin={[pin distance=3.2cm]373:{%
        \begin{tikzpicture}[baseline,trim axis right]
            \begin{axis}[
                    axis line style = thick,
                    grid=both,
                    minor tick num=2,
                    grid style = {gray,opacity=0.2},
                    every axis plot post/.append style={ultra thick},
                    tiny,
                    ymode=log,
                    xmin=0,xmax=22,
                    ymin=0.00002,ymax=1,
                    width=4cm,
                    legend style={at={(1.4,1)},anchor=north},
                    legend cell align=left,
                    xlabel style={font=\footnotesize},
                    ylabel style={font=\footnotesize},
                    x tick label style={font=\footnotesize},
                    y tick label style={font=\footnotesize},
                    legend style={font=\tiny}
                ]
                \addplot[color=black] table[x=Index,y=Values] {figures/data/SW2D_singular_values/full_singular_values_SW2D.txt};
                \addplot[color=red] table[x=Index,y=Values] {figures/data/SW2D_singular_values/avg_singular_values_SW2D.txt};
            \end{axis}
        \end{tikzpicture}%
    }},draw,circle,minimum size=1cm] at (spypoint) {};
\end{tikzpicture}
\caption{SWE-2D: \bl{(a)} Singular values of the global snapshots matrix $\mathcal{S}$ and time average of the singular values of the local trajectories matrix $\mathcal{S}_{\tau}$. The singular values are normalized using the largest singular value for each case. \bl{(b)} $\epsilon$-rank of the local trajectories matrix $\mathcal{S}_{\tau}$ for different values of $\epsilon$.
}
\end{figure}
We employ the complex SVD method to build a global reduced order model, using the same sampling rates in time and parameter space as in the 1D test case. With none of the dimensions considered, i.e., $2n\in\{10,20,40,60,80,120\}$, we obtain results that are both accurate (error smaller than $10^{-1}$) and computationally less expensive
than solving the high-fidelity model.
Hence, for this two-dimensional test,
we only compare the performances of the adaptive and the non-adaptive dynamical reduced basis method in terms of accuracy and computational time. 
As initial condition for the reduced dynamics \eqref{eq:UZred} we consider
the initialization \eqref{eqn:initialization_local_low_rank} where $\mathcal{S}_1$ is given by \eqref{eq:init_cond_SWE2D}.
Moreover, for the adaptive method, we compute the error indicator
every $10$ iterations and on a subset \bl{$\widetilde{\eta}_h$}  of $25$ uniformly sampled parameters. Different combinations of the initial reduced manifold dimension $\Nr_{1}=\{4,6,8\}$, and control parameters $r=\{1.1,1.2,1.3\}$ and $c=\{1.1,1.2,1.3\}$, are considered to study their impact on the accuracy of the method.

Figure \ref{fig:surf_comparison_SW2D} shows the high-fidelity solution for $(\alpha,\beta)=(\frac{1}{3},\frac{17}{10})$ with its adaptive reduced approximation at different times. The results are qualitatively equivalent.
\begin{figure}[H]
\centering
\begin{tikzpicture}

    \pgfplotsset{
        % define the colormap
        colormap={parula}{
            rgb=(0.208100000000000,0.166300000000000,0.529200000000000)
            rgb=(0.211623809523810,0.189780952380952,0.577676190476191)
            rgb=(0.212252380952381,0.213771428571429,0.626971428571429)
            rgb=(0.208100000000000,0.238600000000000,0.677085714285714)
            rgb=(0.195904761904762,0.264457142857143,0.727900000000000)
            rgb=(0.170728571428571,0.291938095238095,0.779247619047619)
            rgb=(0.125271428571429,0.324242857142857,0.830271428571429)
            rgb=(0.0591333333333334,0.359833333333333,0.868333333333333)
            rgb=(0.0116952380952381,0.387509523809524,0.881957142857143)
            rgb=(0.00595714285714286,0.408614285714286,0.882842857142857)
            rgb=(0.0165142857142857,0.426600000000000,0.878633333333333)
            rgb=(0.0328523809523810,0.443042857142857,0.871957142857143)
            rgb=(0.0498142857142857,0.458571428571429,0.864057142857143)
            rgb=(0.0629333333333333,0.473690476190476,0.855438095238095)
            rgb=(0.0722666666666667,0.488666666666667,0.846700000000000)
            rgb=(0.0779428571428572,0.503985714285714,0.838371428571429)
            rgb=(0.0793476190476190,0.520023809523810,0.831180952380952)
            rgb=(0.0749428571428571,0.537542857142857,0.826271428571429)
            rgb=(0.0640571428571428,0.556985714285714,0.823957142857143)
            rgb=(0.0487714285714286,0.577223809523810,0.822828571428572)
            rgb=(0.0343428571428572,0.596580952380952,0.819852380952381)
            rgb=(0.0265000000000000,0.613700000000000,0.813500000000000)
            rgb=(0.0238904761904762,0.628661904761905,0.803761904761905)
            rgb=(0.0230904761904762,0.641785714285714,0.791266666666667)
            rgb=(0.0227714285714286,0.653485714285714,0.776757142857143)
            rgb=(0.0266619047619048,0.664195238095238,0.760719047619048)
            rgb=(0.0383714285714286,0.674271428571429,0.743552380952381)
            rgb=(0.0589714285714286,0.683757142857143,0.725385714285714)
            rgb=(0.0843000000000000,0.692833333333333,0.706166666666667)
            rgb=(0.113295238095238,0.701500000000000,0.685857142857143)
            rgb=(0.145271428571429,0.709757142857143,0.664628571428572)
            rgb=(0.180133333333333,0.717657142857143,0.642433333333333)
            rgb=(0.217828571428571,0.725042857142857,0.619261904761905)
            rgb=(0.258642857142857,0.731714285714286,0.595428571428571)
            rgb=(0.302171428571429,0.737604761904762,0.571185714285714)
            rgb=(0.348166666666667,0.742433333333333,0.547266666666667)
            rgb=(0.395257142857143,0.745900000000000,0.524442857142857)
            rgb=(0.442009523809524,0.748080952380952,0.503314285714286)
            rgb=(0.487123809523809,0.749061904761905,0.483976190476191)
            rgb=(0.530028571428571,0.749114285714286,0.466114285714286)
            rgb=(0.570857142857143,0.748519047619048,0.449390476190476)
            rgb=(0.609852380952381,0.747314285714286,0.433685714285714)
            rgb=(0.647300000000000,0.745600000000000,0.418800000000000)
            rgb=(0.683419047619048,0.743476190476191,0.404433333333333)
            rgb=(0.718409523809524,0.741133333333333,0.390476190476190)
            rgb=(0.752485714285714,0.738400000000000,0.376814285714286)
            rgb=(0.785842857142857,0.735566666666667,0.363271428571429)
            rgb=(0.818504761904762,0.732733333333333,0.349790476190476)
            rgb=(0.850657142857143,0.729900000000000,0.336028571428571)
            rgb=(0.882433333333333,0.727433333333333,0.321700000000000)
            rgb=(0.913933333333333,0.725785714285714,0.306276190476191)
            rgb=(0.944957142857143,0.726114285714286,0.288642857142857)
            rgb=(0.973895238095238,0.731395238095238,0.266647619047619)
            rgb=(0.993771428571429,0.745457142857143,0.240347619047619)
            rgb=(0.999042857142857,0.765314285714286,0.216414285714286)
            rgb=(0.995533333333333,0.786057142857143,0.196652380952381)
            rgb=(0.988000000000000,0.806600000000000,0.179366666666667)
            rgb=(0.978857142857143,0.827142857142857,0.163314285714286)
            rgb=(0.969700000000000,0.848138095238095,0.147452380952381)
            rgb=(0.962585714285714,0.870514285714286,0.130900000000000)
            rgb=(0.958871428571429,0.894900000000000,0.113242857142857)
            rgb=(0.959823809523810,0.921833333333333,0.0948380952380953)
            rgb=(0.966100000000000,0.951442857142857,0.0755333333333333)
            rgb=(0.976300000000000,0.983100000000000,0.0538000000000000)
        },
    }
    
    \begin{groupplot}[
      group style={group size=4 by 2,
                   horizontal sep=2cm},
      width=3cm, height=3cm
    ]
    \captionsetup{labelfont={color=white,bf}}
    \nextgroupplot[ylabel={{\footnotesize $u\left (t;\left( \frac{1}{3}, \frac{17}{10}\right)\right)$}},
                   scale only axis,
                   enlargelimits=false,
                   axis on top,
                   axis equal image,
                   xticklabels={,,},
                   xlabel style={font=\footnotesize},
                   ylabel style={font=\footnotesize},
                   x tick label style={font=\footnotesize},
                   y tick label style={font=\footnotesize},
                   legend style={font=\tiny},
                   ytick={100/8,3/8*100,5/8*100,7/8*100},
                   yticklabels={$-3$,$-1$,$1$,$3$}]
        \addplot graphics[xmin=0,xmax=100,ymin=0,ymax=100] {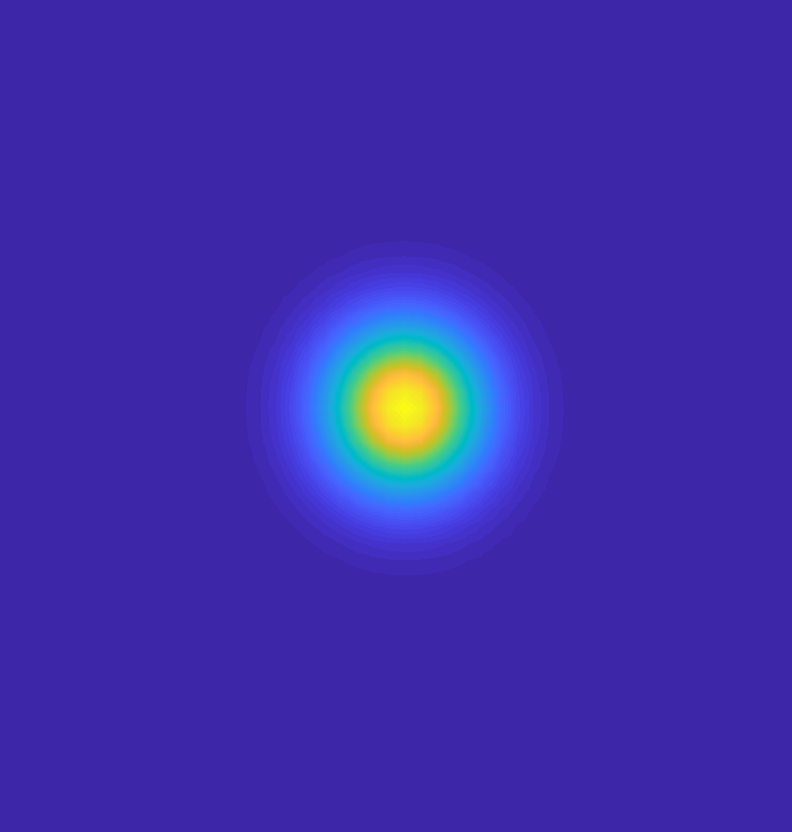};
        \coordinate (start) at (axis cs:0,120);
        \coordinate (first) at (axis cs:50.5,120);
        \coordinate (first1) at (axis cs:50.5,125);
        \node [text width=1em,anchor=north west] at (rel axis cs: 0.02,1.05) {\subcaption{\label{fig:surf_full_SW2D_1}}};
    \nextgroupplot[scale only axis,
                   enlargelimits=false,
                   axis on top,
                   axis equal image,
                   yticklabels={,,},
                   xticklabels={,,},
                   xlabel style={font=\footnotesize},
                   ylabel style={font=\footnotesize},
                   x tick label style={font=\footnotesize},
                   y tick label style={font=\footnotesize},
                   legend style={font=\tiny},
                   xshift=-1.7cm]
        \addplot graphics[xmin=0,xmax=100,ymin=0,ymax=100] {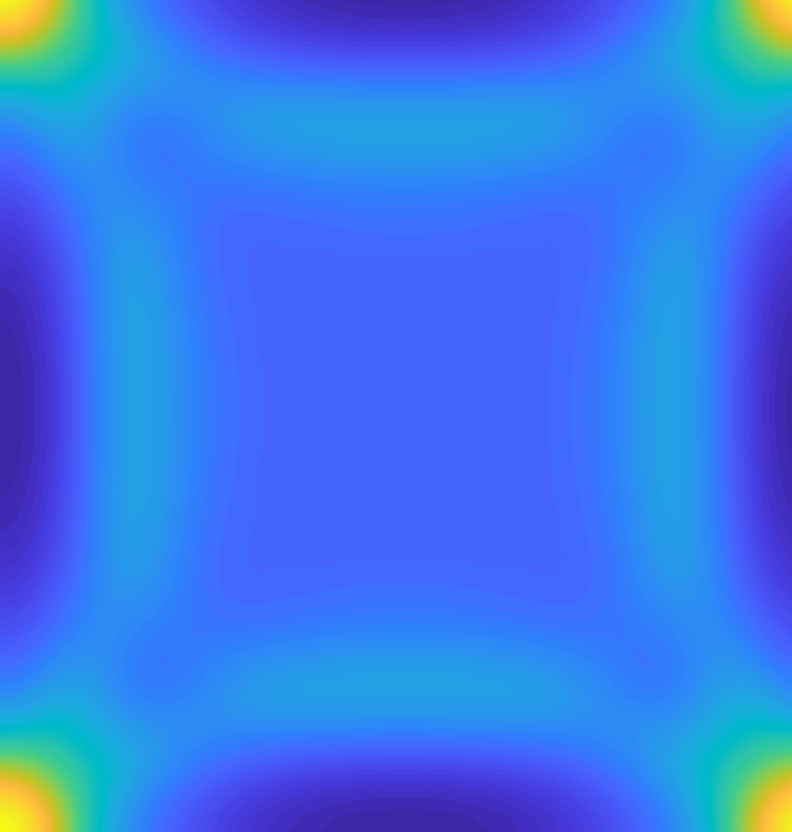};
        \coordinate (second) at (axis cs:50.5,120);
        \coordinate (second2) at (axis cs:50.5,125);
        \node [text width=1em,anchor=north west] at (rel axis cs: 0.02,1.05) {\subcaption{\label{fig:surf_full_SW2D_2}}};
    \nextgroupplot[scale only axis,
                   enlargelimits=false,
                   axis on top,
                   axis equal image,
                   yticklabels={,,},
                   xticklabels={,,},
                   xlabel style={font=\footnotesize},
                   ylabel style={font=\footnotesize},
                   x tick label style={font=\footnotesize},
                   y tick label style={font=\footnotesize},
                   legend style={font=\tiny},
                   xshift=-1.7cm]
        \addplot graphics[xmin=0,xmax=100,ymin=0,ymax=100] {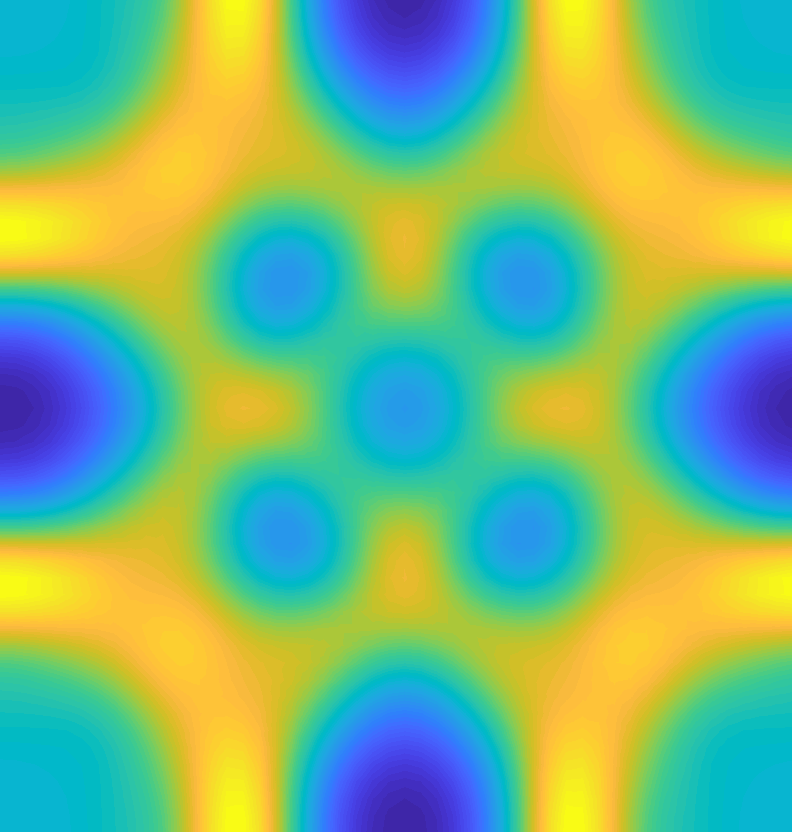};
        \coordinate (third) at (axis cs:50.5,120);
        \coordinate (third2) at (axis cs:50.5,125);
        \node [text width=1em,anchor=north west] at (rel axis cs: 0.02,1.05) {\subcaption{\label{fig:surf_full_SW2D_3}}};
    \nextgroupplot[scale only axis,
                   enlargelimits=false,
                   axis on top,
                   axis equal image,
                   yticklabels={,,},
                   xticklabels={,,},
                   xlabel style={font=\footnotesize},
                   ylabel style={font=\footnotesize},
                   x tick label style={font=\footnotesize},
                   y tick label style={font=\footnotesize},
                   legend style={font=\tiny},
                   xshift=-1.7cm,                   
                   colorbar sampled,
                   colorbar style={height=6.4cm,anchor=colorbar_pos},
                   point meta min=0.9417,
                   point meta max=1.3333]
        \addplot graphics[xmin=0,xmax=100,ymin=0,ymax=100] {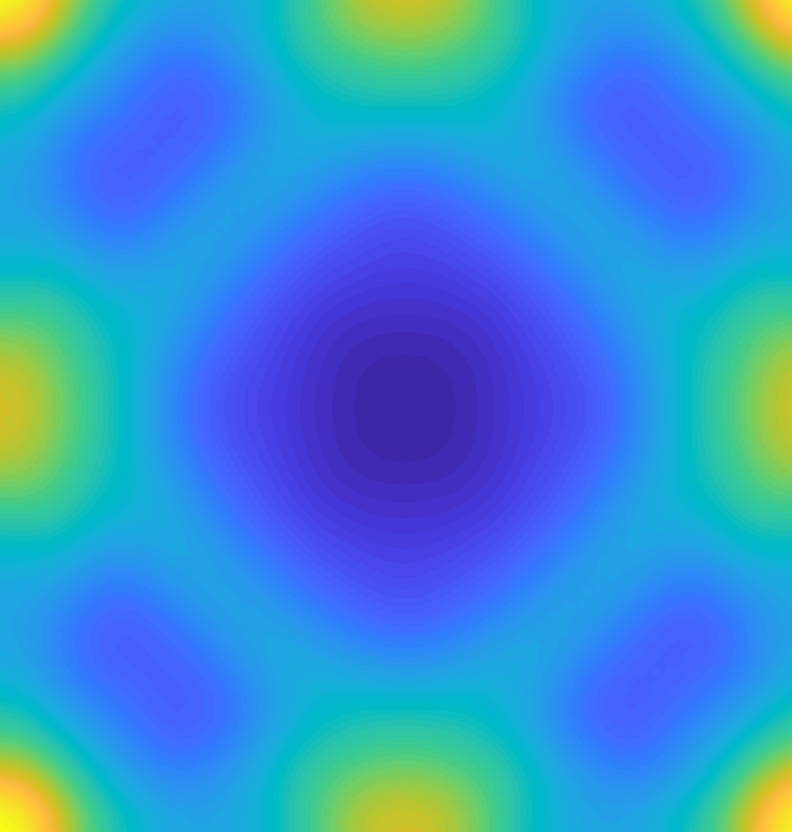};
        \coordinate (end) at (axis cs:100,120);
        \coordinate (colorbar_pos) at (axis cs:110,313);
        \coordinate (fourth) at (axis cs:50.5,120);
        \coordinate (fourth2) at (axis cs:50.5,125);
        \node [text width=1em,anchor=north west] at (rel axis cs: 0.02,1.05) {\subcaption{\label{fig:surf_full_SW2D_4}}};
    \nextgroupplot[ylabel={{\footnotesize{$\sum_{i=1}^{\Nr_{\tau}}U_i(t) Z_i\left (t;\left( \frac{1}{3}, \frac{17}{10}\right)\right)$}}},
                   scale only axis,
                   enlargelimits=false,
                   axis on top,
                   axis equal image,
                   xlabel style={font=\footnotesize},
                   ylabel style={font=\footnotesize},
                   x tick label style={font=\footnotesize},
                   y tick label style={font=\footnotesize},
                   legend style={font=\tiny},
                   yshift=0.6cm,
                   xtick={100/8,3/8*100,5/8*100,7/8*100},
                   xticklabels={$-3$,$-1$,$1$,$3$},
                   ytick={100/8,3/8*100,5/8*100,7/8*100},
                   yticklabels={$-3$,$-1$,$1$,$3$}]
        \addplot graphics[xmin=0,xmax=100,ymin=0,ymax=100] {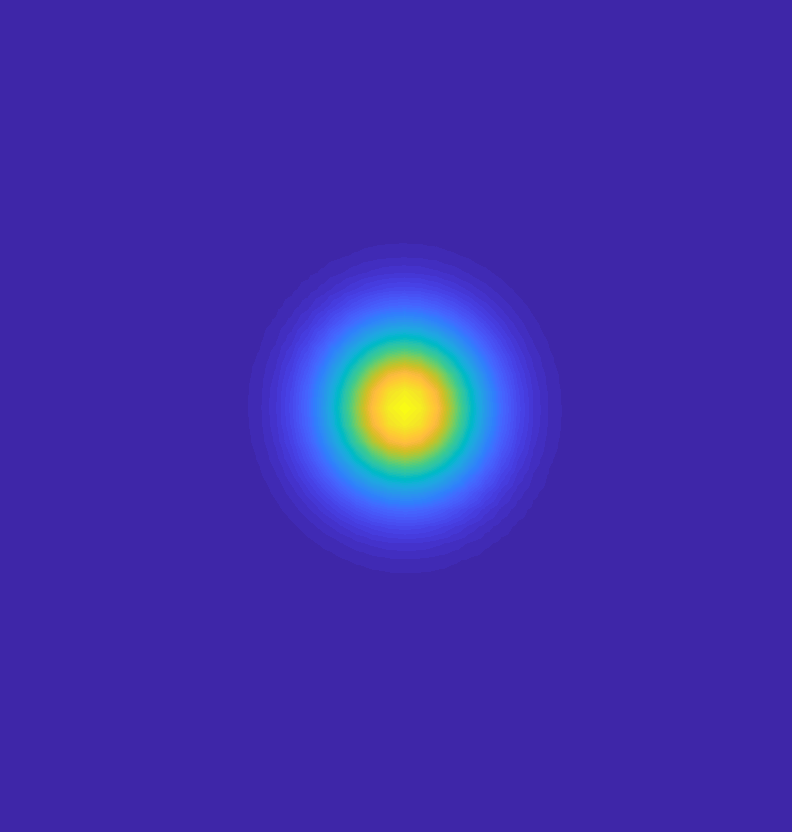};
        \node [text width=1em,anchor=north west] at (rel axis cs: 0.02,1.05) {\subcaption{\label{fig:surf_red_SW2D_1}}};
    \nextgroupplot[scale only axis,
                   enlargelimits=false,
                   axis on top,
                   axis equal image,
                   yticklabels={,,},
                   xlabel style={font=\footnotesize},
                   ylabel style={font=\footnotesize},
                   x tick label style={font=\footnotesize},
                   y tick label style={font=\footnotesize},
                   legend style={font=\tiny},
                   yshift=0.6cm,
                   xtick={100/8,3/8*100,5/8*100,7/8*100},
                   xticklabels={$-3$,$-1$,$1$,$3$}]
        \addplot graphics[xmin=0,xmax=100,ymin=0,ymax=100] {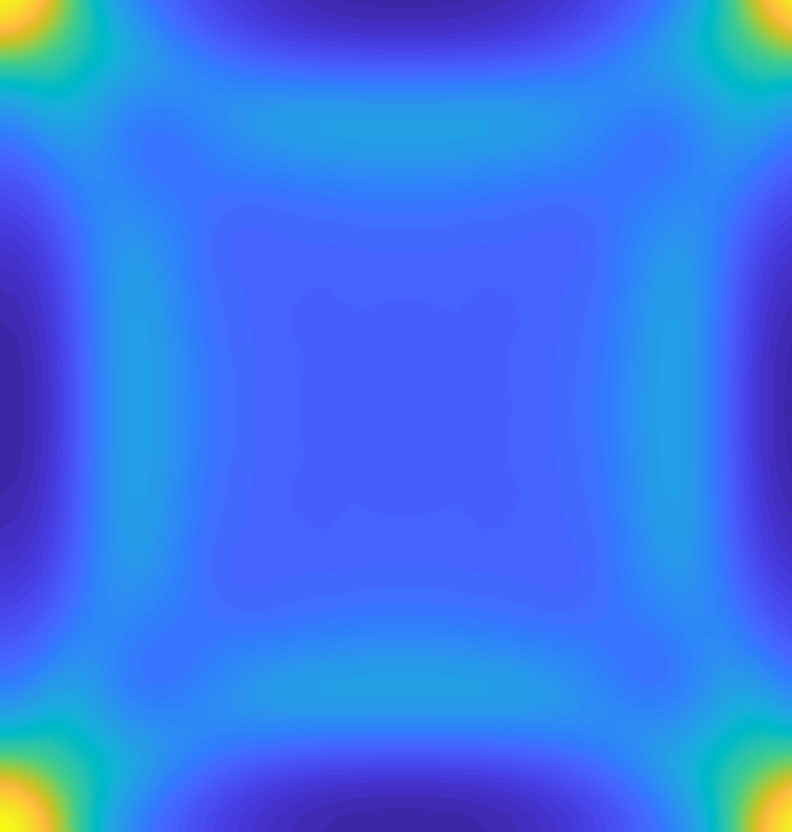};
        \node [text width=1em,anchor=north west] at (rel axis cs: 0.02,1.05) {\subcaption{\label{fig:surf_red_SW2D_2}}};
    \nextgroupplot[scale only axis,
                   enlargelimits=false,
                   axis on top,
                   axis equal image,
                   yticklabels={,,},
                   xlabel style={font=\footnotesize},
                   ylabel style={font=\footnotesize},
                   x tick label style={font=\footnotesize},
                   y tick label style={font=\footnotesize},
                   legend style={font=\tiny},
                   yshift=0.6cm,
                   xtick={100/8,3/8*100,5/8*100,7/8*100},
                   xticklabels={$-3$,$-1$,$1$,$3$}]
        \addplot graphics[xmin=0,xmax=100,ymin=0,ymax=100] {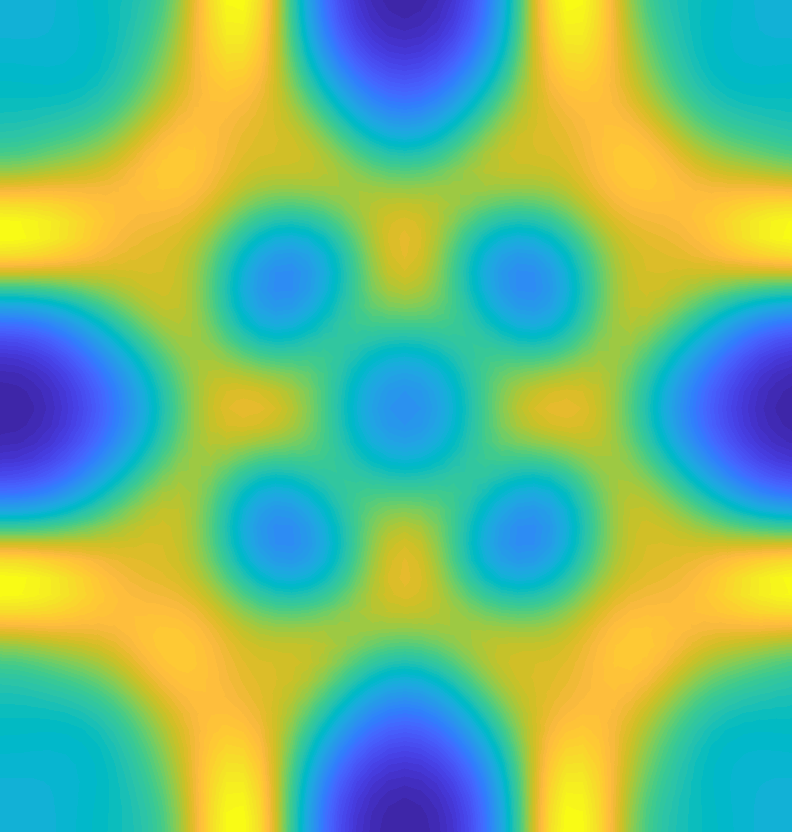};
        \node [text width=1em,anchor=north west] at (rel axis cs: 0.02,1.05) {\subcaption{\label{fig:surf_red_SW2D_3}}};
    \nextgroupplot[scale only axis,
                   enlargelimits=false,
                   axis on top,
                   axis equal image,
                   yticklabels={,,},
                   xlabel style={font=\footnotesize},
                   ylabel style={font=\footnotesize},
                   x tick label style={font=\footnotesize},
                   y tick label style={font=\footnotesize},
                   legend style={font=\tiny},
                   yshift=0.6cm,
                   xtick={100/8,3/8*100,5/8*100,7/8*100},
                   xticklabels={$-3$,$-1$,$1$,$3$}]
        \addplot graphics[xmin=0,xmax=100,ymin=0,ymax=100] {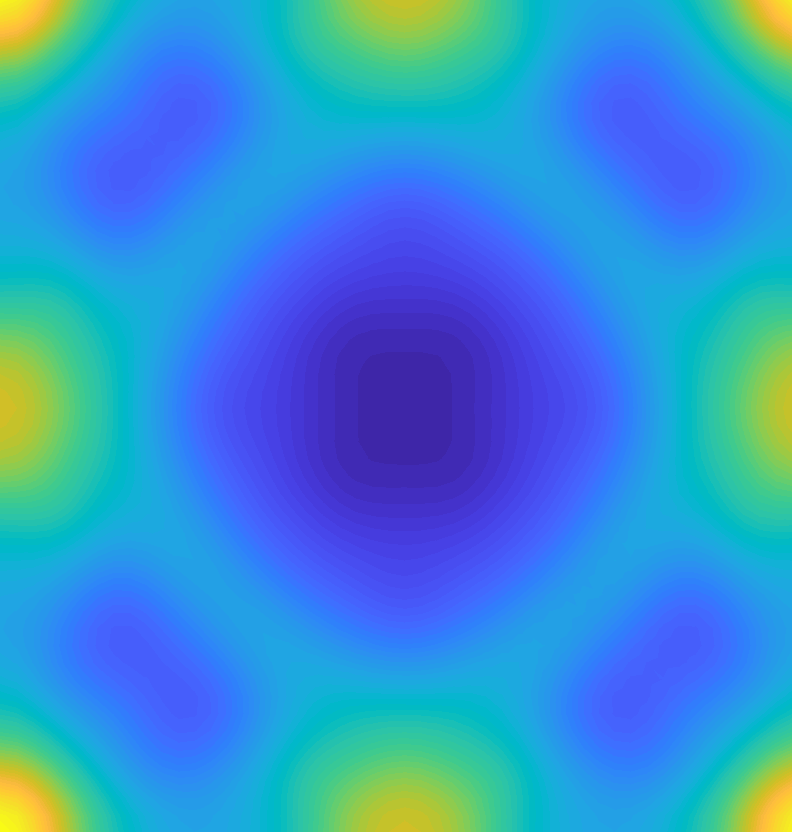};
        \node [text width=1em,anchor=north west] at (rel axis cs: 0.02,1.05) {\subcaption{\label{fig:surf_red_SW2D_4}}};
    \end{groupplot}
    \draw [->,line width=0.25mm] (start) -- (end);
    \node[circle,fill=black,inner sep=0pt,minimum size=3pt] (0) at (first) {};
    \node[inner sep=0pt,minimum size=3pt,label=above:{$t=0s$}] (0) at (first1) {};
    \node[circle,fill=black,inner sep=0pt,minimum size=3pt] (5) at (second) {};
    \node[inner sep=0pt,minimum size=3pt,label=above:{$t=5s$}] (5) at (second2) {};
    \node[circle,fill=black,inner sep=0pt,minimum size=3pt] (15) at (third) {};
    \node[inner sep=0pt,minimum size=3pt,label=above:{$t=15s$}] (15) at (third2) {};
    \node[circle,fill=black,inner sep=0pt,minimum size=3pt] (20) at (fourth) {};
    \node[inner sep=0pt,minimum size=3pt,label=above:{$t=20s$}] (20) at (fourth2) {};
\end{tikzpicture}
\caption{SWE-2D: High-fidelity solution
(Figs.~\bl{(a)-(d)})
%(Figs. \ref{fig:surf_full_SW2D_1}, \ref{fig:surf_full_SW2D_2}, \ref{fig:surf_full_SW2D_3} and \ref{fig:surf_full_SW2D_4})
and adaptive dynamical reduced solution
(Figs.~\bl{(e)-(h)})
%(Figs. \ref{fig:surf_red_SW2D_1}, \ref{fig:surf_red_SW2D_2}, \ref{fig:surf_red_SW2D_3} and \ref{fig:surf_red_SW2D_4})
for the parameter $\left(\alpha,\beta\right)=\left(\frac{1}{3},\frac{17}{10}\right)$ and $t=0,5,15$ and $20s$. In the adaptive reduced approach, we set $r=1.1$, $c=1.3$ and $\Nr_1=6$.}
\label{fig:surf_comparison_SW2D}
\end{figure}
Figure \ref{fig:error_runtime_SW2D} reports the error $E(T)$  vs. the runtime required to compute the solution for all $\prmh\in\Sprmh$ by means of the adaptive and non-adaptive dynamical reduced methods, for different values of $\Nr_1$, $r$ and $c$. 
Observe that the runtime of the high-fidelity solver is $3.29\cdot 10^{5}s$.
% The runtimes are compared to the runtime required by the high-fidelity solver for the same set of parameters.
The results show that both reduction  methods are able to accurately approximate the high-fidelity solution, with speed-ups of $261$ for the non-adaptive approach and $113$ for the adaptive approach. The exceptional efficiency of the dynamical reduced approach in this context is a \bl{result} of the combination of three main factors: the low degree polynomial nonlinearity, the large number of degrees of freedom needed to represent the high-fidelity solution and the compact dimension of the local reduced manifold. Despite the small computational overhead for the adaptive method due to the error estimation, the basis update and the larger approximating spaces used, the adaptive algorithm leads to approximations that are one ($\Nr_1=4$) to two ($\Nr_1=10$) orders of magnitude more accurate than the approximations obtained by the non adaptive method.
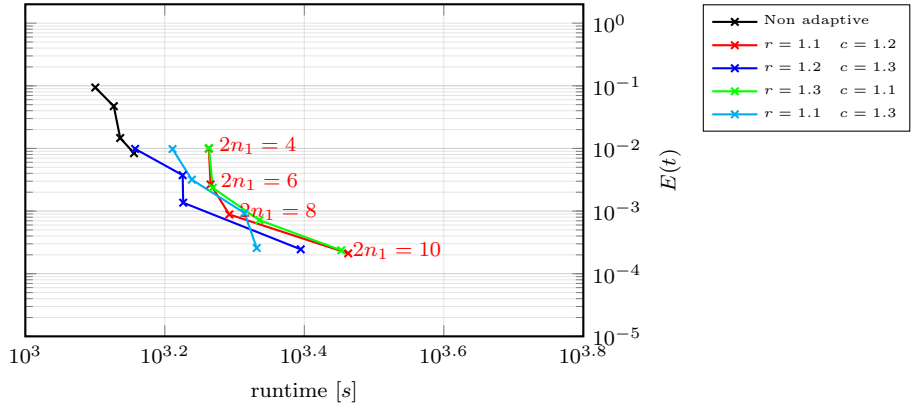
\begin{figure}[H]
\centering
\begin{tikzpicture}[spy using outlines={rectangle, width=4.4cm, height=5cm, magnification=1.9, connect spies}]
    \begin{axis}[xlabel={runtime $\left[s\right]$},
                 ylabel={$E(t)$},
                 axis line style = thick,
                 grid=both,
                 minor tick num=2,
                 grid style = {gray,opacity=0.2},
                 xmode=log,
                 ymode=log,
                 xmin = 1000, xmax = 10^(3.8),
                 ymax = 2, ymin = 0.00001,
                 every axis plot/.append style={thick},
                 width = 9cm, height = 6cm,
                 legend style={at={(1.4,1)},anchor=north},
                 legend cell align=left,
                 ylabel near ticks, yticklabel pos=right,
                 xlabel style={font=\footnotesize},
                 ylabel style={font=\footnotesize},
                 x tick label style={font=\footnotesize},
                 y tick label style={font=\footnotesize},
                 legend style={font=\tiny}]
        \addplot+[mark=x,color=black,size=2pt] table[x=Timing,y=Error]
            {figures/data/SW2D_Pareto/error_final_local_reduced_model_no_indicator_SW2D.txt};
        \addplot+[mark=x,color=red,size=2pt,
            every node near coord/.append style={xshift=0.65cm},
            every node near coord/.append style={yshift=-0.2cm},
            nodes near coords, 
            point meta=explicit symbolic,
            every node near coord/.append style={font=\footnotesize}] table[x=Timing,y=Error, meta index=2]
            {figures/data/SW2D_Pareto/error_final_local_reduced_model_indicator_SW2D_1.1_1.2.txt};  
        \addplot+[mark=x,color=blue,size=2pt] table[x=Timing,y=Error]
            {figures/data/SW2D_Pareto/error_final_local_reduced_model_indicator_SW2D_1.2_1.3.txt}; 
        \addplot+[mark=x,color=green,size=2pt] table[x=Timing,y=Error]
            {figures/data/SW2D_Pareto/error_final_local_reduced_model_indicator_SW2D_1.3_1.1.txt};  
        \addplot+[mark=x,color=cyan,size=2pt] table[x=Timing,y=Error]
            {figures/data/SW2D_Pareto/error_final_local_reduced_model_indicator_SW2D_1.1_1.3.txt};
        %\addplot+[mark=x,color=magenta,size=2pt] table[x=Timing,y=Error]
        %    {figures/data/SW2D_Pareto/error_final_global_reduced_model_SW2D.txt};
        %\addplot+[color=black,dashed] table[x=Timing,y=DummyError]
            %{figures/data/SW2D_Pareto/error_final_full_SW2D.txt}; 
        \legend{Non adaptive,
                $r=1.1 \quad c=1.2$,
                $r=1.2 \quad c=1.3$,
                $r=1.3 \quad c=1.1$,
                $r=1.1 \quad c=1.3$,
                %Global,
                Full Model};
    \end{axis}
\end{tikzpicture}
\caption{SWE-2D: Error \eqref{eqn:error_metric}, at time $T=20$, as a function of the runtime for the dynamical RB method ({\color{black}{\rule[.5ex]{1em}{1.2pt}}}) and the adaptive dynamical RB method for different values of the control parameters $r$ and $c$     ({\color{red}{\rule[.5ex]{1em}{1.2pt}}},{\color{blue}{\rule[.5ex]{1em}{1.2pt}}},{\color{green}{\rule[.5ex]{1em}{1.2pt}}},{\color{cyan}{\rule[.5ex]{1em}{1.2pt}}}) for the simulation of all the sampled parameters in $\Sprmh$.
For comparison, the high-fidelity model runtime is $3.3\cdot 10^{5}s$.}
\label{fig:error_runtime_SW2D}
\end{figure}
The results presented in Figures \ref{fig:error_basis_time_SWE2D} on the evolution of the error $E(t)$ for $2n_1=\{4,6,8\}$, corroborate the conclusions, already drawn from the 1D test case, regarding the effect of a poorly approximated initial condition on the performances of the adapting procedure. The evolution of the basis dimension is reported in Figures \ref{fig:time_basis_SWE2D_4}, \ref{fig:time_basis_SWE2D_6} and \ref{fig:time_basis_SWE2D_8} for different values of $r$, $c$ and $2n_{1}$.
\begin{figure}[H]
\centering
\begin{tikzpicture}
    \begin{groupplot}[
      group style={group size=2 by 3,
                   horizontal sep=2cm},
      width=7cm, height=4cm
    ]
    \nextgroupplot[ylabel={$E(t)$},
                   axis line style = thick,
                   grid=both,
                   minor tick num=2,
                   max space between ticks=20,
                   grid style = {gray,opacity=0.2},
                   every axis plot/.append style={ultra thick},
                   xmin=0, xmax=20,
                   ymode=log,
                   xlabel style={font=\footnotesize},
                   ylabel style={font=\footnotesize},
                   x tick label style={font=\footnotesize},
                   y tick label style={font=\footnotesize},
                   legend style={font=\tiny}]
        \addplot[color=black] table[x=Time,y=Error_4] {figures/data/SW2D_error_time_basis/error_local_reduced_model_no_indicator_SW2D.txt};
        \addplot+[color=red] table[x=Timing,y=Error_1.1_1.2] {figures/data/SW2D_error_time_basis/error_local_reduced_model_indicator_SW2D_4.txt};
        \addplot+[color=blue] table[x=Timing,y=Error_1.2_1.3] {figures/data/SW2D_error_time_basis/error_local_reduced_model_indicator_SW2D_4.txt}; 
        \addplot+[color=green] table[x=Timing,y=Error_1.3_1.1] {figures/data/SW2D_error_time_basis/error_local_reduced_model_indicator_SW2D_4.txt}; 
        \addplot+[color=cyan] table[x=Timing,y=Error_1.1_1.3] {figures/data/SW2D_error_time_basis/error_local_reduced_model_indicator_SW2D_4.txt};
        \addplot+[color=black,dashed] table[x=Time,y=Error_4] {figures/data/SW2D_error_time_basis/error_full_model_corrupted_SW2D.txt};
        \node [text width=1em,anchor=north west] at (rel axis cs: 0.01,1.05) {\subcaption{\label{fig:time_error_SWE2D_4}}};
    \nextgroupplot[ylabel={$\Nr_{\tau}$},
                   axis line style = thick,
                   grid=both,
                   minor tick num=2,
                   max space between ticks=20,
                   grid style = {gray,opacity=0.2},
                   every axis plot/.append style={ultra thick},
                   xmin=0, xmax=7,
                   xlabel style={font=\footnotesize},
                   ylabel style={font=\footnotesize},
                   x tick label style={font=\footnotesize},
                   y tick label style={font=\footnotesize},
                   legend style={font=\tiny}]
        \addplot[color=black] table[x=Time,y=Basis_4] {figures/data/SW2D_error_time_basis/basis_local_reduced_model_no_indicator_SW2D.txt};
        \addplot+[color=red] table[x=Timing,y=Basis_1.1_1.2] {figures/data/SW2D_error_time_basis/basis_local_reduced_model_indicator_SW2D_4.txt};
        \addplot+[color=blue] table[x=Timing,y=Basis_1.2_1.3] {figures/data/SW2D_error_time_basis/basis_local_reduced_model_indicator_SW2D_4.txt}; 
        \addplot+[color=green] table[x=Timing,y=Basis_1.3_1.1] {figures/data/SW2D_error_time_basis/basis_local_reduced_model_indicator_SW2D_4.txt}; 
        \addplot+[color=cyan] table[x=Timing,y=Basis_1.1_1.3] {figures/data/SW2D_error_time_basis/basis_local_reduced_model_indicator_SW2D_4.txt};
        \node [text width=1em,anchor=north west] at (rel axis cs: 0.01,1.05) {\subcaption{\label{fig:time_basis_SWE2D_4}}};
    \nextgroupplot[ylabel={$E(t)$},
                   axis line style = thick,
                   grid=both,
                   minor tick num=2,
                   max space between ticks=20,
                   grid style = {gray,opacity=0.2},
                   every axis plot/.append style={ultra thick},
                   xmin=0, xmax=20,
                   ymax = 4,
                   ymode=log,
                   xlabel style={font=\footnotesize},
                   ylabel style={font=\footnotesize},
                   x tick label style={font=\footnotesize},
                   y tick label style={font=\footnotesize},
                   legend style={font=\tiny},
                   legend style={at={(axis cs:2.3,0.06)},anchor=south west},
                   legend columns = 2,
                   legend style={nodes={scale=0.83, transform shape}}]
        \addplot[color=black] table[x=Time,y=Error_6] {figures/data/SW2D_error_time_basis/error_local_reduced_model_no_indicator_SW2D.txt};
        \addplot+[color=black,dashed] table[x=Time,y=Error_6] {figures/data/SW2D_error_time_basis/error_full_model_corrupted_SW2D.txt};
        \addplot+[color=red] table[x=Timing,y=Error_1.1_1.2] {figures/data/SW2D_error_time_basis/error_local_reduced_model_indicator_SW2D_6.txt};
        \addplot+[color=blue] table[x=Timing,y=Error_1.2_1.3] {figures/data/SW2D_error_time_basis/error_local_reduced_model_indicator_SW2D_6.txt}; 
        \addplot+[color=green] table[x=Timing,y=Error_1.3_1.1] {figures/data/SW2D_error_time_basis/error_local_reduced_model_indicator_SW2D_6.txt}; 
        \addplot+[color=cyan] table[x=Timing,y=Error_1.1_1.3] {figures/data/SW2D_error_time_basis/error_local_reduced_model_indicator_SW2D_6.txt};
        \node [text width=1em,anchor=north west] at (rel axis cs: 0.01,1.05) {\subcaption{\label{fig:time_error_SWE2D_6}}};
        \legend{{Non adaptive},
                {Target},
                {$r=1.1, \, c=1.2$},
                {$r=1.2, \, c=1.3$},
                {$r=1.3, \, c=1.1$},
                {$r=1.1, \, c=1.3$}};
    \nextgroupplot[ylabel={$\Nr_{\tau}$},
                   axis line style = thick,
                   grid=both,
                   minor tick num=2,
                   max space between ticks=20,
                   grid style = {gray,opacity=0.2},
                   every axis plot/.append style={ultra thick},
                   xmin=0, xmax=20,
                   xlabel style={font=\footnotesize},
                   ylabel style={font=\footnotesize},
                   x tick label style={font=\footnotesize},
                   y tick label style={font=\footnotesize},
                   legend style={font=\tiny}]
        \addplot[color=black] table[x=Time,y=Basis_6] {figures/data/SW2D_error_time_basis/basis_local_reduced_model_no_indicator_SW2D.txt};
        \addplot+[color=red] table[x=Timing,y=Basis_1.1_1.2] {figures/data/SW2D_error_time_basis/basis_local_reduced_model_indicator_SW2D_6.txt};
        \addplot+[color=blue] table[x=Timing,y=Basis_1.2_1.3] {figures/data/SW2D_error_time_basis/basis_local_reduced_model_indicator_SW2D_6.txt}; 
        \addplot+[color=green] table[x=Timing,y=Basis_1.3_1.1] {figures/data/SW2D_error_time_basis/basis_local_reduced_model_indicator_SW2D_6.txt}; 
        \addplot+[color=cyan] table[x=Timing,y=Basis_1.1_1.3] {figures/data/SW2D_error_time_basis/basis_local_reduced_model_indicator_SW2D_6.txt};
        \node [text width=1em,anchor=north west] at (rel axis cs: 0.01,1.05) {\subcaption{\label{fig:time_basis_SWE2D_6}}};
    \nextgroupplot[xlabel={time $\left [ s \right ]$},
                   ylabel={$E(t)$},
                   axis line style = thick,
                   grid=both,
                   minor tick num=2,
                   max space between ticks=20,
                   grid style = {gray,opacity=0.2},
                   every axis plot/.append style={ultra thick},
                   xmin=0, xmax=20,
                   ymode=log,
                   xlabel style={font=\footnotesize},
                   ylabel style={font=\footnotesize},
                   x tick label style={font=\footnotesize},
                   y tick label style={font=\footnotesize},
                   legend style={font=\tiny}]
        \addplot[color=black] table[x=Time,y=Error_8] {figures/data/SW2D_error_time_basis/error_local_reduced_model_no_indicator_SW2D.txt};
        \addplot+[color=red] table[x=Timing,y=Error_1.1_1.2] {figures/data/SW2D_error_time_basis/error_local_reduced_model_indicator_SW2D_8.txt};
        \addplot+[color=blue] table[x=Timing,y=Error_1.2_1.3] {figures/data/SW2D_error_time_basis/error_local_reduced_model_indicator_SW2D_8.txt}; 
        \addplot+[color=green] table[x=Timing,y=Error_1.3_1.1] {figures/data/SW2D_error_time_basis/error_local_reduced_model_indicator_SW2D_8.txt}; 
        \addplot+[color=cyan] table[x=Timing,y=Error_1.1_1.3] {figures/data/SW2D_error_time_basis/error_local_reduced_model_indicator_SW2D_8.txt};
        \addplot+[color=black,dashed] table[x=Time,y=Error_8] {figures/data/SW2D_error_time_basis/error_full_model_corrupted_SW2D.txt};
        \node [text width=1em,anchor=north west] at (rel axis cs: 0.01,1.05) {\subcaption{\label{fig:time_error_SWE2D_8}}};
    \nextgroupplot[xlabel={time $\left [ s \right ]$},
                   ylabel={$\Nr_{\tau}$},
                   axis line style = thick,
                   grid=both,
                   minor tick num=2,
                   max space between ticks=20,
                   grid style = {gray,opacity=0.2},
                   every axis plot/.append style={ultra thick},
                   xmin=0, xmax=20,
                   xlabel style={font=\footnotesize},
                   ylabel style={font=\footnotesize},
                   x tick label style={font=\footnotesize},
                   y tick label style={font=\footnotesize},
                   legend style={font=\tiny}]
        \addplot[color=black] table[x=Time,y=Basis_8] {figures/data/SW2D_error_time_basis/basis_local_reduced_model_no_indicator_SW2D.txt};
        \addplot+[color=red] table[x=Timing,y=Basis_1.1_1.2] {figures/data/SW2D_error_time_basis/basis_local_reduced_model_indicator_SW2D_8.txt};
        \addplot+[color=blue] table[x=Timing,y=Basis_1.2_1.3] {figures/data/SW2D_error_time_basis/basis_local_reduced_model_indicator_SW2D_8.txt}; 
        \addplot+[color=green] table[x=Timing,y=Basis_1.3_1.1] {figures/data/SW2D_error_time_basis/basis_local_reduced_model_indicator_SW2D_8.txt}; 
        \addplot+[color=cyan] table[x=Timing,y=Basis_1.1_1.3] {figures/data/SW2D_error_time_basis/basis_local_reduced_model_indicator_SW2D_8.txt};
        \node [text width=1em,anchor=north west] at (rel axis cs: 0.01,1.05) {\subcaption{\label{fig:time_basis_SWE2D_8}}};
    \end{groupplot}
\end{tikzpicture}
\caption{SWE-2D: On the left column, we report the evolution of the error $E(t)$ \eqref{eqn:error_metric} for the adaptive and non adaptive dynamical RB methods for different values of the control parameters $r$ and $c$, and for different dimensions $\Nr_1$ of the initial reduced manifold. The target error is obtained by solving the full model with initial condition obtained by projecting \eqref{eq:init_cond_SWE2D} onto a symplectic manifold of dimension $\Nr_1$. On the right column, we report the evolution of the dimension of the dynamical reduced basis over time. The adaptive algorithm is driven by the error indicator \eqref{eq:ratio_tmp}, while in the non adaptive setting, the dimension does not change with time. We consider the cases
$\Nr_1=4$ (Figs.~\bl{(a)-(b)}),
$\Nr_1=6$ (Figs.~\bl{(c)-(d)}) and
$\Nr_1=8$ (Figs.~\bl{(e)-(f)}).}
%$\Nr_1=4$ (Figs. \ref{fig:time_error_SWE2D_4}, \ref{fig:time_basis_SWE2D_4}),
%$\Nr_1=6$ (Figs. \ref{fig:time_error_SWE2D_6}, \ref{fig:time_basis_SWE2D_6}) and
%$\Nr_1=8$ (Figs. \ref{fig:time_error_SWE2D_8}, \ref{fig:time_basis_SWE2D_8}).}
\label{fig:error_basis_time_SWE2D}
\end{figure}
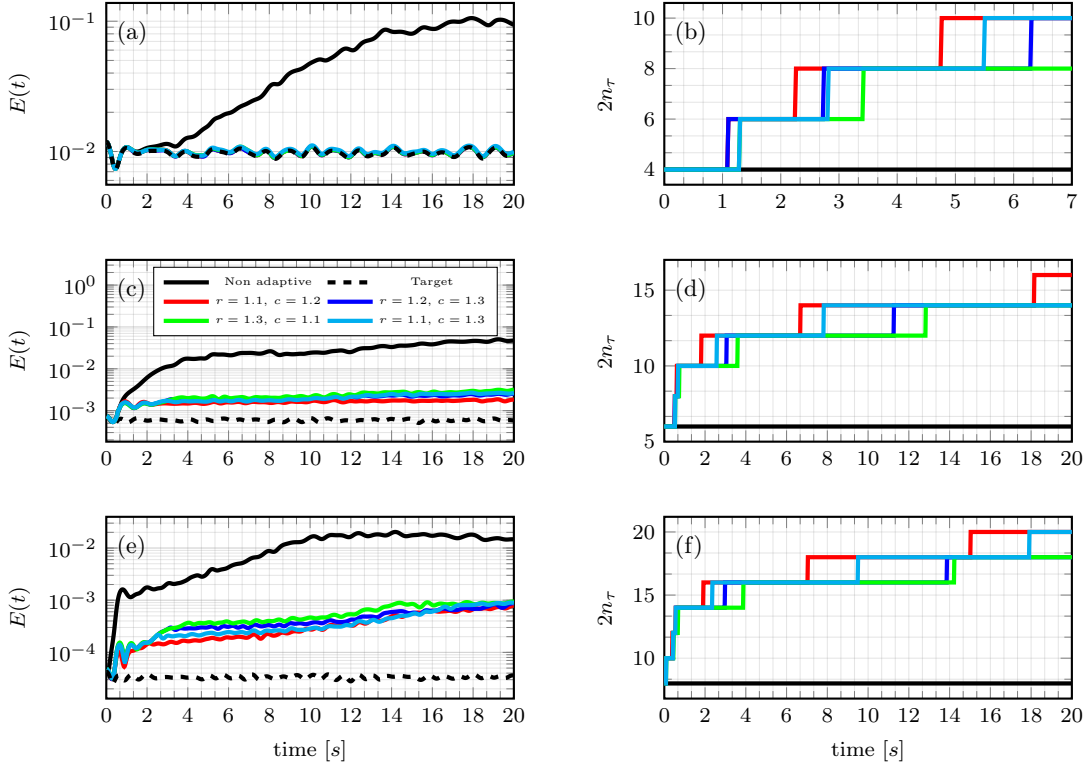

%%%%%%%%%%%%%%%%%%%%%%%%%%%%%%%%%%%%%%%%%%%%%%%%%%%%%%%%%%%%%%%%%%%

\subsection{\bl{Two-dimensional} nonlinear Schr\"odinger equation}
The nonlinear Schr\"odinger equation (NLS) is used to model, among others, the propagation of light in nonlinear optical fibers and planar waveguides and to describe the Bose--Einstein condensates in a macroscopic gaseous superfluid wave-matter state at ultra-cold temperature. 
%We consider the model in one and two dimensions:
% First, we study the approximation of the solution to the 1D nonlinear Schr\"odinger equation in the scenario of a perturbed bright soliton regime \cite{sulem2007nonlinear}.
\bl{In} the 2D setting, we test the adaptive strategy in the case of a Fourier mode cascade, where, starting from an initial condition represented by few low Fourier modes, the energy exchange to higher modes quickly complicates the dynamic of the problem \cite{caputo2011fourier}. 
More specifically, in the spatial
domain $\Omega$,
we consider the cubic Schr\"odinger equation
\begin{equation}\label{eq:schroedinger}
    \left\{
    \begin{aligned}
    & i \dfrac{\partial u}{\partial t} + \bl{\Delta u} + |u|^2 u=0, & \mbox{in}\;\Omega\times \Tcal,\\
    & u(t_0,x;\prm) = u^0(x;\prm), &\mbox{in}\;\Omega,
    \end{aligned}\right.
\end{equation}
with periodic boundary conditions, and \bl{vector-valued} parameter $\prm$.
By writing the complex-valued solution $u$ in terms of its
real and imaginary parts as
$u=q+ i v$, \eqref{eq:schroedinger}
can be written as a Hamiltonian system in canonical symplectic form with
Hamiltonian
\begin{equation*}
    \HamN(\bl{u};\prm) = \dfrac12 \int_{\Omega} \bigg[
    \bigg(\dfrac{\partial q}{\partial x}\bigg)^2+
    \bigg(\dfrac{\partial v}{\partial x}\bigg)^2+
    \bl{\bigg(\dfrac{\partial q}{\partial y}\bigg)^2+}    
    \bl{\bigg(\dfrac{\partial v}{\partial y}\bigg)^2}-
    \dfrac{1}{2} (q^2+v^2)^2
    \bigg]\,dx dy.
\end{equation*}

Let us consider the spatial domain $\Omega=[-2\pi, 2\pi]^2$ and the set of parameters $\Gamma = \left [ 0.97,1.03 \right ]^2$. We seek the numerical solution to 
% the 2D extension of
\eqref{eq:schroedinger}, for $\Np=64$ uniformly sampled parameters $\eta_h:=(\alpha,\beta)\in\Gamma_h$ entering the initial condition
\begin{equation}\label{eqn:initial_condition_NLS2D}
    u^{0}(x,y;\prmh) = \left ( 1+\alpha\sin{x}\right )\left ( 2+\beta\sin{y}\right ). 
\end{equation}
This problem is characterized by an energy exchange between Fourier modes. Although this process is local, it is not well understood how the energy exchange mechanism is influenced by the problem dimension and parameters.
In particular, although the values of $\alpha$ and $\beta$ have a limited impact on the low-rank structure of the initial condition \eqref{eqn:initial_condition_NLS2D}, the explicit effect of their variation on the energy exchange process is not known. 
We use a centered finite difference scheme to discretize the Laplacian operator.
%and the related discrete operator is denoted by $D_{xx}+D_{yy}$.
The domain $\Omega$ is partitioned using \bl{$M=101$} nodes per dimension, for a total of $N=10000$ points \bl{and with $\Delta x=\Delta y=4\pi\cdot 10^{-2}$}. Let $u_h(t;\prmh)$, for all $t\in\Tcal$ and $\prmh\in\Sprmh$, be the vector collecting 
the degrees of freedom associated with the nodal approximation of $u$. The semi-discrete problem is canonically Hamiltonian with the discrete Hamiltonian function
\begin{equation*}
\begin{aligned}
    \HamN_h(u_h;\prmh) = 
    \dfrac12 \bl{\sum_{i,j=1}^{M}}
    \bigg[ &
    \bigg(\dfrac{q_{i+1,j}-q_{i,j}}{\Delta x}\bigg)^2 +
    \bigg(\dfrac{v_{i+1,j}-v_{i,j}}{\Delta x}\bigg)^2 +\\
    & \bigg(\dfrac{q_{i,j+1}-q_{i,j}}{\Delta y}\bigg)^2 +
    \bigg(\dfrac{v_{i,j+1}-v_{i,j}}{\Delta y}\bigg)^2
    -\dfrac{1}{2}(q_{i,j}^2+v_{i,j}^2)^2
    \bigg],
    \end{aligned}
\end{equation*}
with periodic boundary conditions for $q_{i,j}$ and $v_{i,j}$.
We consider $N_{\tau}=12000$ time steps in the interval $\Tcal=\left (0,T:=3 \right ]$ so that $\Delta t = 2.5\cdot 10^{-4}$. As in the previous examples, the implicit midpoint rule is used as the numerical integrator in the high-fidelity solver.
The reduced dynamics \eqref{eq:UZred} is integrated using the 2-stage partitioned RK method.

To assess the reducibility of the problem, we collect in $\mathcal{S}\in\mathbb{R}^{\Nf\times(N_{\tau}\Np)}$
the snapshots associated with all parameters $\eta_h$ and times $t^{\tau}$, and in $\mathcal{S}_{\tau}\in\mathbb{R}^{\Nf\times \Np}$
the snapshots associated with all parameters $\eta_h$ at fixed time $t^{\tau}$, with $\tau=1,\dots,N_{\tau}$.
%In Figure \ref{fig:singular_values_NLS2D_a}) we compare the normalized decay of the singular values of $\mathcal{S}$ to the time averaged and normalized decay of the same quantity for $\mathcal{S}_{\tau}$.
%\tbr{Similarly to the 1D case,}
The slow decay of the singular values of $\mathcal{S}$, reported in Figure \ref{fig:singular_values_NLS2D_a}, suggests that a global reduced basis approach is not viable for model order reduction.
%On the other hand, the rapid decay of the local normalized spectrum with $\sigma_{\tau}^{i}<10^{-7}$ for $i>31$, hints that a local (in time) approach would be, in principle, more efficient.
The growing complexity of the high-fidelity solution, associated with different values of $\alpha$ and $\beta$, is reflected by the growth of the numerical rank shown in Figure \ref{fig:singular_values_NLS2D_b}. Hence, despite the exponential decay of the singular values of $\mathcal{S}_{\tau}$, Figure \ref{fig:singular_values_NLS2D_b} indicates that this test represents a challenging problem even for the adaptive algorithm and a balance between accuracy and computational cost is necessary while adapting the dimension of the reduced manifold.
\begin{figure}[H]
\centering
\begin{tikzpicture}
    \begin{groupplot}[
      group style={group size=2 by 1,
                   horizontal sep=2cm},
      width=7cm, height=5cm
    ]
    \nextgroupplot[xlabel={index},
                   ylabel={singular values},
                   axis line style = thick,
                   grid=both,
                   minor tick num=2,
                   max space between ticks=20,
                   grid style = {gray,opacity=0.2},
                   ymode=log,
                   ylabel near ticks,% yticklabel pos=right,
                   every axis plot/.append style={ultra thick},
                   xmin = 1, xmax=1199,
                   ymin = 1e-11, ymax = 1,
                   xlabel style={font=\footnotesize},
                   ylabel style={font=\footnotesize},
                   x tick label style={font=\footnotesize},
                   y tick label style={font=\footnotesize},
                   legend style={font=\tiny}
                   ]
        \addplot[color=black] table[x=Index,y=Values] {figures/data/NLS2D_singular_values/full_singular_values_NLS2D.txt};
        \addplot[color=red] table[x=Index,y=Values] {figures/data/NLS2D_singular_values/avg_singular_values_NLS2D.txt};
        \legend{global,local};
        \node [text width=1em,anchor=north west] at (rel axis cs: 0.08,1.05) {\subcaption{\label{fig:singular_values_NLS2D_a}}};
        \coordinate (spypoint) at (axis cs:2,0.01);
    \nextgroupplot[xlabel={time $\left [ s \right ]$},
                   ylabel={$\epsilon$-rank},
                   axis line style = thick,
                   grid=both,
                   minor tick num=2,
                   grid style = {gray,opacity=0.2},
                   every axis plot/.append style={ultra thick},
                   xmin = 0, xmax = 3,
                   ymin = 0, ymax = 60,
                   xlabel style={font=\footnotesize},
                   ylabel style={font=\footnotesize},
                   x tick label style={font=\footnotesize},
                   y tick label style={font=\footnotesize},
                   legend style={font=\tiny},
                   legend style={at={(1.2,1)},anchor=north}]
        \addplot+[] table[x=Time,y=Rank] {figures/data/NLS2D_epsilon_rank/1_epsilon_rank_NLS2D.txt};
        \addplot+[] table[x=Time,y=Rank] {figures/data/NLS2D_epsilon_rank/3_epsilon_rank_NLS2D.txt};
        \addplot+[] table[x=Time,y=Rank] {figures/data/NLS2D_epsilon_rank/5_epsilon_rank_NLS2D.txt};
        \addplot+[] table[x=Time,y=Rank] {figures/data/NLS2D_epsilon_rank/7_epsilon_rank_NLS2D.txt};
        \legend{$10^{-1}$,$10^{-3}$,$10^{-5}$,$10^{-7}$};
        \node [text width=1em,anchor=north west] at (rel axis cs: 0.16,1.05) {\subcaption{\label{fig:singular_values_NLS2D_b}}};
    \end{groupplot}
    % \node[pin={[pin distance=0.4cm]190:{%
    %     \begin{tikzpicture}[baseline,trim axis right]
    %         \begin{axis}[
    %                 axis line style = thick,
    %                 grid=both,
    %                 minor tick num=2,
    %                 grid style = {gray,opacity=0.2},
    %                 every axis plot post/.append style={ultra thick},
    %                 tiny,
    %                 ymode=log,
    %                 xmin=0,xmax=22,
    %                 ymin=0.0001,ymax=1,
    %                 width=3.6cm,
    %                 legend style={at={(1.4,1)},anchor=north},
    %                 legend cell align=left,
    %                 xlabel style={font=\footnotesize},
    %                 ylabel style={font=\footnotesize},
    %                 x tick label style={font=\footnotesize},
    %                 y tick label style={font=\footnotesize},
    %                 legend style={font=\tiny},
    %             ]
    %             \addplot[color=black] table[x=Index,y=Values] {figures/data/NLS2D_singular_values/full_singular_values_NLS2D.txt};
    %             \addplot[color=red] table[x=Index,y=Values] {figures/data/NLS2D_singular_values/avg_singular_values_NLS2D.txt};
    %         \end{axis}
    %     \end{tikzpicture}%
    % }},draw,circle,minimum size=1cm] at (spypoint) {};
\end{tikzpicture}
\caption{NLS-2D: \bl{(a)} Singular values of the global snapshots matrix $\mathcal{S}$ and of the time average of the local trajectories matrix $\mathcal{S}_{\tau}$. The singular values are normalized using the largest singular value for each case. \bl{(b)} $\epsilon$-rank of the local trajectories matrix $\mathcal{S}_{\tau}$ for different values of $\epsilon$.}
\end{figure}
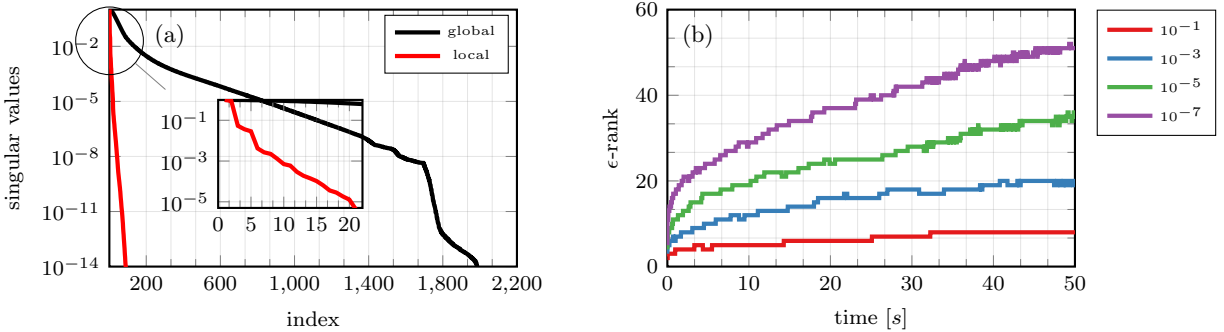
We consider several combinations of $r\in\left \{ 1.1, 1.2 \right \}$ and $c\in\left \{ 1.05,1.1,1.2 \right \}$ and different initial dimensions of the reduced manifold $\Nr_1\in\{6,8\}$. The error indicator is computed every $10$ time steps on a subset $\Gamma_I\subset\Gamma_h$ of $16$ uniformly sampled parameters. Both adaptive and non-adaptive reduced models are initialized using \eqref{eqn:initialization_local_low_rank}, with $U_0$ obtained via a complex SVD of the snapshots matrix $\mathcal{S}_1$ of the initial condition \eqref{eqn:initial_condition_NLS2D}. \bl{Figure \ref{fig:O_JO_error_NLS2D} confirms that the evolving basis $U$ generated by the dynamical reduced basis method satisfies the orthogonality and symplecticity constraints to machine precision.}

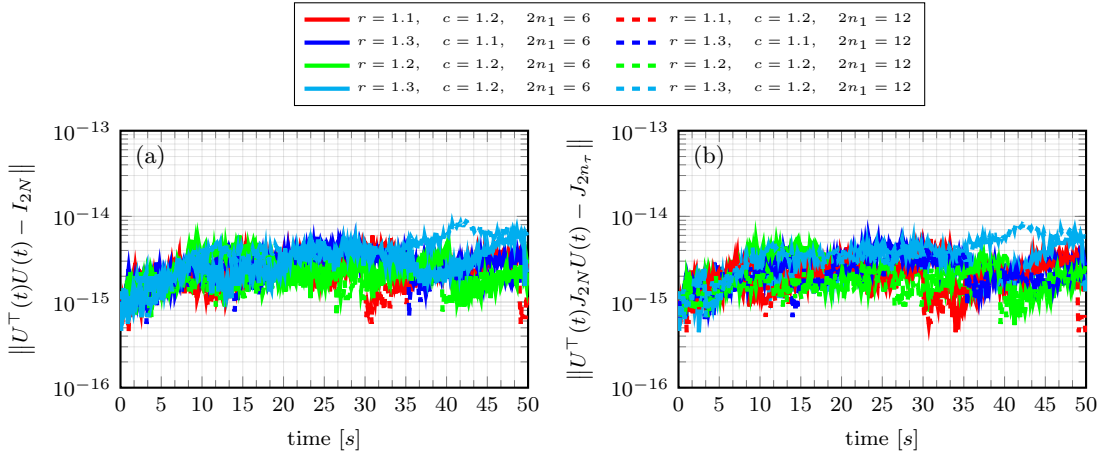
\begin{figure}[H]
\centering
\begin{tikzpicture}
    \begin{groupplot}[
      group style={group size=2 by 3,
                   horizontal sep=2cm},
      width=7cm, height=5cm
    ]
    \nextgroupplot[ylabel={$\left \| U^{\top}(t)U(t)-I_{2N} \right \|$},
                   xlabel={time $\left [ s \right ]$},
                   axis line style = thick,
                   grid=both,
                   minor tick num=2,
                   max space between ticks=20,
                   grid style = {gray,opacity=0.2},
                   %every axis plot/.append style={ultra thick},
                   xmin=0, xmax=3,
                   ymax = 10^(-14),
                   ymin = 10^(-16),
                   ymode=log,
                   xlabel style={font=\footnotesize},
                   ylabel style={font=\footnotesize},
                   x tick label style={font=\footnotesize},
                   y tick label style={font=\footnotesize},
                   legend style={font=\tiny},
                   legend columns = 2,
                   legend style={at={(1.2,1.5)},anchor=north}]
        \addplot+[color=red] table[x=Time,y=O_1.2_1.1] {figures/data/NLS2D_O_JO_orth/error_orth_NLS2D_6.txt};
        \addplot+[color=red,dashed] table[x=Time,y=O_1.2_1.1] {figures/data/NLS2D_O_JO_orth/error_orth_NLS2D_8.txt};
        \addplot+[color=blue] table[x=Time,y=O_1.2_1.05] {figures/data/NLS2D_O_JO_orth/error_orth_NLS2D_6.txt};
        \addplot+[color=blue,dashed] table[x=Time,y=O_1.2_1.05] {figures/data/NLS2D_O_JO_orth/error_orth_NLS2D_8.txt};
        \addplot+[color=green] table[x=Time,y=O_1.1_1.05] {figures/data/NLS2D_O_JO_orth/error_orth_NLS2D_6.txt};
        \addplot+[color=green,dashed] table[x=Time,y=O_1.1_1.05] {figures/data/NLS2D_O_JO_orth/error_orth_NLS2D_8.txt};
        \node [text width=1em,anchor=north west] at (rel axis cs: 0.02,1.05) {\subcaption{\label{fig:O_error_NLS2D}}};
        \legend{{$r=1.2, \quad c=1.1, \quad \Nr_{1}=6 \quad$},
                {$r=1.2, \quad c=1.1, \quad \Nr_{1}=8$},
                {$r=1.2, \quad c=1.05, \quad \Nr_{1}=6 \quad$},
                {$r=1.2, \quad c=1.05, \quad \Nr_{1}=8$},
                {$r=1.1, \quad c=1.05, \quad \Nr_{1}=6 \quad$},
                {$r=1.1, \quad c=1.05, \quad \Nr_{1}=8$},
                };
    \nextgroupplot[ylabel={$\left \| U^\top(t)\J{\Nf}U(t)-\J{\Nrt} \right \|$},
                   xlabel={time $\left [ s \right ]$},
                   axis line style = thick,
                   grid=both,
                   minor tick num=2,
                   max space between ticks=20,
                   grid style = {gray,opacity=0.2},
                   %every axis plot/.append style={ultra thick},
                   xmin=0, xmax=3,
                   ymax = 10^(-14),
                   ymin = 10^(-16),
                   ymode=log,
                   xlabel style={font=\footnotesize},
                   ylabel style={font=\footnotesize},
                   x tick label style={font=\footnotesize},
                   y tick label style={font=\footnotesize},
                   legend style={font=\tiny}]
        \addplot+[color=red] table[x=Time,y=JO_1.2_1.1] {figures/data/NLS2D_O_JO_orth/error_Jorth_NLS2D_6.txt};
        \addplot+[color=red,dashed] table[x=Time,y=JO_1.2_1.1] {figures/data/NLS2D_O_JO_orth/error_Jorth_NLS2D_8.txt};
        \addplot+[color=blue] table[x=Time,y=JO_1.2_1.05] {figures/data/NLS2D_O_JO_orth/error_Jorth_NLS2D_6.txt};
        \addplot+[color=blue,dashed] table[x=Time,y=JO_1.2_1.05] {figures/data/NLS2D_O_JO_orth/error_Jorth_NLS2D_8.txt};
        \addplot+[color=green] table[x=Time,y=JO_1.1_1.05] {figures/data/NLS2D_O_JO_orth/error_Jorth_NLS2D_6.txt};
        \addplot+[color=green,dashed] table[x=Time,y=JO_1.1_1.05] {figures/data/NLS2D_O_JO_orth/error_Jorth_NLS2D_8.txt};
        \node [text width=1em,anchor=north west] at (rel axis cs: 0.02,1.05) {\subcaption{\label{fig:JO_error_NLS2D}}};
    \end{groupplot}
\end{tikzpicture}
\caption{\bl{NLS-2D: Evolution of the error in the orthogonality (a) and symplecticity (b)
of the reduced basis obtained with the adaptive dynamical RB method for different choices of the control parameters $r$, $c$ and initial dimension of the reduced manifold $\Nr_1$.}}
\label{fig:O_JO_error_NLS2D}
\end{figure}

%In Figure \ref{fig:error_final_NLS2D} the efficiency of the reduced models is assessed by reporting the error $E(T)$ versus the algorithm runtime.
In line with the fact that the full model solution has a gradually increasing rank (Figure \ref{fig:singular_values_NLS2D_b}), adapting the dimension of the basis improves the accuracy of the approximation, as shown in Figure \ref{fig:error_time_NLS2D}.
%Observe that the adaptive reduced basis method reaches dimensions of around $\Nr_T=25$; the error made by a dynamical non adaptive reduced basis of similar dimension reaches similar approximation properties as the ones of the adaptive scheme, although at a higher computational cost.
%%by a factor of $45$ for $\Nr_1=6$ and $\Nr_1=8$, compared to the case of non adaption.

In terms of the computational cost of the adaptive dynamical model, we record a speedup of at least $58$ times with respect to the high-fidelity model, whose runtime is $6.2 \cdot 10^{5}s$.
These results can be explained as for the 2D shallow water test: in the presence of polynomial nonlinearities the strategy proposed in \Cref{sec:nonlinear} allows computational costs that scale linearly with $\Nfh$ instead of $\Nfh^{\frac{1}{2}}$ and $\Nfh^{\frac{2}{3}}$ for problems ensuing from semi-discrete formulations of PDEs in $2$D and $3$D, respectively.

In Figure \ref{fig:error_time_NLS2D}, we observe that, although increasing in time, the error associated with the adaptive reduced dynamical model has a smaller slope than the error of the non-adaptive method.

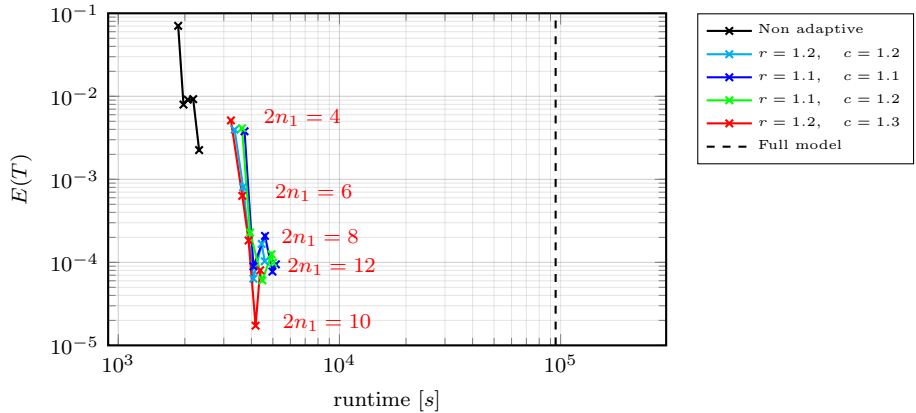
\begin{figure}[H]
\centering
\begin{tikzpicture}
    \begin{groupplot}[
      group style={group size=2 by 2,
                   horizontal sep=2cm},
      width=7cm, height=4cm
    ]
    \nextgroupplot[ylabel={$E(t)$},
                   axis line style = thick,
                   grid=both,
                   minor tick num=2,
                   max space between ticks=20,
                   grid style = {gray,opacity=0.2},
                   every axis plot/.append style={ultra thick},
                   xmin=0, xmax=3,
                   ymode=log,
                   xlabel style={font=\footnotesize},
                   ylabel style={font=\footnotesize},
                   x tick label style={font=\footnotesize},
                   y tick label style={font=\footnotesize},
                   legend style={font=\tiny}]
        \addplot[color=black] table[x=Time,y=Error_6] {figures/data/NLS2D_error_time_basis/error_local_reduced_model_no_indicator_NLS2D.txt};
        \addplot+[color=red] table[x=Timing,y=Error_1.2_1.05] {figures/data/NLS2D_error_time_basis/error_local_reduced_model_indicator_NLS2D_6.txt};
        \addplot+[color=green] table[x=Timing,y=Error_1.2_1.1] {figures/data/NLS2D_error_time_basis/error_local_reduced_model_indicator_NLS2D_6.txt}; 
        \addplot+[color=cyan] table[x=Timing,y=Error_1.1_1.05] {figures/data/NLS2D_error_time_basis/error_local_reduced_model_indicator_NLS2D_6.txt};
        %\addplot+[color=black,dashed] table[x=Time,y=Error_4] {figures/data/SW2D_error_time_basis/error_full_model_corrupted_SW2D.txt};
        \node [text width=1em,anchor=north west] at (rel axis cs: 0.01,1.05) {\subcaption{\label{fig:time_error_NLS2D_6}}};
    \nextgroupplot[ylabel={$\Nr_{\tau}$},
                   axis line style = thick,
                   grid=both,
                   minor tick num=2,
                   max space between ticks=20,
                   grid style = {gray,opacity=0.2},
                   every axis plot/.append style={ultra thick},
                   xmin=0, xmax=3,
                   xlabel style={font=\footnotesize},
                   ylabel style={font=\footnotesize},
                   x tick label style={font=\footnotesize},
                   y tick label style={font=\footnotesize},
                   legend style={font=\tiny}]
        \addplot[color=black] table[x=Time,y=Basis_6] {figures/data/NLS2D_error_time_basis/basis_local_reduced_model_no_indicator_NLS2D.txt};
        \addplot+[color=red] table[x=Timing,y=Basis_1.2_1.05] {figures/data/NLS2D_error_time_basis/basis_local_reduced_model_indicator_NLS2D_6.txt};
        \addplot+[color=green] table[x=Timing,y=Basis_1.2_1.1] {figures/data/NLS2D_error_time_basis/basis_local_reduced_model_indicator_NLS2D_6.txt}; 
        \addplot+[color=cyan] table[x=Timing,y=Basis_1.1_1.05] {figures/data/NLS2D_error_time_basis/basis_local_reduced_model_indicator_NLS2D_6.txt};
        \node [text width=1em,anchor=north west] at (rel axis cs: 0.01,1.05) {\subcaption{\label{fig:time_basis_NLS2D_6}}};
    \nextgroupplot[xlabel={time $\left [ s \right ]$},
                ylabel={$E(t)$},
                   axis line style = thick,
                   grid=both,
                   minor tick num=2,
                   max space between ticks=20,
                   grid style = {gray,opacity=0.2},
                   every axis plot/.append style={ultra thick},
                   xmin=0, xmax=3,
                   ymode=log,
                   ymax = 10^(1),
                   legend columns = 2,
                   xlabel style={font=\footnotesize},
                   ylabel style={font=\footnotesize},
                   x tick label style={font=\footnotesize},
                   y tick label style={font=\footnotesize},
                   legend style={font=\tiny},
                    legend style={nodes={scale=0.83, transform shape}}]
        \addplot[color=black] table[x=Time,y=Error_8] {figures/data/NLS2D_error_time_basis/error_local_reduced_model_no_indicator_NLS2D.txt};
        \addplot+[color=red] table[x=Timing,y=Error_1.2_1.05] {figures/data/NLS2D_error_time_basis/error_local_reduced_model_indicator_NLS2D_8.txt};
        \addplot+[color=green] table[x=Timing,y=Error_1.2_1.1] {figures/data/NLS2D_error_time_basis/error_local_reduced_model_indicator_NLS2D_8.txt}; 
        \addplot+[color=cyan] table[x=Timing,y=Error_1.1_1.05] {figures/data/NLS2D_error_time_basis/error_local_reduced_model_indicator_NLS2D_8.txt};
        %\addplot+[color=black,dashed] table[x=Time,y=Error_4] {figures/data/SW2D_error_time_basis/error_full_model_corrupted_SW2D.txt};
        \node [text width=1em,anchor=north west] at (rel axis cs: 0.01,1.05) {\subcaption{\label{fig:time_error_NLS2D_8}}};
        \legend{{Non adaptive},
                {$r=1.2, \, c=1.05$},
                {$r=1.2, \, c=1.1$},
                {$r=1.1, \, c=1.05$},
                {Target}};
    \nextgroupplot[xlabel={time $\left [ s \right ]$},
                   ylabel={$\Nr_{\tau}$},
                   axis line style = thick,
                   grid=both,
                   minor tick num=2,
                   max space between ticks=20,
                   grid style = {gray,opacity=0.2},
                   every axis plot/.append style={ultra thick},
                   xmin=0, xmax=3,
                   xlabel style={font=\footnotesize},
                   ylabel style={font=\footnotesize},
                   x tick label style={font=\footnotesize},
                   y tick label style={font=\footnotesize},
                   legend style={font=\tiny}]
        \addplot[color=black] table[x=Time,y=Basis_8] {figures/data/NLS2D_error_time_basis/basis_local_reduced_model_no_indicator_NLS2D.txt};
        \addplot+[color=red] table[x=Timing,y=Basis_1.2_1.05] {figures/data/NLS2D_error_time_basis/basis_local_reduced_model_indicator_NLS2D_8.txt};
        \addplot+[color=green] table[x=Timing,y=Basis_1.2_1.1] {figures/data/NLS2D_error_time_basis/basis_local_reduced_model_indicator_NLS2D_8.txt}; 
        \addplot+[color=cyan] table[x=Timing,y=Basis_1.1_1.05] {figures/data/NLS2D_error_time_basis/basis_local_reduced_model_indicator_NLS2D_8.txt};
        \node [text width=1em,anchor=north west] at (rel axis cs: 0.01,1.05) {\subcaption{\label{fig:time_basis_NLS2D_8}}};
    \end{groupplot}
\end{tikzpicture}
\caption{NLS-2D: On the left column, we report the evolution of the error $E(t)$ \eqref{eqn:error_metric} for the adaptive and non adaptive dynamical RB methods for different values of the control parameters $r$ and $c$, and for different dimensions $\Nr_1$ of the initial reduced manifold.
%The target error is obtained by solving the full model with initial condition obtained by projecting \eqref{eqn:initial_condition_NLS2D} onto a symplectic manifold of dimension $\Nr_1$.
On the right column, we report the evolution of the dimension of the dynamical reduced basis over time. We consider the cases
$\Nr_1=6$ (Figs.~\bl{(a)-(b)}) and
$\Nr_1=8$ (Figs.~\bl{(c)-(d)}).}
%$\Nr_1=6$ (Figs. \ref{fig:time_error_NLS2D_6}, \ref{fig:time_basis_NLS2D_6}) and
%$\Nr_1=8$ (Figs. \ref{fig:time_error_NLS2D_8}, \ref{fig:time_basis_NLS2D_8}).}
\label{fig:error_time_NLS2D}
\end{figure}

%%%%%%%%%%%%%%%%%%%%%%%%%%%%%%%%%%%%%%%%%%%%%%%%%%%%%%%%%%%%%%%%%%%%%%%%%%
\subsection{Vlasov--Poisson plasma model with forced electric field}
The Vlasov--Poisson system describes the dynamics of a collisionless magnetized plasma
under the action of a self-consistent electric field.
The evolution of the plasma at any time $t\in \Tcal\subset\mathbb{R}$ is described in terms of
the distribution function
$f^s(t,x,v)$ ($s$ denotes the particle species) in the Cartesian phase space domain
$(x,v)\in\Omega:=\Omega_x\times\Omega_v\subset\mathbb{R}^2$.
In this work, we consider the one-species ($s=1$) paraxial approximation of the Vlasov-Poisson equation, used in the study of long and thin beams of particles \cite{hirstoaga2019design}. More specifically, we assume that the beam has reached a stationary state, the longitudinal length of the beam is the predominant spatial scale and the velocity along the longitudinal direction is constant. Moreover, we look at the case in which the effects of the self-consistent electric field $E$ are negligible compared to the ones caused by an external electric field that we denote by $\Xi$. The external electric field is assumed to be independent of time and periodic with respect to the longitudinal dimension. Using the scaling argument proposed in \cite{frenod2009long} and the aforementioned assumptions, the problem is: For $f_0\in V_{|_{t=0}}$, find $f\in C^1(\Tcal;L^2(\Omega))\cap C^0(\Tcal;V)$ such that
\begin{equation}\label{eq:VP_paraxial}
\begin{aligned}
	\partial_t f+\dfrac{1}{\bl{\nu}}v\,\partial_x f +  \Xi\,\partial_v f=0, &\qquad\mbox{in}\;\Omega\times\Tcal,\\
	f(0,x,v)=f_0,														  &\qquad\mbox{in}\;\Omega,
\end{aligned}
\end{equation}
where the electric field $\Xi$ is prescribed at all $t\in\Tcal$, $x\in\Omega_x$,
the parameter $\bl{\nu}\in\mathbb{R}$ represents a spatial scaling and
the Vlasov equation has been normalized so that mass and charge are set to $m=q=1$.
In \eqref{eq:VP_paraxial}, since we are considering stationary states, the variable $t$ can be interpreted as the longitudinal coordinate and $\mathcal{T}$ as the longitudinal spatial domain. %Despite this, we keep $t$ has variable name because of the structural similarity to the 1D-1V Vlasov equation.

For the semi-discrete approximation of the Vlasov equation in \eqref{eq:VP_paraxial}
we consider a particle method:
The distribution function $f$ is approximated by the superposition of $P\in\mathbb{N}$ computational macro-particles each having a weight $\omega_\ell$, so that
\begin{equation*}
    f(t,x,v)\approx f_h(t,x,v) = \sum_{\ell=1}^P \omega_\ell\, S(x-X_\ell(t)) S(v-V_\ell(t)),
\end{equation*}
where $X(t)$ and $V(t)$ are the vector of the position and velocity of the macro-particles, respectively,
and $S$ is a compactly supported shape function,
here chosen to be the Dirac delta.
The idea of particle methods is to derive the time evolution of the approximate distribution function $f_h$ by advancing the macro-particles along the characteristics of the Vlasov equation.
Particle methods, like particle-in-cell (PIC), are widely use in the numerical simulation of plasma problems. However, the slow convergence requires the use of many particles to achieve sufficient accuracy and therefore PIC methods are expensive.
Model order reduction, in the number of macro-particles, of these semi-discrete schemes
can be crucial and potentially extremely beneficial.

The particle approximation of problem \eqref{eq:VP_paraxial}
yields a Hamiltonian system where the unknowns are the vectors of position $X$
and velocity $V$ of the particles with the discrete Hamiltonian reads 
\begin{equation}\label{eq:VPHamN}
    \begin{aligned}
        \HamN_h(f_h) = \sum_{\ell=1}^P \dfrac{1}{2\bl{\nu}}\, \omega_\ell V_\ell(t)^2
        \bl{+}\phi(X_{\ell}(t))
         = \dfrac{1}{2\bl{\nu}} V(t)^\top W_p V(t) \bl{+}\phi(X(t)).
    \end{aligned}
\end{equation}
Here $\phi$ denotes the potential, defined as $\Xi(x) = -\partial_x \phi(x)$, for all $x\in\Omega_x$, $W_p := \mathrm{diag}(\omega_1,\ldots,\omega_P)$,
and $\mathrm{diag}(d)$ denotes the diagonal matrix with diagonal elements given by the vector $d$.

For this test we consider $\Nfh=1000$ particles with uniform unitary weight, $\omega_i=1$, for all $i=1,\dots,\Nfh$. The external electric field is given as $\Xi(t,x)=-x^3$ for all $t\in\Tcal$ and $x\in\Omega_x$. The entries of the initial position $X(0)$ and velocity $V(0)$ vectors are independently sampled from the perturbed Maxwellian
\begin{equation}\label{eqn:VPInitial}
 f(0,x,v)= \left( \dfrac{1}{\sqrt{2\pi}\alpha}e^{-0.5v^2\alpha^{-2}}\right)
 \left( 1+\beta\cos\left( 4\pi\dfrac{x+0.8}{1.6}\right)\right),
\end{equation}
% ( 1/(sqrt(2pi)*alpha)*e^(-0.5*(v/alpha)^2) )( 1+beta*cos(4pi*(x-x_min)/(x_max-x_min))),
using the inversion sampling technique on the spatial domain $\Omega=[-0.8,0.8]$. The
vector-valued parameter $\prm=(\alpha,\beta,\bl{\nu})$ takes values in the
set $\Gamma_h$, derived via uniform samples of the parameter domain $\Gamma=[0.07,0.09]\times[0.02,0.03]\times[0.4,0.8]$ with $\Np=125$ values.
The full model solution is computed in the interval $\mathcal{T}=[0,20]$, split into $N_{\tau}=20000$ time steps,
%of width $\Delta t=10^{-3}$
using the symplectic midpoint rule. In this setting, particles oscillate along the longitudinal dimension with different transverse velocities and an approximate period of $2\pi\bl{\nu}$, with a bulk of slow particles in the center of the beam spreading thin filaments of faster particles, as shown in Figure \ref{fig:init_cond_final_V1D1V}.

\begin{figure}[H]
\centering
% \includegraphics[scale=0.9]{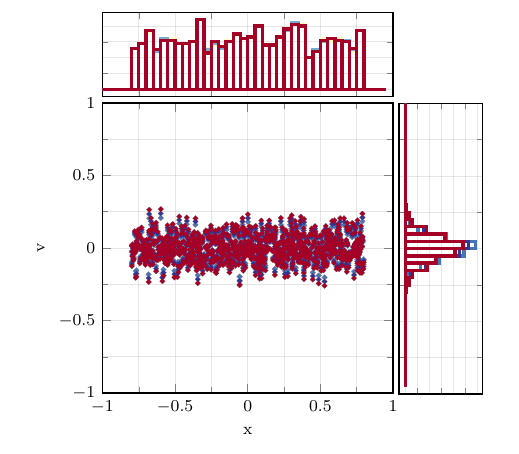}
% \hspace{0.5cm}
% \includegraphics[scale=0.9]{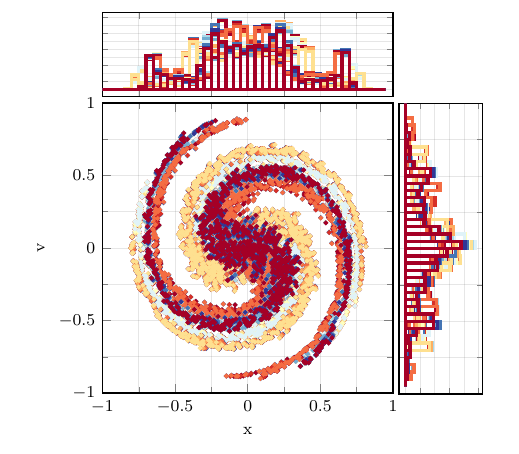}
\includegraphics[scale=0.87]{figures/data/V1D1V_scatter_data/fancy_spiral_initial.pdf}
\hspace{0.35cm}
\includegraphics[scale=0.87]{figures/data/V1D1V_scatter_data/fancy_spiral_final.pdf}
\begin{tikzpicture}
\node[inner sep=0pt] at (-0.5,0) {(a)};
\node[inner sep=0pt] at (7,0) {(b)};
\end{tikzpicture}
\caption{Vlasov 1D1V: Particle distribution associated with the initial condition (\bl{Fig. (a)}) and the high-fidelity solution at $t=T$ (\bl{Fig. (b)}) for all the parameter values $\prmh\in\Sprmh$.}
\label{fig:init_cond_final_V1D1V}
\end{figure}
The reducibility of the problem is studied by computing the normalized singular values of the global snapshots matrix $\mathcal{S}\in\mathbb{R}^{2N\times(N_{\tau}p)}$ and the time average of the normalized singular values of the matrices $\mathcal{S}_{\tau}\in\mathbb{R}^{2N\times p}$ of snapshots at fixed time $t^{\tau}$ for all $\tau=1,\dots,N_{\tau}$, collecting the high-fidelity solutions corresponding to all the sampled parameters $\prmh$.
The spectra, reported in Figure \ref{fig:singular_values_V1D1V_a}, suggest that the decay of the global singular values is fast enough to hint at a global low-rank structure of the problem. However, for this test case, an adaptive dynamical approach is expected to be beneficial in capturing the increasing rank of the solution (Figure \ref{fig:singular_values_V1D1V_b}) with a smaller local reduced basis.
%the full model solution can be approximated accurately by means of a global reduced basis.  
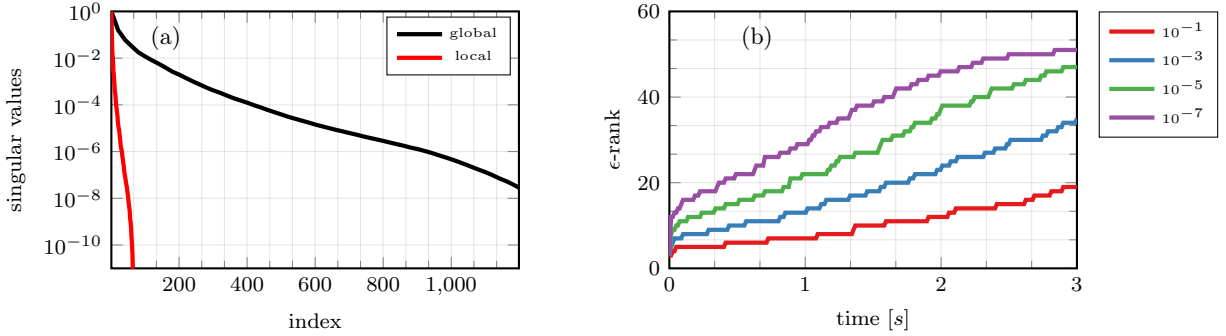
\begin{figure}[H]
\centering
\begin{tikzpicture}
    \begin{groupplot}[
      group style={group size=2 by 1,
                   horizontal sep=2cm},
      width=7cm, height=5cm
    ]
    \nextgroupplot[xlabel={index},
                   ylabel={singular values},
                   axis line style = thick,
                   grid=both,
                   minor tick num=2,
                   max space between ticks=20,
                   grid style = {gray,opacity=0.2},
                   ymode=log,
                   ylabel near ticks, %yticklabel pos=right,
                   every axis plot/.append style={ultra thick},
                   legend style={at={(0.8,0.36)},anchor=north},
                   xmin = 1,
                   ymin = 1e-14, ymax = 1,
                   xlabel style={font=\footnotesize},
                   ylabel style={font=\footnotesize},
                   x tick label style={font=\footnotesize},
                   y tick label style={font=\footnotesize},
                   legend style={font=\tiny}
                   ]
        \addplot[color=black] table[x=Index,y=Values] {figures/data/V1D1V_singular_values/full_singular_values_V1D1V.txt};
        \addplot[color=red] table[x=Index,y=Values] {figures/data/V1D1V_singular_values/avg_singular_values_V1D1V.txt};
        \legend{global,local};
        \node [text width=1em,anchor=north west] at (rel axis cs: 0.09,1.05) {\subcaption{\label{fig:singular_values_V1D1V_a}}};
        \coordinate (spypoint) at (axis cs:2,0.005);
    \nextgroupplot[xlabel={time $\left [ s \right ]$},
                   ylabel={$\epsilon$-rank},
                   axis line style = thick,
                   grid=both,
                   minor tick num=2,
                   grid style = {gray,opacity=0.2},
                   every axis plot/.append style={ultra thick},
                   xmin = 0, xmax = 7,
                   ymin = 0, ymax = 80,
                   legend style={at={(1.16,1)},anchor=north},
                   xlabel style={font=\footnotesize},
                   ylabel style={font=\footnotesize},
                   x tick label style={font=\footnotesize},
                   y tick label style={font=\footnotesize},
                   legend style={font=\tiny}]
        \addplot+[] table[x=Time,y=Rank] {figures/data/V1D1V_epsilon_rank/0.1_epsilon_rank_V1D1V.txt};
        \addplot+[] table[x=Time,y=Rank] {figures/data/V1D1V_epsilon_rank/0.001_epsilon_rank_V1D1V.txt};
        \addplot+[] table[x=Time,y=Rank] {figures/data/V1D1V_epsilon_rank/1e-05_epsilon_rank_V1D1V.txt};
        \addplot+[] table[x=Time,y=Rank] {figures/data/V1D1V_epsilon_rank/1e-07_epsilon_rank_V1D1V.txt};
        \addplot+[] table[x=Time,y=Rank] {figures/data/V1D1V_epsilon_rank/1e-09_epsilon_rank_V1D1V.txt};
        \legend{$10^{-1}$,$10^{-3}$,$10^{-5}$,$10^{-7}$,$10^{-9}$};
        \node [text width=1em,anchor=north west] at (rel axis cs: 0.02,1.05) {\subcaption{\label{fig:singular_values_V1D1V_b}}};
    \end{groupplot}
    \node[pin={[pin distance=3.2cm]372:{%
        \begin{tikzpicture}[baseline,trim axis right]
            \begin{axis}[
                    axis line style = thick,
                    grid=both,
                    minor tick num=2,
                    grid style = {gray,opacity=0.2},
                    every axis plot post/.append style={ultra thick},
                    tiny,
                    ymode=log,
                    xmin=0,xmax=22,
                    ymin=0.0001,ymax=1,
                    width=3.6cm,
                    legend style={at={(1.4,1)},anchor=north},
                    legend cell align=left,
                    xlabel style={font=\footnotesize},
                    ylabel style={font=\footnotesize},
                    x tick label style={font=\footnotesize},
                    y tick label style={font=\footnotesize},
                    legend style={font=\tiny},
                ]
                \addplot[color=black] table[x=Index,y=Values] {figures/data/V1D1V_singular_values/full_singular_values_V1D1V.txt};
                \addplot[color=red] table[x=Index,y=Values] {figures/data/V1D1V_singular_values/avg_singular_values_V1D1V.txt};
            \end{axis}
        \end{tikzpicture}%
    }},draw,circle,minimum size=1cm] at (spypoint) {};
\end{tikzpicture}
\caption{Vlasov 1D1V: \bl{(a)} Singular values of the global snapshots matrix $\mathcal{S}$ and time average of the singular values of the local trajectories matrix $\mathcal{S}_{\tau}$. The singular values are normalized using the largest singular value for each case.\bl{(b)} $\epsilon$-rank of the local trajectories matrix $\mathcal{S}_{\tau}$ for different values of $\epsilon$.}
\end{figure}
\noindent
Figure \ref{fig:Hamiltonian_error_Vlasov} shows the relative error in the conservation of the Hamiltonian for different dimensions of the reduced manifold,
and values of the control parameters $r$ and $c$.
Although exact Hamiltonian conservation is not guaranteed by the proposed partitioned RK methods, good control in the conservation error, almost independent of the reduced dimension and control parameters, results from the preservation of the symplectic structure both in the reduction and in the discretization.
The development of temporal integrators
for the Vlasov--Poisson problem
that are both structure and energy preserving should be a subject of future investigations.
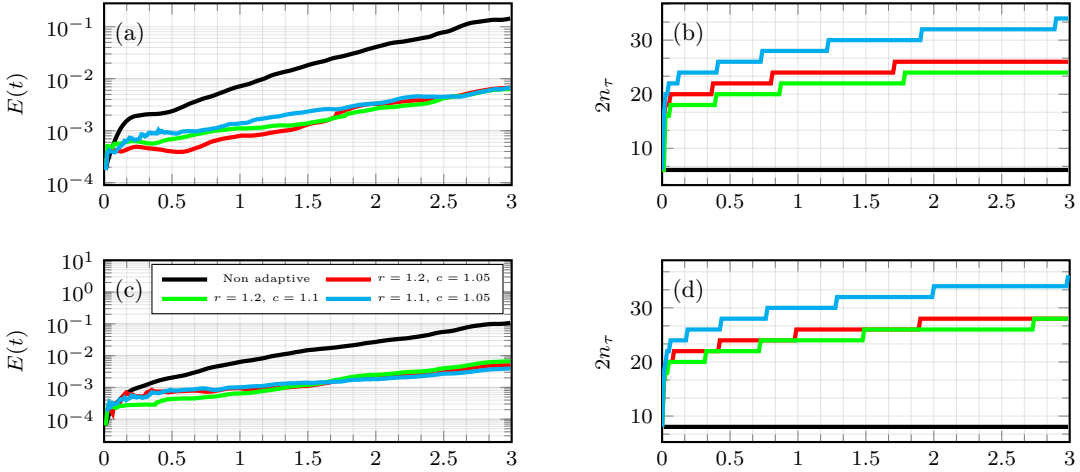
\begin{figure}[H]
\centering
\begin{tikzpicture}
    \begin{groupplot}[
      group style={group size=2 by 2,
                  horizontal sep=2cm},
      width=7cm, height=4cm
    ]
    \nextgroupplot[ylabel={$E_{\mathcal{H}_h}(t)$},
                  axis line style = thick,
                  grid=both,
                  minor tick num=2,
                  max space between ticks=20,
                  grid style = {gray,opacity=0.2},
                  every axis plot/.append style={ultra thick},
                  xmin=0, xmax=20,
                  ymode=log,
                  xlabel style={font=\footnotesize},
                  ylabel style={font=\footnotesize},
                  x tick label style={font=\footnotesize},
                  y tick label style={font=\footnotesize},
                  legend style={font=\tiny}]
        \addplot[color=black] table[x=Timing,y=Error] {figures/data/V1D1V_Hamiltonian/error_Hamiltonian_local_reduced_model_no_indicator_V1D1V_6.txt};
        \addplot[color=red] table[x=Timing,y=Error_1_2] {figures/data/V1D1V_Hamiltonian/error_Hamiltonian_local_reduced_model_indicator_V1D1V_6.txt};
        \addplot[color=blue] table[x=Timing,y=Error_1_3] {figures/data/V1D1V_Hamiltonian/error_Hamiltonian_local_reduced_model_indicator_V1D1V_6.txt};
        \addplot[color=green] table[x=Timing,y=Error_1_5] {figures/data/V1D1V_Hamiltonian/error_Hamiltonian_local_reduced_model_indicator_V1D1V_6.txt};
        \node [text width=1em,anchor=north west] at (rel axis cs: 0.01,1.05) {\subcaption{\label{fig:error_Hamiltonian_V1D1V_6}}};
    \nextgroupplot[axis line style = thick,
                  grid=both,
                  minor tick num=2,
                  max space between ticks=20,
                  grid style = {gray,opacity=0.2},
                  every axis plot/.append style={ultra thick},
                  xmin=0, xmax=20,
                  ymin=10^(-8),
                  ymode=log,
                  xlabel style={font=\footnotesize},
                  ylabel style={font=\footnotesize},
                  x tick label style={font=\footnotesize},
                  y tick label style={font=\footnotesize},
                  legend style={font=\tiny},
                  legend pos = south east,
                  legend columns = 2,
                  legend style={nodes={scale=0.83, transform shape}}]
        \addplot[color=black] table[x=Timing,y=Error] {figures/data/V1D1V_Hamiltonian/error_Hamiltonian_local_reduced_model_no_indicator_V1D1V_8.txt};
        \addplot[color=red] table[x=Timing,y=Error_1_2] {figures/data/V1D1V_Hamiltonian/error_Hamiltonian_local_reduced_model_indicator_V1D1V_8.txt};
        \addplot[color=blue] table[x=Timing,y=Error_1_3] {figures/data/V1D1V_Hamiltonian/error_Hamiltonian_local_reduced_model_indicator_V1D1V_8.txt};
        \addplot[color=green] table[x=Timing,y=Error_1_5] {figures/data/V1D1V_Hamiltonian/error_Hamiltonian_local_reduced_model_indicator_V1D1V_8.txt};
        \node [text width=1em,anchor=north west] at (rel axis cs: 0.01,1.05) {\subcaption{\label{fig:error_Hamiltonian_V1D1V_8}}};
        \legend{Non adaptive,
                $r=1.2 \quad c=1.1$,
                $r=1.3 \quad c=1.1$,
                $r=1.5 \quad c=1.1$};
    \nextgroupplot[ylabel={$E_{\mathcal{H}_h}(t)$},
                  xlabel={\bl{time $\left [ s \right ]$}},
                  axis line style = thick,
                  grid=both,
                  minor tick num=2,
                  max space between ticks=20,
                  grid style = {gray,opacity=0.2},
                  every axis plot/.append style={ultra thick},
                  xmin=0, xmax=20,
                  ymode=log,
                  xlabel style={font=\footnotesize},
                  ylabel style={font=\footnotesize},
                  x tick label style={font=\footnotesize},
                  y tick label style={font=\footnotesize},
                  legend style={font=\tiny}]
        \addplot[color=black] table[x=Timing,y=Error] {figures/data/V1D1V_Hamiltonian/error_Hamiltonian_local_reduced_model_no_indicator_V1D1V_10.txt};
        \addplot[color=red] table[x=Timing,y=Error_1_2] {figures/data/V1D1V_Hamiltonian/error_Hamiltonian_local_reduced_model_indicator_V1D1V_10.txt};
        \addplot[color=blue] table[x=Timing,y=Error_1_3] {figures/data/V1D1V_Hamiltonian/error_Hamiltonian_local_reduced_model_indicator_V1D1V_10.txt};
        \addplot[color=green] table[x=Timing,y=Error_1_5] {figures/data/V1D1V_Hamiltonian/error_Hamiltonian_local_reduced_model_indicator_V1D1V_10.txt};
        \node [text width=1em,anchor=north west] at (rel axis cs: 0.01,1.05) {\subcaption{\label{fig:error_Hamiltonian_V1D1V_10}}};
    \nextgroupplot[xlabel={\bl{time $\left [ s \right ]$}},
                  axis line style = thick,
                  grid=both,
                  minor tick num=2,
                  max space between ticks=20,
                  grid style = {gray,opacity=0.2},
                  every axis plot/.append style={ultra thick},
                  xmin=0, xmax=20,
                  ymode=log,
                  xlabel style={font=\footnotesize},
                  ylabel style={font=\footnotesize},
                  x tick label style={font=\footnotesize},
                  y tick label style={font=\footnotesize},
                  legend style={font=\tiny}]
        \addplot[color=black] table[x=Timing,y=Error] {figures/data/V1D1V_Hamiltonian/error_Hamiltonian_local_reduced_model_no_indicator_V1D1V_12.txt};
        \addplot[color=red] table[x=Timing,y=Error_1_2] {figures/data/V1D1V_Hamiltonian/error_Hamiltonian_local_reduced_model_indicator_V1D1V_12.txt};
        \addplot[color=blue] table[x=Timing,y=Error_1_3] {figures/data/V1D1V_Hamiltonian/error_Hamiltonian_local_reduced_model_indicator_V1D1V_12.txt};
        \addplot[color=green] table[x=Timing,y=Error_1_5] {figures/data/V1D1V_Hamiltonian/error_Hamiltonian_local_reduced_model_indicator_V1D1V_12.txt};
        \node [text width=1em,anchor=north west] at (rel axis cs: 0.01,1.05) {\subcaption{\label{fig:error_Hamiltonian_V1D1V_12}}};
    \end{groupplot}
\end{tikzpicture}
\caption{Vlasov 1D1V: Relative error \eqref{eqn:relative_error_Hamiltonian} in the conservation of the discrete Hamiltonian \eqref{eq:VPHamN}
for the dynamical adaptive reduced basis method with initial reduced dimensions
$\Nr_1=6$ (Fig.~\bl{(a)}),
$\Nr_1=8$ (Fig.~\bl{(b)}),
$\Nr_1=10$ (Fig.~\bl{(c)}) and
$\Nr_1=12$ (Fig.~\bl{(d)}).}
%$\Nr_1=6$ (Fig. \ref{fig:error_Hamiltonian_V1D1V_6}),
%$\Nr_1=8$ (Fig. \ref{fig:error_Hamiltonian_V1D1V_8}),
%$\Nr_1=10$ (Fig. \ref{fig:error_Hamiltonian_V1D1V_10}) and
%$\Nr_1=12$ (Fig. \ref{fig:error_Hamiltonian_V1D1V_12}).}
\label{fig:Hamiltonian_error_Vlasov}
\end{figure}
In Figure \ref{fig:error_final_V1D1V}, we compare the error \eqref{eqn:error_metric}
and the runtime of the global reduced model, the dynamical models for different values of $r$, and the high-fidelity model. For the global reduced method, we consider the complex SVD approach with and without the tensorial representation of the RHS and of its Jacobian, \emph{cf.} \Cref{sec:nonlinear}.
The results show that, as we increase the dimension of the reduced basis, the runtime cost of the global reduce model becomes larger than the one required to solve the high-fidelity problem, i.e., a global reduction proves ineffective.
Both the non-adaptive and the adaptive dynamical reduced approach outperforms the global reduced method by reaching comparable levels of accuracy at a much lower computational cost. 
In the adaptive algorithm, the additional computational cost associated with the evaluation of the error indicator and the evolution of a larger basis is balanced by a considerable error reduction.
\begin{figure}[H]
\centering
\begin{tikzpicture}[spy using outlines={rectangle, width=6.2cm, height=5cm, magnification=2, connect spies}]
    \begin{axis}[xlabel={runtime $\left[s\right]$},
                 ylabel={$E(t)$},
                 axis line style = thick,
                 grid=both,
                 minor tick num=2,
                 grid style = {gray,opacity=0.2},
                 xmode=log,
                 ymode=log,
                 ymax = 0.018, ymin = 0.00001,
                 xmax = 60001,
                 every axis plot/.append style={ultra thick},
                 width = 9cm, height = 6cm,
                 legend style={at={(1.4,1)},anchor=north},
                 legend cell align=left,
                 xlabel style={font=\footnotesize},
                 ylabel style={font=\footnotesize},
                 x tick label style={font=\footnotesize},
                 y tick label style={font=\footnotesize},
                 legend style={font=\tiny}]
        \addplot+[mark=x,color=black,size=2pt,
            every node near coord/.append style={xshift=0.65cm},
            every node near coord/.append style={yshift=-0.2cm},
            nodes near coords, 
            point meta=explicit symbolic,
            every node near coord/.append style={font=\footnotesize}] table[x=Timing,y=Error, meta index=2] 
            {figures/data/V1D1V_Pareto/error_final_local_reduced_model_no_indicator_V1D1V.txt};
        \addplot+[mark=x,color=red,size=2pt] 
        table[x=Timing,y=Error]
            {figures/data/V1D1V_Pareto/error_final_local_reduced_model_indicator_V1D1V_1.2_1.1.txt};   
        \addplot+[mark=x,color=blue,size=2pt] table[x=Timing,y=Error]
            {figures/data/V1D1V_Pareto/error_final_local_reduced_model_indicator_V1D1V_1.3_1.1.txt}; 
        \addplot+[mark=x,color=green,size=2pt] table[x=Timing,y=Error]
            {figures/data/V1D1V_Pareto/error_final_local_reduced_model_indicator_V1D1V_1.5_1.1.txt};   
        \addplot+[mark=x,color=violet,size=2pt,
            every node near coord/.append style={xshift=0.65cm},
            every node near coord/.append style={yshift=-0.2cm},
            nodes near coords, 
            point meta=explicit symbolic,
            every node near coord/.append style={font=\footnotesize}] table[x=Timing,y=Error, meta index=2] 
            {figures/data/V1D1V_Pareto/error_final_global_reduced_model_V1D1V_standard.txt};
        \addplot+[mark=x,color=violet,size=2pt,dashed,
            every node near coord/.append style={xshift=-0.35cm},
            every node near coord/.append style={yshift=-0.55cm},
            nodes near coords, 
            point meta=explicit symbolic,
            every node near coord/.append style={font=\footnotesize}] table[x=Timing,y=Error, meta index=2] 
            {figures/data/V1D1V_Pareto/error_final_global_reduced_model_V1D1V_cubic_expansion.txt}; 
        \addplot+[color=black,dashed] table[x=Timing,y=DummyError]
            {figures/data/V1D1V_Pareto/error_final_full_V1D1V.txt}; 
        \legend{Non adaptive,
                $r=1.2 \quad c=1.1$,
                $r=1.3 \quad c=1.1$,
                $r=1.5 \quad c=1.1$,
                Global,
                Global (Tensorial POD),
                Full model};
        \coordinate (spypoint) at (axis cs:800,0.0025);
        \coordinate (magnifyglass) at (80,0.0019);
    \end{axis}
\end{tikzpicture}
\caption{Vlasov 1D1V: Error \eqref{eqn:error_metric}, at final time, as a function of the runtime for the complex SVD method ({\color{violet}{\rule[.5ex]{1em}{1.2pt}}},{\color{violet}{\rule[.5ex]{0.45em}{1.2pt}}}{\color{white}{\rule[.5ex]{0.1em}{1.2pt}}}{\color{violet}{\rule[.5ex]{0.45em}{1.2pt}}}),  the dynamical RB method ({\color{black}{\rule[.5ex]{1em}{1.2pt}}}) and the adaptive dynamical RB method for different values of the control parameters $r$ and $c$     ({\color{red}{\rule[.5ex]{1em}{1.2pt}}},{\color{blue}{\rule[.5ex]{1em}{1.2pt}}},{\color{green}{\rule[.5ex]{1em}{1.2pt}}},{\color{cyan}{\rule[.5ex]{1em}{1.2pt}}}).
%for the simulation of all the sampled parameters in $\Sprmh$.
For the sake of comparison, we report the timing required by the high-fidelity solver ({\color{black}{\rule[.5ex]{0.4em}{1.2pt}}} {\color{black}{\rule[.5ex]{0.4em}{1.2pt}}}) to compute the numerical solution for all values of the parameter $\prmh\in\Sprmh$.}
\label{fig:error_final_V1D1V}
\end{figure}

In Figures \ref{fig:time_basis_V1D1V} we report the growth of the dimension of the reduced manifold for different initial dimension $\Nr_1$. As for the evolution of the error, we do not notice any significant difference as the parameter $r$ for the adaptive criterion \eqref{eq:ratio_tmp} varies. The increase of the rank of the full model solution, see Figure \ref{fig:singular_values_V1D1V_b}, is reproduced by the adaptive algorithm up to a tolerance of around $\epsilon=10^{-5}$.
%In Figure \ref{fig:time_basis_V1D1V} on the left, we observe that the error associated with the adaptive reduced dynamical model has a smaller slope than the error of the non adaptive method.
\begin{figure}[H]
\centering
\begin{tikzpicture}
    \begin{groupplot}[
      group style={group size=2 by 3,
                   horizontal sep=2cm},
      width=7cm, height=4cm
    ]
    \nextgroupplot[ylabel={$E(t)$},
                   axis line style = thick,
                   grid=both,
                   minor tick num=2,
                   max space between ticks=20,
                   grid style = {gray,opacity=0.2},
                   every axis plot/.append style={ultra thick},
                   xmin=0, xmax=20,
                   ymode=log,
                   xlabel style={font=\footnotesize},
                   ylabel style={font=\footnotesize},
                   x tick label style={font=\footnotesize},
                   y tick label style={font=\footnotesize},
                   legend style={font=\tiny}]
        \addplot[color=black] table[x=Time,y=Error_4] {figures/data/V1D1V_error_time_basis/error_local_reduced_model_no_indicator_V1D1V.txt};
        \addplot[color=red] table[x=Timing,y=Error_1_2] {figures/data/V1D1V_error_time_basis/error_local_reduced_model_indicator_V1D1V_4.txt};
        \addplot[color=blue] table[x=Timing,y=Error_1_3] {figures/data/V1D1V_error_time_basis/error_local_reduced_model_indicator_V1D1V_4.txt};
        \addplot[color=green] table[x=Timing,y=Error_1_5] {figures/data/V1D1V_error_time_basis/error_local_reduced_model_indicator_V1D1V_4.txt};
        \node [text width=1em,anchor=north west] at (rel axis cs: 0.01,1.05) {\subcaption{\label{fig:time_error_V1D1V_4}}};
    \nextgroupplot[ylabel={$\Nr_{\tau}$},
                   axis line style = thick,
                   grid=both,
                   minor tick num=2,
                   max space between ticks=20,
                   grid style = {gray,opacity=0.2},
                   every axis plot/.append style={ultra thick},
                   xmin=0, xmax=20,
                   xlabel style={font=\footnotesize},
                   ylabel style={font=\footnotesize},
                   x tick label style={font=\footnotesize},
                   y tick label style={font=\footnotesize},
                   legend style={font=\tiny}]
        \addplot[color=black] table[x=Time,y=Basis_4] {figures/data/V1D1V_error_time_basis/basis_local_reduced_model_no_indicator_V1D1V.txt};  
        \addplot[color=red] table[x=Timing,y=Basis_1_2] {figures/data/V1D1V_error_time_basis/basis_local_reduced_model_indicator_V1D1V_4.txt};
        \addplot[color=blue] table[x=Timing,y=Basis_1_3] {figures/data/V1D1V_error_time_basis/basis_local_reduced_model_indicator_V1D1V_4.txt};
        \addplot[color=green] table[x=Timing,y=Basis_1_5] {figures/data/V1D1V_error_time_basis/basis_local_reduced_model_indicator_V1D1V_4.txt};
        \node [text width=1em,anchor=north west] at (rel axis cs: 0.01,1.05) {\subcaption{\label{fig:time_basis_V1D1V_4}}};
    \nextgroupplot[ylabel={$E(t)$},
                   axis line style = thick,
                   grid=both,
                   minor tick num=2,
                   max space between ticks=20,
                   grid style = {gray,opacity=0.2},
                   every axis plot/.append style={ultra thick},
                   xmin=0, xmax=20,
                   ymode=log,
                   xlabel style={font=\footnotesize},
                   ylabel style={font=\footnotesize},
                   x tick label style={font=\footnotesize},
                   y tick label style={font=\footnotesize},
                   legend style={font=\tiny}]
        \addplot[color=black] table[x=Time,y=Error_8] {figures/data/V1D1V_error_time_basis/error_local_reduced_model_no_indicator_V1D1V.txt};
        \addplot[color=red] table[x=Timing,y=Error_1_2] {figures/data/V1D1V_error_time_basis/error_local_reduced_model_indicator_V1D1V_8.txt};
        \addplot[color=blue] table[x=Timing,y=Error_1_3] {figures/data/V1D1V_error_time_basis/error_local_reduced_model_indicator_V1D1V_8.txt};
        \addplot[color=green] table[x=Timing,y=Error_1_5] {figures/data/V1D1V_error_time_basis/error_local_reduced_model_indicator_V1D1V_8.txt};
        \node [text width=1em,anchor=north west] at (rel axis cs: 0.01,1.05) {\subcaption{\label{fig:time_error_V1D1V_8}}};
    \nextgroupplot[ylabel={$\Nr_{\tau}$},
                   axis line style = thick,
                   grid=both,
                   minor tick num=2,
                   max space between ticks=20,
                   grid style = {gray,opacity=0.2},
                   every axis plot/.append style={ultra thick},
                   xmin=0, xmax=20,
                   xlabel style={font=\footnotesize},
                   ylabel style={font=\footnotesize},
                   x tick label style={font=\footnotesize},
                   y tick label style={font=\footnotesize},
                   legend style={font=\tiny},
                   legend columns = 2,
                   legend style={nodes={scale=0.83, transform shape}},
                   legend style={at={(axis cs:3.1,8.4)},anchor=south west}]
        \addplot[color=black] table[x=Time,y=Basis_8] {figures/data/V1D1V_error_time_basis/basis_local_reduced_model_no_indicator_V1D1V.txt}; 
        \addplot[color=red] table[x=Timing,y=Basis_1_2] {figures/data/V1D1V_error_time_basis/basis_local_reduced_model_indicator_V1D1V_8.txt};
        \addplot[color=blue] table[x=Timing,y=Basis_1_3] {figures/data/V1D1V_error_time_basis/basis_local_reduced_model_indicator_V1D1V_8.txt};
        \addplot[color=green] table[x=Timing,y=Basis_1_5] {figures/data/V1D1V_error_time_basis/basis_local_reduced_model_indicator_V1D1V_8.txt};
        \node [text width=1em,anchor=north west] at (rel axis cs: 0.01,1.05) {\subcaption{\label{fig:time_basis_V1D1V_8}}};
        \legend{{Non adaptive},
                {$r=1.2, c=1.1$},
                {$r=1.3, c=1.1$},
                {$r=1.5, c=1.1$}}
    \nextgroupplot[xlabel={time $\left [ s \right ]$},
                   ylabel={$E(t)$},
                   axis line style = thick,
                   grid=both,
                   minor tick num=2,
                   max space between ticks=20,
                   grid style = {gray,opacity=0.2},
                   every axis plot/.append style={ultra thick},
                   xmin=0, xmax=20,
                   ymode=log,
                   xlabel style={font=\footnotesize},
                   ylabel style={font=\footnotesize},
                   x tick label style={font=\footnotesize},
                   y tick label style={font=\footnotesize},
                   legend style={font=\tiny}]
        \addplot[color=black] table[x=Time,y=Error_12] {figures/data/V1D1V_error_time_basis/error_local_reduced_model_no_indicator_V1D1V.txt};
        \addplot[color=red] table[x=Timing,y=Error_1_2] {figures/data/V1D1V_error_time_basis/error_local_reduced_model_indicator_V1D1V_12.txt};
        \addplot[color=blue] table[x=Timing,y=Error_1_3] {figures/data/V1D1V_error_time_basis/error_local_reduced_model_indicator_V1D1V_12.txt};
        \addplot[color=green] table[x=Timing,y=Error_1_5] {figures/data/V1D1V_error_time_basis/error_local_reduced_model_indicator_V1D1V_12.txt};
        \node [text width=1em,anchor=north west] at (rel axis cs: 0.01,1.05) {\subcaption{\label{fig:time_error_V1D1V_12}}};
    \nextgroupplot[xlabel={time $\left [ s \right ]$},
                   ylabel={$\Nr_{\tau}$},
                   axis line style = thick,
                   grid=both,
                   minor tick num=2,
                   max space between ticks=20,
                   grid style = {gray,opacity=0.2},
                   every axis plot/.append style={ultra thick},
                   xmin=0, xmax=20,
                   xlabel style={font=\footnotesize},
                   ylabel style={font=\footnotesize},
                   x tick label style={font=\footnotesize},
                   y tick label style={font=\footnotesize},
                   legend style={font=\tiny}]
        \addplot[color=black] table[x=Time,y=Basis_12] {figures/data/V1D1V_error_time_basis/basis_local_reduced_model_no_indicator_V1D1V.txt}; 
        \addplot[color=red] table[x=Timing,y=Basis_1_2] {figures/data/V1D1V_error_time_basis/basis_local_reduced_model_indicator_V1D1V_12.txt};
        \addplot[color=blue] table[x=Timing,y=Basis_1_3] {figures/data/V1D1V_error_time_basis/basis_local_reduced_model_indicator_V1D1V_12.txt};
        \addplot[color=green] table[x=Timing,y=Basis_1_5] {figures/data/V1D1V_error_time_basis/basis_local_reduced_model_indicator_V1D1V_12.txt};
        \node [text width=1em,anchor=north west] at (rel axis cs: 0.01,1.05) {\subcaption{\label{fig:time_basis_V1D1V_12}}};
    \end{groupplot}
\end{tikzpicture}
\caption{Vlasov 1D1V: On the left, we show the evolution of the error $E(t)$ \eqref{eqn:error_metric} for the adaptive and non adaptive dynamical RB methods for different values of the control parameters $r$ and $c$, and for different dimensions $\Nr_1$ of the initial reduced manifold.
On the right, we show the evolution of the dimension of the dynamical reduced basis. We consider the cases
$\Nr_1=4$ (Figs.~\bl{(a)-(b)}),
$\Nr_1=8$ (Figs.~\bl{(c)-(d)}) and
$\Nr_1=12$ (Figs.~\bl{(e)-(f)}).}
%$\Nr_1=4$ (Figs. \ref{fig:time_error_V1D1V_4}, \ref{fig:time_basis_V1D1V_4}),
%$\Nr_1=8$ (Figs. \ref{fig:time_error_V1D1V_8}, \ref{fig:time_basis_V1D1V_8}) and
%$\Nr_1=12$ (Figs. \ref{fig:time_error_V1D1V_12}, \ref{fig:time_basis_V1D1V_12}).}
\label{fig:time_basis_V1D1V}
\end{figure}
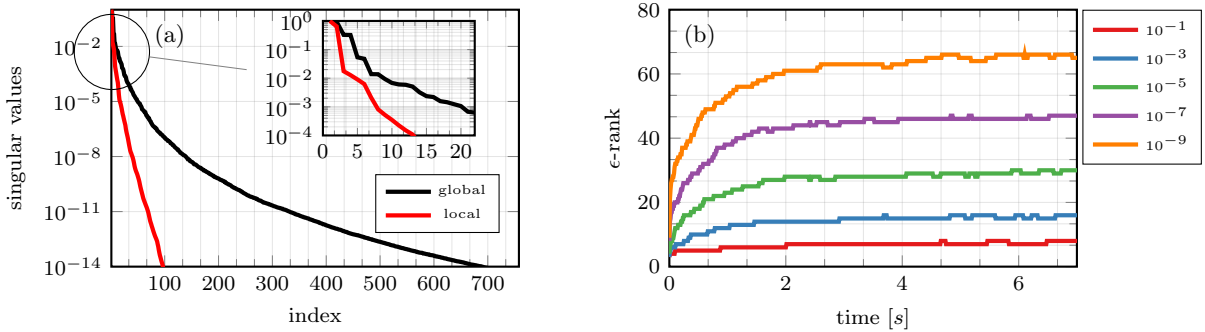

%%%%%%%%%%%%%%%%%%%%%%%%%%%%%%%%%%%%%%%%%%%% CONCLUSIONS
\section{Concluding remarks}\label{sec:conclusions}
We have considered parametrized non-dissipative problems in their canonical symplectic Hamiltonian formulation.
For their model order reduction, we propose a nonlinear structure-preserving reduced basis method
consisting in approximating the problem solution with a modal decomposition
where both the expansion coefficients and
the reduced basis are evolving in time.
%The reduced dynamics is prescribed on the tangent space to the reduced manifold in such a way that the geometric structure of the Hamiltonian flow is retained during the reduction.
Moreover, the dimension of the reduced basis is updated in time according to an adaptive strategy based on an error indicator.
The resulting reduced models allow to achieve stable and accurate results
with small reduced basis even for problems characterized by a slowly decaying Kolmogorov $n$-width.
The strength is the combination of the dynamical adaptivity of the reduced basis and the
preservation of the geometric structure underlying key physical properties of the dynamics, illustrated by examples.

The study of efficient and structure-preserving algorithms for general nonlinear Hamiltonian vector fields and the development of partitioned Runge--Kutta methods that ensure the exact preservation of (at least linear and quadratic) invariants are still open problems and provide interesting directions of investigation.
Moreover, the application of our rank-adaptive reduced basis method to fully kinetic plasma models, like the Vlasov--Poisson problem,
 might also be subject of future studies.

%%%%%%%%%%%%%%%%%%%%%%%%%%%%%%%%%%%%%%%%%%%% ACKNOWLEDGMENT
\begin{acknowledgment*}
This work was partially supported by AFOSR under grant FA9550-17-1-9241.
\end{acknowledgment*}

%%%%%%%%%%%%%%%%%%%%%%%%%%%%%%%%%%%%%%%%%%%% BIBLIOGRAPHY
\printbibliography

%%%%%%%%%%%%%%%%%%%%%%%%%%%%%%%%%%%%%%%%%%%% APPENDIX
\appendix

\section{Second and third order partitioned RK methods}\label{app:pRK}
To devise second order accurate partitioned RK schemes
we combine a $2$-stage explicit RK scheme of second order,
known as the modified Euler method (or explicit midpoint method),
with the implicit midpoint rule enlarged by means of a fictitious stage.
%as shown in \Cref{table:pRK2s2}.
%
The following result follows directly from \Cref{lemma:pRKcond}.
\begin{lemma}\label{lem:2_stage_PRK}
Let a $2$-stage partitioned Runge--Kutta method be characterized by the set of coefficients
$P_Z=(\{b_i\}_{i=1}^{\Ns},\{a_{ij}\}_{i,j=1}^{\Ns})$
and
$\widehat{P}_U=(\{\widehat{b}_i\}_{i=1}^{\Ns},\{\widehat{a}_{ij}\}_{1\leq j<i\leq \Ns})$, where $P_Z$ is the implicit midpoint rule
and $\widehat{P}_U$ the explicit midpoint method, namely the non-zero coefficients have values
$b_2=\widehat{b}_2 = 1$, $a_{22}=\widehat{a}_{21} = 1/2$.
The resulting partitioned RK method has order of accuracy $2$ and the numerical integrator $P_Z$ is symplectic.
\end{lemma}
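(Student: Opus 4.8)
The plan is to verify the claim by checking the three families of conditions from \Cref{lemma:pRKcond} against the explicit coefficient values given for the two constituent methods. First I would record the data: for $P_Z$ (implicit midpoint with a fictitious stage) the only nonzero entries are $b_2=1$, $a_{22}=1/2$, while $b_1=0$, $a_{11}=a_{12}=a_{21}=0$; and for $\widehat{P}_U$ (explicit midpoint) the only nonzero entries are $\widehat b_2=1$, $\widehat a_{21}=1/2$, with $\widehat b_1=0$. With $\Ns=2$ these are finite sums, so every condition reduces to evaluating a handful of terms.

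Next I would check symplecticity of $P_Z$ via \eqref{eq:symCond}: the condition $b_ia_{ij}+b_ja_{ji}=b_ib_j$ must hold for all $i,j\in\{1,2\}$. The only potentially nonzero case is $i=j=2$, where $b_2a_{22}+b_2a_{22}=2\cdot 1\cdot\tfrac12=1=b_2^2$; all other index pairs give $0=0$ because $b_1=0$. Then I would verify the second-order conditions: the order-$k$ condition \eqref{eq:RKp2} for $P_Z$ requires $\sum_i\widehat b_i=1$ (here $\widehat b_2=1$, done — though note as written \eqref{eq:RKp2} seems to mix $\widehat b_i$ and $a_{ij}$; I would read it as the standard pair of conditions $\sum_i b_i=1$ and $\sum_{i,j}b_ia_{ij}=\tfrac12$, which give $b_2=1$ and $b_2a_{22}=\tfrac12$, both satisfied) and the analogous conditions for $\widehat P_U$, namely $\sum_i\widehat b_i=1$ and $\sum_{i,j}\widehat b_i\widehat a_{ij}=\widehat b_2\widehat a_{21}=\tfrac12$. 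Finally I would check the coupling conditions \eqref{eq:pRKp2}: $\sum_{i}\sum_{j<i}b_i\widehat a_{ij}=b_2\widehat a_{21}=\tfrac12$ and $\sum_{i,j}\widehat b_i a_{ij}=\widehat b_2 a_{22}=\tfrac12$, both of which hold. Assembling these, \Cref{lemma:pRKcond} yields that the partitioned method has order $2$ and that $P_Z$ is symplectic.

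The only mild subtlety — and the place I would be most careful — is the bookkeeping around the ``fictitious stage'': the implicit midpoint rule is genuinely a one-stage scheme, and here it is padded to two stages by adding a trivial first stage with zero weight. I would confirm that this padding does not spoil the symplecticity condition (it does not, since $b_1=0$ kills every index pair involving stage $1$) and that the stage-$1$ internal abscissa being zero is consistent with the explicit method's first stage. This is the part where an off-by-one in the index conventions of \eqref{eq:PRK-ZVh} could creep in, so I would double-check that the $\widehat k_i$ recursion with $\widehat a_{21}=1/2$ indeed reproduces the explicit midpoint step $V_\tau = \dt\,\widehat k_2$ with $\widehat k_2$ evaluated at the half-step. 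Everything else is a direct substitution, so I expect no real obstacle; the proof is essentially a one-line invocation of \Cref{lemma:pRKcond} once the coefficients are written down.
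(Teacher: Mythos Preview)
Your proposal is correct and is exactly what the paper intends: the paper's own justification is simply that the result ``follows directly from \Cref{lemma:pRKcond}'', and you have spelled out that direct verification. Your observation about the apparent typo in \eqref{eq:RKp2} (the $\widehat{b}_i$ where $b_i$ is meant) and your handling of the fictitious stage are both appropriate.
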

To derive a partitioned Runge--Kutta method of order 3,
we take $P_Z$ to be the $2$-stage Gauss--Legendre (GL)
method of order $4$ enlarged with a fictitious stage.
Starting from the enlarged $4$-stage GL scheme
in \Cref{table:pRK3s3} (left),
we derive an explicit RK method of order $3$
by imposing the conditions \eqref{eq:RKp3} and \eqref{eq:pRKp3}.
The resulting scheme is described by the Butcher tableau in
\Cref{table:pRK3s3} on the right. By construction and in view of
\Cref{lemma:pRKcond}, the following result holds.
\begin{lemma}
The $3$-stage partitioned Runge--Kutta method characterized by the set of coefficients
$P_Z$ and $\widehat{P}_U$ in \Cref{table:pRK3s3}
has order of accuracy $3$ and the numerical integrator $P_Z$ is symplectic.
\end{lemma}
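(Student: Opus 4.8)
The plan is to deduce everything from \Cref{lemma:pRKcond} by checking, for the explicit coefficient arrays $P_Z$ and $\widehat{P}_U$ tabulated in \Cref{table:pRK3s3}, the three families of hypotheses of that lemma: the symplecticity relation \eqref{eq:symCond}, the order conditions \eqref{eq:RKp2}--\eqref{eq:RKp3} for $P_Z$ and for $\widehat{P}_U$, and the coupling conditions \eqref{eq:pRKp2}--\eqref{eq:pRKp3}. The first observation is that $P_Z$ is the $2$-stage Gauss--Legendre method of order $4$ with an appended \emph{fictitious} stage, i.e.\ a stage whose quadrature weight $b_i$ vanishes and which is present only to feed an intermediate argument into $\widehat{P}_U$. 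Consequently the $(a_{ij})$ block restricted to the two genuine stages is exactly that of Gauss--Legendre, and the zero weight of the extra stage contributes nothing to any weighted sum $\sum b_i a_{ij}$, $\sum b_i(\sum_j a_{ij})^2$, $\sum b_i a_{ij} a_{j\ell}$, nor to $\sum b_i$. Hence \eqref{eq:symCond} holds (it holds for Gauss--Legendre, and on the new row/column either $b_i=0$ or $b_j=0$, so both sides vanish), and the order conditions for $P_Z$ hold because they are inherited verbatim from the order-$4$ Gauss--Legendre scheme. In particular $P_Z$ is symplectic and of order $\geq 3$.

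Next I would verify that $\widehat{P}_U$ has classical order $3$. Since this explicit $3$-stage array was \emph{constructed} precisely by imposing \eqref{eq:RKp3} together with the coupling relations \eqref{eq:pRKp3}, the verification is a finite substitution of the entries $\widehat{b}_i$, $\widehat{a}_{ij}$ of the right tableau of \Cref{table:pRK3s3} into the four scalar identities of \eqref{eq:RKp3} and a check that each evaluates to the prescribed rational value. This certifies the order-$3$ hypothesis for $\widehat{P}_U$ required by \Cref{lemma:pRKcond}.

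Then I would check the coupling conditions. The order-$2$ coupling relations \eqref{eq:pRKp2} and the order-$3$ coupling relations \eqref{eq:pRKp3}, including the stage-consistency identity $\sum_i a_{ij} = \sum_{i=1}^{j-1}\widehat{a}_{ij}$ built into the derivation of $\widehat{P}_U$ from the enlarged Gauss--Legendre array, are once more finite rational identities in the tableau entries; here one must keep the fictitious stage inside the sums, since it carries a nonzero $\widehat{a}_{ij}$ even though its $b$-weight is zero, and this is exactly the device that allows so few stages to meet the order-$3$ coupling constraints. Having verified all hypotheses, \Cref{lemma:pRKcond} gives that the partitioned method $(P_Z,\widehat{P}_U)$ has order $3$, while \eqref{eq:symCond}, checked above, gives that the $Z$-integrator $P_Z$ is symplectic.

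The step I expect to be most delicate is the consistent bookkeeping of the fictitious stage across the two tableaux: one must track which sums range only over the genuine Gauss--Legendre stages and which must include the appended stage, and align the index conventions for the strictly lower-triangular array $(\widehat{a}_{ij})_{j<i}$ with those for the full array $(a_{ij})$ in the stage-consistency identity. Once that accounting is pinned down, the remaining content is elementary arithmetic with the explicit coefficients of \Cref{table:pRK3s3}.
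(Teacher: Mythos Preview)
Your proposal is correct and follows the same approach as the paper: both invoke \Cref{lemma:pRKcond} after noting that $P_Z$ is the order-$4$ Gauss--Legendre scheme augmented by a zero-weight stage (so symplecticity and the order conditions are inherited) and that $\widehat{P}_U$ was built precisely to satisfy \eqref{eq:RKp3} and \eqref{eq:pRKp3}. The paper's argument is in fact the one-line ``by construction and in view of \Cref{lemma:pRKcond}''; you simply unpack that construction, and your only imprecision is cosmetic---the fictitious stage is the \emph{first} stage in \Cref{table:pRK3s3}, not an appended last one, and your symplecticity check on the new row/column also tacitly uses that the first column of $(a_{ij})$ vanishes, not just $b_1=0$.
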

\begin{table}[ht!]\setlength\belowcaptionskip{-10pt}
\begin{center}
\footnotesize
\begin{tabular}{c|c c c }
 0 & 0 & 0 & 0 \\ 
 $\frac12-\frac{\sqrt{3}}{6}$ & 0 & $\frac14$ & $\frac14-\frac{\sqrt{3}}{6}$ \\ 
 $\frac12+\frac{\sqrt{3}}{6}$ & 0 & $\frac14+\frac{\sqrt{3}}{6} $ & $\frac14$ \\[1ex]
 \hline
 & 0 & $\frac12$ & $\frac12$
\end{tabular}
\qquad
\begin{tabular}{c|c c c }
 0 & 0 & 0 & 0 \\ 
 $\frac12-\frac{\sqrt{3}}{6}$ & $\frac12-\frac{\sqrt{3}}{6}$ & 0 & 0 \\ 
 $\frac12+\frac{\sqrt{3}}{6}$ & -$\frac{1}{3-\sqrt{3}}$ & $\frac{2}{3-\sqrt{3}}$ & 0 \\[1ex]
 \hline
 & 0 & $\frac12$ & $\frac12$
\end{tabular}
\caption{Butcher tableau for the Gauss--Legendre scheme of order $4$, on the left, and for the explicit $3$-stage Runge--Kutta method of order $3$,
on the right.}
\label{table:pRK3s3}
\end{center}
\end{table}

We construct a partitioned RK scheme of order $3$ with a larger region of absolute stability
by including a further stage.
This can be obtained by coupling the Gauss--Legendre scheme
of order $6$, suitably enlarged with a fictitious stage, to an explicit RK method, as
in \Cref{table:pRK3s4}.
\begin{table}[ht!]\setlength\belowcaptionskip{-10pt}
\begin{center}
\footnotesize
\begin{tabular}{c|c c c c}
 0 & 0 & 0 & 0 & 0 \\ 
 $\frac12-\frac{\sqrt{15}}{10}$ & 0 & $\frac{5}{36}$ & $\frac29-\frac{\sqrt{15}}{15}$ & $\frac{5}{36}-\frac{\sqrt{15}}{30}$ \\ 
 $\frac12$ & 0 & $\frac{5}{36}+\frac{\sqrt{15}}{24}$ & $\frac29$ & $\frac{5}{36}-\frac{\sqrt{15}}{24}$ \\ 
 $\frac12+\frac{\sqrt{15}}{10}$ & 0 & $\frac{5}{36}+\frac{\sqrt{15}}{30}$ & $\frac29+\frac{\sqrt{15}}{15}$ & $\frac{5}{36}$ \\[1ex]
 \hline
 & 0 & $\frac{5}{18}$ & $\frac{4}{9}$ & $\frac{5}{18}$
\end{tabular}
\qquad
\begin{tabular}{c|c c c c}
 0 & 0 & 0 & 0 & 0 \\ 
 $\frac12-\frac{\sqrt{15}}{10}$ & $\frac12-\frac{\sqrt{15}}{10}$ & 0 & 0 & 0 \\ 
 $\frac12$ & $\widehat{a}_{31}$ & $\widehat{a}_{32}$ & 0 & 0 \\ 
 $\frac12+\frac{\sqrt{15}}{10}$ & $\widehat{a}_{41}$ & $\widehat{a}_{42}$ & $\widehat{a}_{43}$ & 0 \\[1ex]
 \hline
 & $\widehat{b}_1$ & $\widehat{b}_2$ & $\widehat{b}_3$ & $\widehat{b}_4$
\end{tabular}
\caption{Butcher tableau for the Gauss--Legendre scheme of order $6$, on the left and explicit $4$-stage Runge--Kutta method of order $3$, on the right.}
\label{table:pRK3s4}
\end{center}
\end{table}

The nine unknown coefficients of the explicit third order scheme in \Cref{table:pRK3s4} are obtained by solving the underdetermined system
derived by imposing the eight order conditions
\eqref{eq:RKp3} and \eqref{eq:pRKp3}.
A further equation can be imposed by adding a constraint on the
region of absolute stability of the scheme: this is given by
$\{z\in\mathbb{C}:\;|R(z)|<1\}$ where the stability function is
$R(z) = 1+ z+ z^2/2 + z^3/6 + K z^4$, with
$K:= \widehat{b}_4\,\widehat{a}_{43}\,\widehat{a}_{32}\,\widehat{a}_{21}$.

\end{document}